\newtheorem{theorem}{Theorem}[section]
\newtheorem{corollary}[theorem]{Corollary} 
\newtheorem{lemma}[theorem]{Lemma}
\newtheorem{proposition}[theorem]{Proposition}
\theoremstyle{definition}
\newtheorem{definition}[theorem]{Definition}
\newtheorem{remark}[theorem]{Remark}
\newtheorem{problem}[theorem]{Problem}
\newtheorem{algorithm}[theorem]{Algorithm}
\newcommand{\kar} {{\rm char}}
\newcommand{\N}{{\mathbb N}}
\newcommand{\Q}{{\mathbb Q}}
\newcommand{\Z}{{\mathbb Z}}
\newcommand{\Tr}{\operatorname{Tr}}
\newcommand{\trace}{\operatorname{Tr}}
\newcommand{\Mat}{\operatorname{Mat}}
\newcommand{\GL}{\operatorname{GL}}
\newcommand{\SL}{\operatorname{SL}}
\newcommand{\sep}{\operatorname{sep}}
\newcommand{\Set}{{\mathcal S}}
\newcommand{\Ad}{\operatorname{Adj}}
\title{Algorithms for orbit closure separation for invariants and semi-invariants of matrices}
\author{Harm Derksen and Visu Makam}
\thanks{The authors were supported by NSF grants DMS-1601229, DMS-1638352 and CCF-1412958.}
\keywords{orbit closure intersection, null cone, matrix semi-invariants, matrix invariants, separating invariants}
\begin{document}

\maketitle
\begin{abstract}
We consider two group actions on $m$-tuples of $n \times n$ matrices with entries in the field~$K$. The first is simultaneous conjugation by $\GL_n$ and the second is the left-right action of $\SL_n \times \SL_n$. Let $\overline{K}$ be the algebraic closure of the field $K$. Recently, a polynomial time algorithm was found to decide whether $0$ lies in the Zariski closure of the $\SL_n(\overline{K})\times \SL_n(\overline{K})$-orbit of a given $m$-tuple by Garg-Gurvits-Oliveira-Wigderson for the base field $K=\Q$. An algorithm that also works for finite fields of large enough cardinality was given by Ivanyos-Qiao-Subrahmanyam. A more general problem is the {\em orbit closure separation problem} that asks whether the orbit closures of two given $m$-tuples intersect.
For the conjugation action of $\GL_n(\overline{K})$ a polynomial time algorithm for orbit closure separation was given by Forbes and Shpilka in characteristic $0$. 
Here, we give a polynomial time algorithm for the orbit closure separation problem
for both the conjugation action of  $\GL_n(\overline{K})$  and the left-right action of $\SL_n(\overline{K})\times \SL_n(\overline{K})$ in arbitrary characteristic.
  We also improve the known bounds for the degree of separating invariants in these cases.
\end{abstract}

\tableofcontents

\section{Introduction} 
The algorithms we present will only use numbers from the field of definition, as opposed to its algebraic closure (see Section~\ref{not.alg.closed}). However, it will be convenient to assume that the field of definition is algebraically closed for stating and proving results. 

In this paper, let $K$ denote an algebraically closed field. For a vector space $V$ over the field $K$, let $K[V]$ denote the ring of polynomial functions on $V$. 
Suppose that a group $G$ acts on $V$ by linear transformations.
 A polynomial $f \in K[V]$ is called an {\it invariant polynomial}  if it is constant along orbits, i.e., $f(g \cdot v) = f(v)$ for all $g \in G$ and $v\in V$. The invariant polynomials form a graded subalgebra $K[V]^G = \bigoplus_{d=0}^\infty K[V]^G_d,$ where $K[V]^G_d$ denotes the degree $d$ homogeneous invariants. We will call $K[V]^G$ the {\it invariant ring} or the {\it ring of invariants}.

For a point $v \in V$, its orbit $G \cdot v = \{g \cdot v\  |\  g \in G\}$ is not necessarily closed with respect to the Zariski topology. We say that an invariant $f$ separates two points $v,w\in V$ if $f(v)\neq f(w)$. It follows from continuity that any invariant polynomial must take the same value on all points of the closure of an orbit. Hence invariant polynomials cannot separate two points whose orbit closures intersect. 

We can ask the converse question: if $v,w \in V$ such that $\overline{G \cdot v} \cap \overline{G \cdot w} = \emptyset$, then is there an invariant polynomial $f \in K[V]^G$ such that $f(v) \neq f(w)$? The answer to this question is in general negative (see \cite[Example 2.2.8]{DK}).
However, if we enforce additional hypothesis, we get a positive answer as the theorem below shows (see \cite{GIT}).

\begin{theorem} \label{first}
Let $V$ be a rational representation of a reductive group $G$. Then for $v,w \in V$, there exists $f \in K[V]^G$ such that $f(v) \neq f(w)$ if and only if $\overline{G \cdot v} \cap \overline{G \cdot w} = \emptyset$. 
\end{theorem}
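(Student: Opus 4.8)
The plan is to prove the substantive implication; the converse was already observed above, since an invariant is constant on orbit closures, so a common point of $\overline{G\cdot v}$ and $\overline{G\cdot w}$ would force $f(v)=f(w)$ for every $f\in K[V]^G$.

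Assume then $\overline{G\cdot v}\cap\overline{G\cdot w}=\emptyset$, and set $Z_1=\overline{G\cdot v}$, $Z_2=\overline{G\cdot w}$, which are disjoint, closed, $G$-stable subsets of $V$. Their vanishing ideals $I(Z_1),I(Z_2)\subseteq K[V]$ are $G$-stable, and since $K$ is algebraically closed and $Z_1\cap Z_2=\emptyset$, the Nullstellensatz gives $I(Z_1)+I(Z_2)=K[V]$. Hence I can pick $f_0\in I(Z_1)$ with $1-f_0\in I(Z_2)$; that is, $f_0$ vanishes on $Z_1$ and is identically $1$ on $Z_2$. It then suffices to upgrade $f_0$ to a $G$-invariant $f$ with the same two properties, for then $f(v)=0\neq 1=f(w)$.

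In characteristic $0$ this upgrade is supplied by the Reynolds operator $\mathcal R\colon K[V]\to K[V]^G$, the canonical $G$-equivariant $K[V]^G$-linear projection onto invariants: since each $I(Z_j)$ is a $G$-submodule, $\mathcal R$ carries it into $I(Z_j)\cap K[V]^G$, so applying $\mathcal R$ to $f_0+(1-f_0)=1$ shows that $f:=\mathcal R(f_0)$ is invariant, vanishes on $Z_1$, and equals $1$ on $Z_2$. In arbitrary characteristic the linear Reynolds operator need not exist, and here the actual hypothesis on $G$ enters: by Haboush's theorem a reductive group is geometrically reductive. Let $M\subseteq K[V]$ be the span of the $G$-orbit of $f_0$, which is finite-dimensional because $G$ acts rationally on $K[V]$; then $M\subseteq I(Z_1)$, while restriction to the $G$-stable set $Z_2$ sends every $G$-translate of $f_0$ to the constant $1$, giving a surjection of $G$-modules $\phi\colon M\twoheadrightarrow K$ onto the trivial module. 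Geometric reductivity then yields an integer $d\geq 1$ and an invariant $u\in(S^dM)^G$ with $(S^d\phi)(u)\neq 0$; pushing $u$ through the multiplication map $S^dM\to K[V]$ produces $f\in K[V]^G$ which lies in $I(Z_1)$ (being a sum of $d$-fold products of elements of $M\subseteq I(Z_1)$) and restricts on $Z_2$ to the nonzero scalar $(S^d\phi)(u)$, since restriction to $Z_2$ is a ring homomorphism carrying $M$ onto $K\cdot 1$. After rescaling, $f$ separates $v$ from $w$.

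The only genuine obstacle is the positive-characteristic case, and it is concentrated entirely in one external input: that reductive implies geometrically reductive (Haboush's theorem, formerly the Mumford conjecture), which is a deep result. Granting it, the scheme above — use the Nullstellensatz to separate the two closed $G$-stable sets by an ordinary function, then average that function into an invariant — is formal.
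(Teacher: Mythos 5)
The paper does not prove this statement; it cites \cite{GIT} and treats it as known. Your argument is a correct, self-contained version of the standard proof: the Nullstellensatz applied to the two disjoint closed $G$-stable sets $\overline{G\cdot v}$ and $\overline{G\cdot w}$ gives a function $f_0$ that is $0$ on the first and $1$ on the second, and you then promote $f_0$ to an invariant with the same separating property, via the Reynolds operator in characteristic~$0$ and via Haboush's geometric reductivity (applied to the finite-dimensional $G$-module generated by $f_0$, whose restriction to $\overline{G\cdot w}$ is a $G$-equivariant surjection onto the trivial module) in general. This is precisely the argument found in the cited reference, so no discrepancy to report.
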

Henceforth, we shall assume that $V$ is a rational representation of a reductive group $G$. 
\begin{problem}[orbit closure problem]
Decide whether the orbit closures of two given points $v,w \in V$ intersect.
\end{problem}


\begin{definition}
Two points $v,w \in V$ are said to be {\it closure equivalent} if $\overline{G \cdot v} \cap \overline{G \cdot w} \neq \emptyset$. We write $v \sim w$ if $v$ and $w$ are closure equivalent, and we write $v \not\sim w$ if they are not closure equivalent.
\end{definition} 
By Theorem~\ref{first}, we have $v\sim w$ if and only if $f(v)=f(w)$ for all $f\in K[V]^G$. So $\sim$ is clearly an equivalence relation.
Since closure equivalence can be detected by invariant polynomials, the existence of a small generating set of invariants, each of which can be computed efficiently would give an algorithm for the orbit closure problem. Fortunately, the invariant ring $K[V]^G$ is finitely generated (see \cite{Haboush,Hilbert1,Hilbert2,Nagata}).  

\begin{definition} \label{beta-gen}
We define $\beta(K[V]^G)$ to be the smallest integer $D$ such that invariants of degree $\leq D$ generate $K[V]^G$, i.e., 
$$
\beta(K[V]^G) =\textstyle \min\{D \in \N\ |\ \bigcup_{d=1}^D K[V]^G_{d} \text{ generates } K[V]^G\},
$$
where $\N = \{1,2,\dots\}$.
\end{definition}

We are not just interested in deciding whether orbit closures intersect -- when they do not, we want to provide an explicit invariant that separates them. To be able to do this efficiently, there must exist an invariant of small enough degree that separates the two given points. A strong upper bound on $\beta(K[V]^G)$ would provide evidence that such invariants exist. Such a bound can be obtained for any rational representation $V$ of a linearly reductive group $G$ (see~\cite{Derksen}), but this is often too large. For the cases of interest to us, stronger bounds exist, and we recall them in Section~\ref{preliminaries}. Despite having strong degree bounds, it is a difficult problem to extract a small set of generators. On the other hand, we may only need a subset of the invariants to detect closure equivalence, prompting the definition of a separating set of invariants.

\begin{definition}
A subset of invariants  $\Set \subset K[V]^G$ is called a {\it separating set} of invariants if for every pair $v,w \in V$ such that $v \not\sim w$, there exists $f \in \Set$ such that $f(v) \neq f(w)$.  
\end{definition}

We make another definition.

\begin{definition}
We define $\beta_{\sep} (K[V]^G)$ to be the smallest integer $D$ such that the invariants of degree $\leq D$ form a separating set of invariants, i.e.,
$$
\beta_{\sep} (K[V]^G) =\textstyle  \min \{D \in \N \ |\ \bigcup_{d=1}^D K[V]^G_{d} \text{ is a separating set of invariants}\}.
$$

\end{definition} 

Extracting a small set of separating invariants is also difficult (see \cite{Kemper} for a general algorithm). We now turn to a closely related problem, and to describe this we need to recall the null cone.

\begin{definition}
The null cone $\mathcal{N}(G,V) = \{v \in V\ |\ 0 \in \overline{G \cdot v}\}$.
\end{definition}
For a set of polynomials $I \subset K[V]$ we define its vanishing set 
$$
\mathbb{V}(I) = \{v \in V\  |\  f(v) = 0 \ \text{ for all } f \in I\}.
$$
The null cone can also be defined by $\mathcal{N}(G,V) = \mathbb{V}( K[V]^G_+)$, where  $K[V]^G_+ = \bigoplus_{d=1}^\infty K[V]^G_d$
(see \cite[Definition 2.4.1, Lemma 2.4.2]{DK}).





\begin{problem}[null cone membership problem]
Decide whether a given point $v \in V$ lies in the null cone $\mathcal{N}(G,V)$.
\end{problem}
 Since $0$ is a closed orbit, a point $v \in V$ is in the null cone if and only if $0 \sim v$, and hence the null cone membership problem can be seen as a subproblem of the orbit closure problem. So, the null cone membership problem could potentially be easier than the orbit closure problem. On the other hand, an algorithm for the null cone membership problem may provide a stepping stone for the orbit closure problem. 

In this paper, we are interested in giving efficient algorithms for the orbit closure problem in two specific cases -- matrix invariants and matrix semi-invariants. These two cases have generated considerable interest over the past few years due to their connections to computational complexity, see \cite{GCT, FS13, DM, IQS, IQS2, GGOW, HW}. 

\begin{remark}
For analyzing the run time of our algorithms, we will use the unit cost arithmetic model. This is also often referred to as algebraic complexity. 
\end{remark}

\subsection{Matrix invariants}
Let $\Mat_{p,q}$ be the set of $p\times q$ matrices.  The group $\GL_n$ acts by simultaneous conjugation on the space $V=\Mat_{n,n}^m$ of $m$-tuples of $n\times n$ matrices. This action is given by 
$$
g \cdot (X_1,X_2,\dots,X_m) = (gX_1g^{-1},gX_2g^{-1},\dots,gX_mg^{-1}).
$$

We set $S(n,m) = K[V]^G$. The ring $S(n,m)$ is often referred to as the ring of matrix invariants. We will write $\sim_C$ for
the orbit closure equivalence relation $\sim$ with respect to this simultaneous conjugation action.

\subsubsection{Representation theoretic view point} \label{rep.view}
Orbit closure intersection for matrix invariants has an interpretation in terms of finite dimensional representations of the free algebra. Consider the free algebra $F_m = K \left< t_1,\dots,t_m \right>$ on $m$ indeterminates. An $m$-tuple of matrices $X = (X_1,\dots,X_m)$ gives an $n$-dimensional representation, i.e., an action of $F_m$ on $K^n$ where $t_i$ acts via $X_i$. We will denote this representation by $V_X$. Two $m$-tuples $X$ and $Y$ are in the same $\GL_n$ orbit if and only if $V_X$ and $V_Y$ are isomorphic representations of $F_m$. In other words, we have a correspondence between orbits and isomorphism classes of $n$-dimensional representations of $F_m$. 

Finite dimensional representations of $F_m$ form an abelian category. A representation is called semisimple if it is a direct sum of simple representations. A composition series of a representation $V$ is a filtration $0 = V_0 \subseteq V_1 \subseteq \dots \subseteq V_l = V$ whose successive quotients $V_i/V_{i-1}$ are simple. These simple subquotients are called composition factors and are independent of the choice of composition series. For the representation $V$, the direct sum $\oplus_{i = 1}^l (V_i / V_{i-1})$ is called the associated semisimple representation of $V$. The following statements follow from \cite{Artin}:

\begin{proposition} [\cite{Artin}] \label{Artin}
Consider the simultaneous conjugation action of $G = \GL_n$ on $\Mat_{n,n}^m$, and let $X,Y \in \Mat_{n,n}$. 
\begin{enumerate}
\item The orbit of $X$ is closed if and only if the representation $V_X$ is semisimple. In other words, we have a correspondence between closed orbits and semisimple representations of dimension $n$.
\item There is a unique closed orbit in the orbit closure of $X$, and the representation corresponding to this unique closed orbit is the associated semisimple representation of $V_X$.
\item The orbit closures of $X$ and $Y$ intersect if and only if the associated semisimple representations of $V_X$ and $V_Y$ are isomorphic.
\end{enumerate}
\end{proposition}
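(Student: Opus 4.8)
The plan is to reduce parts (2) and (3) to two standard facts of geometric invariant theory valid for any reductive group $G$ in arbitrary characteristic: (a) every nonempty closed $G$-stable subset of $V$ contains a closed orbit --- take an orbit inside it whose closure has minimal dimension; and (b) $\overline{G\cdot X}$ contains a \emph{unique} closed orbit --- if $G\cdot Z_1$ and $G\cdot Z_2$ were distinct closed orbits inside $\overline{G\cdot X}$, they would be disjoint closed $G$-stable sets, hence separated by some $f\in K[V]^G$ via Theorem~\ref{first}, contradicting that $f$ is constant on $\overline{G\cdot X}\supseteq G\cdot Z_1\cup G\cdot Z_2$. Granting (a) and (b), part (3) is formal: if $\overline{G\cdot X}\cap\overline{G\cdot Y}\neq\emptyset$ then this closed $G$-stable set contains a closed orbit, which by (b) is the unique closed orbit of each of $\overline{G\cdot X}$ and $\overline{G\cdot Y}$, so these coincide; the converse is immediate once (2) identifies that orbit. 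So the real content is part (1), together with the identification, in (2), of the unique closed orbit in $\overline{G\cdot X}$ as the one corresponding to the associated semisimple representation $\mathrm{gr}(V_X)$.

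For the implication ``closed orbit $\Rightarrow$ semisimple'' in (1), and for the existence half of (2), I would first show that $\mathrm{gr}(V_X)$ always lies in $\overline{G\cdot X}$. Given a subrepresentation $0\neq W\subsetneq K^n$, extend a basis of $W$ to a basis of $K^n$; each $X_k$ becomes block upper-triangular with diagonal blocks representing $V_W$ and $V_{K^n/W}$, and conjugating by the one-parameter subgroup $\lambda(t)=\mathrm{diag}(tI,I)$ (the scalar block of size $\dim W$) scales the off-diagonal block by $t$, so $\lim_{t\to 0}\lambda(t)\cdot X$ exists and equals the block-diagonal tuple representing $V_W\oplus V_{K^n/W}$. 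Iterating down a composition series of $V_X$ shows $\mathrm{gr}(V_X)\in\overline{G\cdot X}$. If $V_X$ is not semisimple then $\mathrm{gr}(V_X)\not\cong V_X$ (a module isomorphic to its semisimplification is semisimple), so $\overline{G\cdot X}$ meets an orbit other than $G\cdot X$ and $G\cdot X$ is not closed.

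The crux, which I expect to be the main obstacle, is the converse: if $V_X$ is semisimple then $G\cdot X$ is closed. Here I would invoke the Hilbert--Mumford criterion in the form --- valid for reductive $G$ in all characteristics, cf.\ \cite{GIT} --- that the unique closed orbit in $\overline{G\cdot X}$ has a representative $X_0=\lim_{t\to 0}\lambda(t)\cdot X$ for some one-parameter subgroup $\lambda$. Such a $\lambda$ grades $K^n=\bigoplus_{j\in\Z}K^n_{(j)}$ into weight spaces, and the existence of the limit forces each $X_k$ to preserve the descending filtration $F_{\geq c}=\bigoplus_{j\geq c}K^n_{(j)}$, so this is a filtration of $V_X$ by subrepresentations, with $X_0$ the induced action on the associated graded $\bigoplus_c F_{\geq c}/F_{\geq c+1}$. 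Since in a semisimple module every subrepresentation is a direct summand, an easy induction on the length of the filtration gives $\mathrm{gr}_F(V_X)\cong V_X$, whence $X_0\in G\cdot X$ and $G\cdot X$ is closed. Finally, for (2): $\mathrm{gr}(V_X)$ is semisimple, so its orbit is closed by (1) and lies in $\overline{G\cdot X}$ by the previous paragraph, hence by uniqueness it is the closed orbit of $\overline{G\cdot X}$. The two delicate points will be pinning down the correct characteristic-free form of Hilbert--Mumford and making precise the dictionary between one-parameter-subgroup limits and subrepresentation filtrations; the rest is bookkeeping with composition series.
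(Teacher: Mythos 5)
The paper offers no proof of this proposition; it simply cites Artin's 1969 paper \cite{Artin}. Your argument is therefore filling in a reference rather than replicating the paper's own reasoning, and as a self-contained proof it is correct. You follow the standard modern GIT route: (a) and (b) are the textbook facts that any closed $G$-stable set contains a closed orbit and that $\overline{G\cdot v}$ contains exactly one such orbit; the one-parameter-subgroup degeneration $\lambda(t)=\mathrm{diag}(tI,I)$ to kill the off-diagonal block is exactly right and shows $\mathrm{gr}(V_X)\in\overline{G\cdot X}$; and the observation that a module isomorphic to its semisimplification must itself be semisimple closes the ``closed $\Rightarrow$ semisimple'' direction. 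For the converse you correctly identify the genuine input: the characteristic-free Hilbert--Mumford theorem due to Kempf (and Birkes in characteristic zero), namely that the unique closed orbit in $\overline{G\cdot X}$ is reached by a limit $\lim_{t\to 0}\lambda(t)\cdot X$ of the point $X$ itself along some one-parameter subgroup $\lambda$ of $G$. The translation of such a limit into a filtration of $V_X$ by $F_m$-submodules, and the fact that in a semisimple module every filtration splits so that $\mathrm{gr}_F(V_X)\cong V_X$, are both correct and complete the argument.

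One comparative remark worth recording: Artin's original proof proceeds along more representation-theoretic and algebraic lines (trace forms, Azumaya algebras, and properties of the scheme of representations), and does not explicitly invoke Kempf's instability theorem, which postdates Artin's paper by a decade. Your GIT proof is cleaner and more conceptual, but trades Artin's elementary-in-principle algebra for a serious black box (Kempf's theorem). Since Kempf's theorem is itself a standard and well-documented result valid over any algebraically closed field, this is a legitimate trade, and you correctly flag it as the single place where care over characteristic is required. The rest of your proposal --- the deduction of (3) from (2) and the uniqueness of closed orbits via Theorem~\ref{first}, and the identification in (2) of the closed orbit as the one corresponding to $\mathrm{gr}(V_X)$ --- is clean bookkeeping and is correct.
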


For the representation $V_X$, let a composition series be $0 = V_0 \subseteq V_1 \subseteq \dots \subseteq V_l = V_X$. Suppose that  $\dim V_i/V_{i-1} = n_i$ for all $i$. Then for an appropriate choice of basis of $K^n$, all the $X_i$'s are in a block upper triangular form, with the sizes of the diagonal blocks being $n_1,\dots,n_l$. Call $(n_1,\dots,n_l)$ the type of the block upper triangularization. The diagonal blocks correspond to the composition factors $V_i/V_{i-1}$ and the upper triangular blocks capture the information of the non-trivial extensions between these composition factors that make up the module $V_X$. In particular, the associated semisimple representation is then obtained by setting the strictly upper triangular blocks to $0$. Hence, we may also rephrase the orbit closure problem for matrix invariants as follows:

\begin{problem} [Orbit closure for matrix invariants rephrased]
Given $X,Y \in \Mat_{n,n}^m$, decide if there exist $g,h \in \GL_n$ such that the $m$-tuples $g \cdot X$ and $h \cdot Y$ are in block upper triangular form of the same type, such that for all $1 \leq i \leq m$, the diagonal blocks of $(g \cdot X)_i = gX_ig^{-1}$ and $(h \cdot Y)_i = hY_ih^{-1}$ are the same?
\end{problem}

\begin{remark} \label{ass.equal}
The more general question of when two representations $V$ and $W$ of a finitely generated algebra $\mathcal{F}$ have isomorphic associated semisimple representations can be reduced to the above problem. Indeed, we have a surjection $F_m \twoheadrightarrow \mathcal{F}$ for some $m$, and hence $V$ and $W$ can be viewed as representations of $F_m$. $V$ and $W$ have isomorphic associated semisimple representations as $F_m$ representations if and only if they have isomorphic associated semisimple representations as $\mathcal{F}$ representations. 
\end{remark}




\subsubsection{Forbes-Shpilka algorithm} Given any separating set $\Set$, an obvious algorithm for the orbit closure problem would be to evaluate the two given points at every invariant function in the set $\Set$. 
In characteristic $0$, Forbes and Shpilka construct a quasi-polynomial sized set of explicit separating invariants in this case (see \cite{FS13}), but this is not sufficient to get a polynomial time algorithm.

Nevertheless, Forbes and Shpilka give a  deterministic parallel polynomial time algorithm for the orbit closure problem in characteristic $0$.
 Given an input $X \in \Mat_{n,n}^m$, one can construct in polynomial time a noncommutative polynomial $P_X$ with the feature that the coefficients of the monomials in $P_X$ are the evaluations of a generating set of invariants on $X$. Hence, to check if the orbit closures of two points $X,Y \in \Mat_{n,n}^m$ intersect, one needs to determine whether the noncommutative polynomial $P_X - P_Y$ is the zero polynomial. There is an efficient algorithm to test whether $P_X- P_Y$ is the zero polynomial (see \cite{RS}). 

\subsubsection{Our results} 
Forbes and Shpilka's algorithm does not work in positive characteristic. In this paper, we provide an algorithm that works in all characteristics.

\begin{theorem} \label{OCMI}
The orbit closure problem for the simultaneous conjugation action of $\GL_n$ on $\Mat_{n,n}^m$ can be decided in polynomial time. Further, 
if  $A,B \in \Mat_{n,n}^m$ and $A\not\sim_C B$,  then an explicit invariant $f \in S(n,m)$ that separates $A$ and $B$ can be found in polynomial time.
\end{theorem}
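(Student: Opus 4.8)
The plan is to reduce the orbit closure problem to the null cone problem for a larger group action, and then solve the null cone problem. The key observation, following the rephrasing in the introduction, is that $A \sim_C B$ if and only if the associated semisimple representations of $V_A$ and $V_B$ agree. The first step will be to make this computable: I would show that $A \sim_C B$ if and only if the $2n$-dimensional tuple $A \oplus B = (A_1 \oplus B_1, \dots, A_m \oplus B_m)$ admits, after conjugation, a block upper triangular form whose diagonal blocks split into two copies of the same semisimple tuple — equivalently, $A \oplus B$ degenerates to $C \oplus C$ for some tuple $C$ whose orbit is closed. The natural way to detect such a degeneration by invariants is to observe that the difference $A \oplus B$ and a "flipped" copy should lie in a common orbit closure; concretely, one sets up a larger space on which the fact that the semisimple parts match becomes a null cone condition.

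**The cleaner route** is probably this: pass to the $(2n)$-tuple, and recall (from the theory of the ring $S(n,m)$, whose degree bounds are cited in Section~\ref{preliminaries}) that $S(n,m)$ is generated by traces of words $\Tr(t_{i_1} \cdots t_{i_d})$ of length at most some explicit bound $\beta(S(n,m))$, which is polynomial in $n$ (this is where the improved degree bound from the paper enters). Then $A \sim_C B$ iff $\Tr(w(A)) = \Tr(w(B))$ for every word $w$ of length at most $\beta(S(n,m))$. Naively there are exponentially many such words, so the main step is to test this equality efficiently. I would encode all these trace values simultaneously: form the noncommutative generating-function-style object $P_A = \sum_w \Tr(w(A)) \, w$ over all words $w$ (of bounded length, or via a matrix-valued formal power series $\Tr\big((I - \sum_i s_i t_i)^{-1}\big)$ truncated to the relevant degree), exactly as in the Forbes–Shpilka approach. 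Checking $A \sim_C B$ then amounts to deciding whether the noncommutative polynomial (or ABP/formula) $P_A - P_B$ vanishes identically, which can be done in deterministic polynomial time by noncommutative polynomial identity testing (Raz–Shpilka). To then \emph{produce} a separating invariant when $A \not\sim_C B$, I would use the PIT algorithm constructively: it either certifies $P_A - P_B = 0$ or returns a monomial $w$ on which the coefficients differ, and $\Tr(w(\cdot))$ is the desired explicit invariant of polynomial degree.

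**The main obstacle** — and the reason Forbes–Shpilka does not extend directly — is positive characteristic: in char $p$, traces of words do \emph{not} generate $S(n,m)$, and more seriously the relationship between closure equivalence and the semisimplification must be handled via invariants that behave well mod $p$. So the real content is to replace "traces of words generate $S(n,m)$" with a characteristic-free statement. I would instead work with a characteristic-free description of the invariants needed to separate semisimple tuples: for semisimple representations, isomorphism is detected by the characteristic-polynomial coefficients of all words $w(A)$ (the "$\sigma_j(w(A))$", not just $\Tr$), which \emph{do} generate $S(n,m)$ in all characteristics by Donkin's theorem, with a polynomial degree bound (again citing Section~\ref{preliminaries}). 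These coefficients $\sigma_j(w(A))$ for $j \le n$ and $w$ of bounded length can be assembled into a single noncommutative algebraic object of polynomial size — for instance the coefficients of $\det(I + y\cdot w(\textstyle\sum_i s_i t_i) )$ expanded appropriately — to which noncommutative (or commutative, after the $s_i,y$ substitution) PIT applies. The plan, then, is: (1) reduce $A \sim_C B$ to equality of all $\sigma_j(w(A))$ and $\sigma_j(w(B))$ for $j \le n$ and $|w| \le \beta(S(n,m))$; (2) bound $\beta(S(n,m))$ polynomially (Donkin / the paper's improved bound); (3) build a poly-size circuit whose nonvanishing encodes a violation of one such equality; (4) run deterministic PIT to decide, and read off a separating $\sigma_j(w)$ from a witnessing monomial. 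I expect step (1)'s characteristic-free justification — that equality of these coefficients really does force isomorphic semisimplifications in char $p$ — to be the delicate point, and step (3), organizing exponentially many coefficients into a polynomial-size algebraic branching program, to be the main technical work.
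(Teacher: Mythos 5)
Your proposal is essentially Forbes--Shpilka plus a hope of patching it in positive characteristic, and the patch has a genuine gap. The paper's proof is structurally quite different.

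\textbf{Where your proposal breaks.} In characteristic $0$ the Forbes--Shpilka route works precisely because trace is linear: one encodes $\sum_{|w|=d} s_w\Tr(A_w)$ as $\Tr\bigl(\bigl(\sum_i s_i A_i\bigr)^d\bigr)$, which an algebraic branching program of polynomial size computes, and then applies noncommutative PIT. Your plan for positive characteristic is to replace $\Tr$ by the characteristic coefficients $\sigma_j$ and organize all $\sigma_j(A_w)$ into a similar noncommutative polynomial. But $\sigma_j$ is not multilinear for $j\geq 2$: $\sigma_j\bigl(\sum_w s_w A_w\bigr) \neq \sum_w s_w\,\sigma_j(A_w)$, so the generating-function identity that makes the ABP small simply fails. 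Expanding $\det\bigl(I + y\,w(\sum_i s_i t_i)\bigr)$ produces cross-terms mixing different words, and there is no known way to separate out the individual $\sigma_j(A_w)$ coefficients with a polynomial-size noncommutative object. You flag this as ``the main technical work,'' but it is not a matter of carefulness; the step as proposed does not go through. (This is exactly why Forbes--Shpilka is advertised as a characteristic-$0$ result, and why the paper explicitly says it does not extend.)

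\textbf{What the paper does instead.} It avoids PIT entirely. Given $A,B$, set $C_i = \bigl(\begin{smallmatrix}A_i&0\\0&B_i\end{smallmatrix}\bigr)$ and let $\mathcal{C}\subseteq\Mat_{2n,2n}$ be the unital algebra they generate. Section~\ref{Section pivot algo} gives a greedy algorithm producing a \emph{pivot basis} $\{C_{w_1},\dots,C_{w_s}\}$ of $\mathcal{C}$, with $s\leq 4n^2$ and each $w_j$ of length $O(n\log n)$ by Shitov's bound on algebra length (Lemma~\ref{imp.bound.length}). Writing $C_{w_j}=\bigl(\begin{smallmatrix}X_j&0\\0&Y_j\end{smallmatrix}\bigr)$, the key observations are: in characteristic $0$, $A\sim_C B$ iff $\Tr(X_j)=\Tr(Y_j)$ for all $j$ (Proposition~\ref{oc-sub}, by linearity of trace over $\mathcal{C}$); in arbitrary characteristic, $A\sim_C B$ iff $\det(\mathrm{Id}+tX_j)=\det(\mathrm{Id}+tY_j)$ for all $j$ (Theorem~\ref{sep.pos.mi}, invoking a result of Cohen--Ivanyos--Wales~\cite{CIW} which says characteristic polynomials of a basis of the enveloping algebra detect the semisimplification). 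There are only polynomially many $j$ and each check is a polynomial-size determinant computation, so no PIT is needed, and a failing $j$ immediately yields an explicit separating invariant $\sigma_{k,w_j}$. The contrast with your plan: the paper replaces ``all words of bounded length'' with ``a basis of $\mathcal{C}$,'' which is a polynomial-size set, and this sidesteps the multilinearity obstruction entirely. It is also what gives the degree bound in Theorem~\ref{sep.bound.mi}, since pivot words have length $O(n\log n)$.

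\textbf{A smaller gap.} Your step~(1) — that matching $\sigma_j(A_w)$ for bounded-length $w$ implies isomorphic semisimplifications in characteristic $p$ — is provided for free by Donkin's theorem that the $\sigma_{j,w}$ generate $S(n,m)$, combined with the general fact (Theorem~\ref{first}) that invariants separate disjoint orbit closures, so this part of your worry is fine. The fatal difficulty is purely the circuit-size question for the $\sigma_j$, which the paper never faces because it only evaluates $\sigma_j$ at polynomially many points.
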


Our algorithm has a remarkable and exciting feature -- analyzing it allows us to prove a bound on the degree of separating invariants! The bounds we obtain beat the existing ones in literature, see \cite{GCT}.


\begin{theorem} \label{sep.bound.mi}
We have $\beta_{\sep} (S(n,m)) \leq 4n^2 \log_2(n) + 12n^2 - 4n.$ If we assume $\kar (K) = 0$, then we have $\beta_{\sep}(S(n,m)) \leq 4n \log_2(n) + 12n - 4$.
\end{theorem}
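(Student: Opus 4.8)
The plan is to extract both bounds from the algorithm behind Theorem~\ref{OCMI}, by showing that whenever it certifies $A\not\sim_C B$ it does so with a separating invariant of controlled degree. By Proposition~\ref{Artin}, $A\sim_C B$ holds precisely when the associated semisimple representations of $V_A$ and $V_B$ are isomorphic; call these $M$ and $N$, each of dimension $n$, and realize them as tuples $M,N\in\Mat_{n,n}^m$. Since every element of $S(n,m)$ is constant on an orbit closure and hence on the unique closed orbit it contains, $f(A)=f(M)$ and $f(B)=f(N)$ for all $f\in S(n,m)$. So it suffices to prove: if $M$ and $N$ are non-isomorphic semisimple representations of $F_m$ of dimension $n$, then they are separated by an element of $S(n,m)$ of degree at most $4n\log_2 n+12n-4$ in characteristic $0$, and at most $4n^2\log_2 n+12n^2-4n$ in general.

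First I would isolate the characteristic-dependent part. In characteristic $0$, Procesi's theorem gives that $S(n,m)$ is spanned by the trace monomials $\Tr(X_{i_1}\cdots X_{i_k})$, which have degree $k$; in arbitrary characteristic, Donkin's theorem gives that $S(n,m)$ is generated by the coefficients $\sigma_t(X_{i_1}\cdots X_{i_k})$ of the characteristic polynomial of a word, with $1\le t\le n$, and $\sigma_t$ of a word of length $k$ has degree $tk\le nk$. Combined with Theorem~\ref{first}, this reduces everything to a single statement: if $M\not\cong N$ are semisimple of dimension $n$, then they are distinguished --- by the trace in characteristic $0$, and by some $\sigma_t$ in general --- on a single word of length at most $L:=4n\log_2 n+12n-4$. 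The extra factor of $n$ in positive characteristic is then exactly the price of passing from $\Tr=\sigma_1$ to an arbitrary $\sigma_t$ with $t\le n$, and costs nothing else.

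The heart of the matter is therefore the word-length bound $L$, which I would prove by induction on $n$ along the lines of the rephrased form of the orbit closure problem. One locates a common invariant subspace, splitting $M$ (and correspondingly $N$) into a subrepresentation and a quotient whose dimensions can be taken to be at most roughly $n/2$, and recurses on the diagonal blocks; when no proper splitting is available, i.e.\ $M$ is simple of dimension $n$, the discrepancy with $N$ is detected directly. Because the dimension halves at each step, the recursion has depth $\lceil\log_2 n\rceil$, and the crucial estimate is that each level contributes only $O(n)$ --- rather than $O(\text{current dimension})$, which would telescope to $O(n)$ total and destroy the logarithm --- to the length of the word one must assemble once everything is re-expressed in terms of the original generators $X_1,\dots,X_m$; summing the $\lceil\log_2 n\rceil$ contributions and tracking the base case together with the Cayley--Hamilton truncations used along the way produces the constants $4$, $12$ and the $-4$.

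I expect the main obstacle to be precisely this per-level estimate, together with the requirement that the pieces combine additively: one must turn the witnesses coming from the $\log_2 n$ recursion levels into a single word whose length is their sum and not their product, while keeping each contribution linear in $n$ --- which calls for a careful length-of-matrix-algebra and Cayley--Hamilton analysis, compounded in positive characteristic by the need to track which $\sigma_t$ is used at each stage. A lesser but genuine point is to carry out the reduction to semisimple representations without paying for it: the representations one ultimately probes have dimension exactly $n$, so the argument must never silently pass through a representation of dimension $2n$, which would degrade $L(n)$ to $L(2n)$.
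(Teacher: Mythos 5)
Your proposal diverges substantially from the paper's argument, and as written it has a real gap: the recursion scheme you sketch is the entire content of the proof, but it is only asserted to work. The paper gets the bound by a much more direct route. Form $C_i=\begin{pmatrix}A_i&0\\0&B_i\end{pmatrix}\in\Mat_{2n,2n}$, let $\mathcal{C}$ be the algebra they generate, and extract a pivot basis (words $w$ such that $C_w$ is not in the span of $C_u$ for $u\prec w$). Shitov's bound on the length of a matrix algebra (Lemma~\ref{imp.bound.length}) says every pivot has length at most $2(2n)\log_2(2n)+4(2n)-4=4n\log_2 n+12n-4$. By linearity of trace together with Procesi's generation theorem (characteristic $0$), or by the Cohen--Ivanyos--Wales criterion for isomorphism of associated semisimple modules (arbitrary characteristic, via $\sigma_1,\dots,\sigma_n$), the discrepancy between $A$ and $B$ must already show up on some pivot word $w$; the factor $n$ in positive characteristic is the $\deg\sigma_t\le n$ you correctly identified. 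There is no recursion, no divide-and-conquer, and no per-level bookkeeping: the constants are read off directly from Shitov's bound evaluated at $2n$.

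This exposes a misconception in your ``lesser but genuine point''. You worry about silently passing through dimension $2n$, but that is precisely what the paper does, on purpose, and it is where the constants come from: one cannot detect the difference between $A$ and $B$ by looking at the subalgebra generated by either tuple alone inside $\Mat_{n,n}$; one has to consider simultaneous pairs $(A_w,B_w)$, and the natural home for those is the algebra $\mathcal{C}\subseteq\Mat_{2n,2n}$. The bound $4n\log_2 n+12n-4$ \emph{is} Shitov's bound at size $2n$; it is not a degraded $L(2n)$ for some hoped-for sharper $L(n)$. So the thing you flag as something to avoid is exactly what you should embrace.

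The recursion plan itself also has unresolved issues even at the level of strategy. A semisimple representation of dimension $n$ need not decompose into pieces of dimension at most $n/2$ --- a simple summand can have dimension $n-1$ --- so the claimed depth of $\lceil\log_2 n\rceil$ is not automatic. The ``each level contributes $O(n)$, and the contributions add rather than multiply'' estimate is the heart of what you would need to prove and is not argued; and the base case ``when $M$ is simple, the discrepancy is detected directly'' needs, on its own, a short-word bound comparable to the one you are trying to establish. In effect the proposal replaces one nontrivial length bound by several, without proving any of them. If you want to avoid the pivot-basis route, the cleanest repair is still to pass to the algebra generated by the block diagonals and cite a length-of-algebra bound; a genuinely different inductive argument would need to confront the non-halving issue and prove the additivity of word lengths across levels.
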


The bound in characteristic $0$ is especially interesting because there are quadratic lower bounds for the degree of generating invariants in this case, see \cite{Formanek}. This also improves the bound in \cite{DM-arbchar} for the degree of invariants defining the null cone.


\subsection{Matrix semi-invariants} \label{msi.intro}
We consider the left-right action of $G=\SL_n \times \SL_n$ on the space $V=\Mat_{n,n}^m$ of $m$-tuples of $n\times n$ matrices. This action is given by
$$
(P,Q)\cdot (X_1,X_2,\dots,X_m)=(PX_1Q^{-1},PX_2Q^{-1},\dots,PX_mQ^{-1}).
$$

We set $R(n,m) = K[V]^G$. The ring $R(n,m)$ is often referred to as the ring of matrix semi-invariants. We will write $\sim_{LR}$ for the equivalence 
relation $\sim$ with respect to this left-right action.

\begin{remark} \label{relate msi-mi}
Two $m$-tuples of $n \times n$ matrices $A = ({\rm Id},A_2,\dots,A_m)$ and $B = ({\rm Id},B_2,\dots,B_m)$ are in the same $\SL_n \times \SL_n$ orbit for the left-right action if and only if $\widetilde{A} = (A_2,\dots,A_m)$ and $\widetilde{B} = (B_2,\dots,B_m)$ are in the same $\GL_n$ orbit for the simultaneous conjugation action. This is compatible with orbit closure in the sense that the orbit closures of $A$ and $B$ intersect for the left-right action if and only if the orbit closures for $\widetilde{A}$ and $\widetilde{B}$ intersect for the simultaneous conjugation action,  see Corollary~\ref{SCtoLR} for the precise statement.
\end{remark}

For $A,B \in \Mat_{n,n}^m$ with $A_1 = {\rm Id}$ it is easy to detect if $A \sim_{LR} B$. If $\det(B_1) \neq 1$, then $A \not\sim_{LR} B$. Otherwise, we have $\det(B_1) = 1$, i.e., $B_1 \in \SL_n$ and hence $\widetilde{B} = (B_1^{-1},{\rm Id}) \cdot B $ is in the same orbit as $B$. Thus, it suffices to detect whether the orbit closures of $A$ and $\widetilde{B}$ intersect. By design, we have $\widetilde{B}_1 = {\rm Id}$. By the above remark, it suffices to detect whether the orbit closures for $(A_2,\dots,A_m)$ and $(\widetilde{B}_2,\dots,\widetilde{B}_m)$ intersect for the conjugation action.

In fact, if we can find a non-singular matrix in the span of $(A_1,\dots,A_m)$, then a similar strategy can be used to detect orbit closure intersection, see Proposition~\ref{LR to SC}. We can now highlight two important issues that need to be addressed.
\begin{enumerate}

\item It is not known how to decide if the span of $A_1,\dots,A_m$ contains a non-singular matrix in polynomial time. In \cite{Val}, it was shown that this problem captures the problem of polynomial identity testing (PIT) (see also \cite{GGOW}). A polynomial time algorithm for PIT is a major open problem in computational complexity.

\item There may not be a non-singular matrix in the span of the matrices $A_1,\dots,A_m$. One might be tempted to hope that this condition would be equivalent to membership in the null cone, but this turns out to be erroneous. The simplest example is the $3$-tuple of $3 \times 3$ matrices 
$$
S = \left( \begin{pmatrix} 0 & 1 & 0 \\ -1 & 0 & 0 \\ 0 & 0 & 0 \end{pmatrix}, \begin{pmatrix} 0 & 0 & 1 \\ 0 & 0 & 0 \\ -1 & 0 & 0 \end{pmatrix}, \begin{pmatrix} 0 & 0 & 0 \\ 0 & 0 & 1 \\ 0 & -1 & 0 \end{pmatrix} \right) \in \Mat_{3,3}^3.
$$
It is well known that $S$ is not in the null cone (see \cite{Dom00b}), but every matrix in the span of $S_1,S_2,S_3$ is singular. Similar examples can be found in \cite{DM-ncrk, DM,  EH, Draisma}. There are several equivalent characterizations of the null cone, and we refer to \cite{GGOW, IQS} for details.
\end{enumerate}



\subsubsection{Null cone membership problem} The null cone membership problem for matrix semi-invariants has attracted a lot of attention due to its connections to non-commutative circuits and identity testing, see \cite{DM,GGOW,HW,IQS,IQS2}. In characteristic $0$, Gurvits' algorithm gives a deterministic polynomial time algorithm, see \cite{DM,GGOW}. There is a different algorithm which works for any sufficiently large field in \cite{IQS2}.

\begin{theorem} [\cite{DM,GGOW,IQS2}]
The null cone membership problem for the left-right action of $\SL_n \times \SL_n$ on $\Mat_{n,n}^m$ can be decided in polynomial time.
\end{theorem}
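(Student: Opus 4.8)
The plan is to reduce null cone membership to the computation of the \emph{non-commutative rank} of the matrix pencil associated to the tuple, and then to invoke one of two algorithms: operator scaling in characteristic zero, and the Wong-sequence algorithm over sufficiently large fields (which covers arbitrary characteristic after a mild, polynomially bounded field extension). First I would recall the structure of $R(n,m)$: by the results of Derksen--Weyman and Domokos--Zubkov (valid in arbitrary characteristic; Schofield--van den Bergh in characteristic $0$), the invariant ring is spanned by the determinantal invariants $f_T(A)=\det\bigl(\sum_{i=1}^m A_i\otimes T_i\bigr)$, where $d\geq 1$ and $T=(T_1,\dots,T_m)\in\Mat_{d,d}^m$. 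Hence $A=(A_1,\dots,A_m)\in\mathcal{N}(\SL_n\times\SL_n,\Mat_{n,n}^m)$ if and only if for every $d$ and every $T\in\Mat_{d,d}^m$ the matrix $\sum_i A_i\otimes T_i$ is singular; equivalently, writing $\mathcal{A}=\operatorname{span}(A_1,\dots,A_m)\subseteq\Mat_{n,n}$ and $\mathcal{A}^{\{d\}}=\{\sum_i M_i\otimes T_i : M_i\in\mathcal{A},\ T_i\in\Mat_{d,d}\}$, the point $A$ lies in the null cone precisely when no blow-up $\mathcal{A}^{\{d\}}$ contains an invertible matrix, i.e. the non-commutative rank of $\mathcal{A}$ is strictly less than $n$.

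The next ingredient is a bound on the relevant blow-up dimension: by the regularity lemma of Ivanyos--Qiao--Subrahmanyam (see also Derksen--Makam), if $\operatorname{ncrank}(\mathcal{A})=n$ then already $\mathcal{A}^{\{d\}}$ contains an invertible element for some $d$ of order $n$, and more generally the generic element of $\mathcal{A}^{\{d\}}$ has rank at least $d\cdot\operatorname{ncrank}(\mathcal{A})$. This makes the problem finite-dimensional, but deciding invertibility of the generic element of $\mathcal{A}^{\{d\}}$ is a priori an instance of polynomial identity testing, so the real work is to avoid guessing or random substitution.

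In characteristic $0$ one uses Gurvits' \emph{operator scaling}. Form the completely positive map $\Phi_A(X)=\sum_i A_i X A_i^{\dagger}$; alternately conjugate the $A_i$ so that $\Phi_A(\mathrm{Id})$ and the adjoint map applied to $\mathrm{Id}$ both approach $\mathrm{Id}$. The capacity of $\Phi_A$ is positive exactly when $A\notin\mathcal{N}$, and the scaling iteration either drives the distance-to-doubly-stochastic below a threshold forcing positive capacity, or, after polynomially many steps, certifies capacity $0$; the analysis (Garg--Gurvits--Oliveira--Wigderson) rests on a potential-function argument together with an explicit lower bound on nonzero capacity. Over an arbitrary field (after passing to a polynomially large extension so the field is big enough) one instead runs the IQS algorithm built on the \emph{second Wong sequence}: starting from a candidate regularity witness, each round either enlarges the witness by elementary linear algebra over the base field or exhibits a $c$-shrunk subspace of $\mathcal{A}$, which certifies null cone membership. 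Since the rank of the witness strictly increases, the procedure halts in $O(n)$ rounds, each polynomial-time.

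The main obstacle is the positive-characteristic case. Operator scaling genuinely exploits the real/complex structure --- positivity, logarithms, the capacity functional --- and has no direct analogue over $\F_p$. One must therefore argue purely algebraically that a blow-up of dimension $O(n)$ already detects full non-commutative rank, and that a constructive search along the Wong sequence is guaranteed to find \emph{either} an invertible blow-up element \emph{or} a shrunk subspace within polynomially many steps, with no randomness. Keeping track of how large the field must be for the generic blow-up element to be witnessed by an explicit one (or, equivalently, bounding the degree of the needed field extension) is the delicate point, and is exactly what the IQS regularity analysis supplies.
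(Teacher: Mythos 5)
This theorem is cited in the paper rather than proved; the paper's entire ``proof'' is the surrounding remark that Gurvits' operator scaling (analyzed in \cite{DM,GGOW}) settles characteristic $0$, while the Wong-sequence algorithm of \cite{IQS2} handles any sufficiently large field. Your sketch accurately identifies the same two algorithms, the reduction of null-cone membership to the condition $\operatorname{ncrank}<n$, and the role of the IQS regularity lemma in bounding the blow-up dimension, so it is a faithful expansion of exactly the sources the paper invokes and matches the intended proof route.
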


\subsubsection{Our results} The above theorem allows us to bypass the two issues mentioned above, and we are able to show a polynomial time reduction from the orbit closure problem for matrix semi-invariants to the orbit closure problem for matrix invariants. In fact, the converse also holds, i.e., there is a polynomial time reduction from the orbit closure problem for matrix invariants to the orbit closure problem for matrix semi-invariants.
As a consequence, we have a polynomial time algorithm for the orbit closure problem for matrix semi-invariants as well. Moreover, due to the nature of the reduction, we will be able to find a separating invariant when the orbit closures of two points do not intersect. 
\begin{theorem} \label{OCMSI}
The orbit closure problem for the left-right action of $\SL_n \times \SL_n$ on $\Mat_{n,n}^m$ can be decided in polynomial time. Further for $A,B \in \Mat_{n,n}^m$, if $A \not\sim_{LR} B$, an explicit invariant $f \in R(n,m)$ that separates $A$ and $B$ can be found in polynomial time.
\end{theorem}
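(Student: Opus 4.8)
The plan is to reduce the orbit closure problem for the left--right action to the one for matrix invariants, which Theorem~\ref{OCMI} settles, using the polynomial--time null cone membership algorithm to sidestep the two obstructions of Section~\ref{msi.intro}. First I would run the null cone algorithm on both $A$ and $B$. If both lie in $\mathcal{N}(\SL_n\times\SL_n,\Mat_{n,n}^m)$ then $0\in\overline{\SL_n\times\SL_n\cdot A}\cap\overline{\SL_n\times\SL_n\cdot B}$, so $A\sim_{LR}B$. If exactly one of them, say $A$, lies in the null cone, then $A\not\sim_{LR}B$, and to produce a witnessing invariant I would use the constructive fact that for $B\notin\mathcal{N}$ one can compute in polynomial time an integer $d$ polynomial in $n$ and matrices $T_1,\dots,T_m\in\Mat_{d,d}$ with $\sum_iT_i\otimes B_i\in\GL_{nd}$. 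Then $f(X):=\det\bigl(\sum_iT_i\otimes X_i\bigr)$ is a homogeneous element of $R(n,m)$ of positive degree $nd$ --- it is invariant because $\det P=\det Q=1$ for $(P,Q)\in\SL_n\times\SL_n$ --- with $f(B)\neq 0$ while $f(A)=0$, since every positive--degree invariant vanishes on the null cone; so $f$ separates $A$ and $B$.

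The remaining, and main, case is $A,B\notin\mathcal{N}$. Here I would again compute a ``regularity witness'' $(d,T_1,\dots,T_m)$ for $A$, so that $\sum_iT_i\otimes A_i$ is invertible. If $\sum_iT_i\otimes B_i$ is singular, the invariant $f$ above again separates $A$ from $B$ and we are done. Otherwise $\sum_iT_i\otimes B_i$ is also invertible, and I would pass from $A$ and $B$ to ``regularized'' tuples $\widehat A,\widehat B\in\Mat_{nd,nd}^{M}$, with $M$ polynomial in $m$ and $d$, assembled from the blown--up matrices $T_i\otimes A_i$, $T_i\otimes B_i$ and a fixed family of auxiliary components, chosen so that (i) the linear span of the components of $\widehat A$ (and of $\widehat B$) contains an invertible matrix, and (ii) $A\sim_{LR}B$ if and only if $\widehat A\sim_{LR}\widehat B$ for the left--right action of $\SL_{nd}\times\SL_{nd}$.

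Granting (i) and (ii), Proposition~\ref{LR to SC} converts the question ``$\widehat A\sim_{LR}\widehat B$?'' into an orbit closure question for the simultaneous conjugation action of $\GL_{nd}$ on an $(M-1)$--tuple of $nd\times nd$ matrices: this is exactly the normalization of Remark~\ref{relate msi-mi}, applied after rescaling the known invertible element of the span to the identity. Theorem~\ref{OCMI} then decides this in polynomial time and, in the negative case, returns an explicit separating matrix invariant in $S(nd,M-1)$. Since the blow--up $A\mapsto\widehat A$ is an explicit equivariant polynomial map (for the homomorphism $(P,Q)\mapsto(I_d\otimes P,I_d\otimes Q)$, which lands in $\SL_{nd}\times\SL_{nd}$), and since the passage of Proposition~\ref{LR to SC} is likewise compatible with invariants, pulling that invariant back along the chain $S(nd,M-1)\to R(nd,M)\to R(n,m)$ yields the required element of $R(n,m)$; checking that everything is computed in polynomial time is routine once $d$, $M$ are seen to be polynomial.

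The hard part is establishing (i)--(ii) while keeping the procedure free of polynomial identity testing. Property (i) must come \emph{for free} from the witness $(d,T_i)$: we never search the span of $A_1,\dots,A_m$ for a nonsingular matrix --- that is precisely the PIT obstruction of Section~\ref{msi.intro} --- and we never need a common witness for $A$ and $B$, because the only way $A$'s witness can fail for $B$ is the case already disposed of by the separating invariant $f$. Property (ii), the \emph{faithfulness} of the blow--up, is the genuinely delicate point: the implication $A\sim_{LR}B\Rightarrow\widehat A\sim_{LR}\widehat B$ is formal, but the converse requires arguing that the restricted family of invariants of $A$ that survives the blow--up still separates the closure classes of tuples nondegenerate with respect to $(d,T_i)$, which I expect to establish from the invariant theory of matrix semi-invariants --- in particular the separating properties of the determinantal semi-invariants $\det(\sum_iU_i\otimes X_i)$ --- together with a careful choice of the auxiliary components of $\widehat A$. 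This is the step I expect to demand the most care.
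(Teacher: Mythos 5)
Your overall plan matches the paper's: run the IQS null cone algorithm on both tuples, dispose of the degenerate cases, use the regularity witness to produce a nonsingular matrix in the span of a blown-up tuple, normalize it to the identity, and hand the problem off to Theorem~\ref{OCMI} via Proposition~\ref{LR to SC}. The early cases (both in the null cone, exactly one in the null cone, $A$'s witness singular on $B$) are treated correctly and coincide with the paper's Steps~1 and~2.

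The genuine gap is exactly the one you flag but do not close: the faithfulness of the blow-up, your property~(ii). The reason this is delicate, and the reason a single regularity witness $(d,T)$ cannot work, is that any invariant $g \in R(nd,md^2)$ evaluated on a blown-up tuple $A^{[d]}$ pulls back only to $d$-th powers of determinantal invariants of $A$: the computation in the proposition preceding Corollary~\ref{cor:n-1,n} shows $f_T(A)^d = g(A^{[d]})$ and conversely $g(A^{[d]}) = f_{\widetilde S}(A)$ for a suitable $\widetilde S$. So the blow-up for a fixed $d$ only ``sees'' the Veronese subring $\nu_{dn}(R(n,m))$, and there are separated pairs $A \not\sim_{LR} B$ on which every invariant of degree a multiple of $dn$ agrees (e.g.\ $B_i = \mu A_i$ with $\mu$ a primitive $d$-th root of unity, $\gcd(d,n)=1$). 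Your proposal never addresses this: you compute one $d$ from the IQS algorithm and build one pair $\widehat A,\widehat B$, and then hope a ``careful choice of auxiliary components'' repairs the loss. The paper's fix is arithmetic, not a cleverer blow-up: it runs the reduction for \emph{both} $d = n-1$ and $d = n$. Since $\gcd(n-1,n)=1$, if $\mu = f_S(A)/f_S(B) \neq 1$ then $\mu^{n-1}$ and $\mu^n$ cannot both be $1$, so at least one of the two Veronese subrings separates $A$ and $B$ (Corollary~\ref{cor:n-1,n}), and IQS guarantees a witness $T(d)$ exists for every $d \geq n-1$, so both reductions can actually be instantiated. You declare $A \sim_{LR} B$ only if the matrix-invariant subroutine reports equivalence for both values of $d$. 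Without this two-coprime-degrees device your reduction is not sound.

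Two smaller remarks. First, your description of $\widehat A$ as ``assembled from the blown-up matrices $T_i \otimes A_i$, $T_i \otimes B_i$ and a fixed family of auxiliary components'' is vaguer than, and probably not equivalent to, the paper's construction: the paper's blow-up $A^{[d]} = (A_i \otimes E_{j,k})_{i,j,k} \in \Mat_{nd,nd}^{md^2}$ does not involve $T$ at all; the role of $T$ is instead to furnish a matrix $P \in \GL_{md^2}$ (via Lemma~\ref{mat.row}) acting by the commuting $\GL_{md^2}$-action, so that $(P \star A^{[d]})_1 = \sum_{i,j,k}(T_i)_{j,k}\,A_i \otimes E_{j,k}$ is invertible. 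Building $\widehat A$ out of $T_i \otimes A_i$ directly would throw away most of the invariants and the faithfulness claim would fail. Second, before applying Proposition~\ref{LR to SC} you must verify not just that $\sum_i T_i \otimes B_i$ is invertible but that $\det\bigl(\sum_i T_i \otimes A_i\bigr) = \det\bigl(\sum_i T_i \otimes B_i\bigr)$ (the paper's Step~2); if the determinants differ, $f_T$ already separates and you should stop there rather than proceed.
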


In characteristic $0$, an analytic algorithm for the orbit closure problem for matrix semi-invariants has also been obtained by Allen-Zhu, Garg, Li, Oliveira and Wigderson in \cite{ZGLOW}. Our algorithm is algebraic, independent of characteristic, and provides a separating invariant when the orbit closures do not intersect.

In \cite{DM-arbchar}, bounds on $\beta_{\sep} (R(n,m))$ were given. In this paper, we give better bounds using a reduction to matrix invariants.

\begin{theorem} \label{sep.red}
We have $\beta_{\sep} (R(n,m)) \leq n^2  \beta_{\sep}(S(n,mn^2))$.
\end{theorem}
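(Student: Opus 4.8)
The idea is to relate separating invariants for the left-right action of $\SL_n \times \SL_n$ on $\Mat_{n,n}^m$ to separating invariants for the simultaneous conjugation action on a larger tuple, namely on $\Mat_{N,N}^{M}$ with $N = n$ ... wait, let me reconsider the shape of the bound. The bound $\beta_{\sep}(R(n,m)) \le n^2 \beta_{\sep}(S(n, mn^2))$ suggests: the number of matrices grows from $m$ to $mn^2$ while the matrix size stays $n$, and there is an extra factor of $n^2$ in the degree. So the reduction must be: given $A \in \Mat_{n,n}^m$ with $A \not\sim_{LR} B$, produce from $A$ a tuple $\Phi(A) \in \Mat_{n,n}^{mn^2}$ such that $\Phi(A) \not\sim_C \Phi(B)$, the map $\Phi$ being "$\SL_n\times\SL_n$-equivariant enough" that conjugation invariants of degree $d$ on the target pull back to left-right semi-invariants of degree $\le n^2 d$ on the source.

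The plan is to reduce orbit closure separation for the left-right action to orbit closure separation for simultaneous conjugation, by producing, for each pair $A,B\in\Mat_{n,n}^m$ with $A\not\sim_{LR}B$, an honest polynomial map $\Phi\colon\Mat_{n,n}^m\to\Mat_{n,n}^{mn^2}$ (allowed to depend on the pair) with three properties: each coordinate of $\Phi$ has degree at most $n^2$ in the entries of the input; $\Phi$ is relatively equivariant for the homomorphism $\SL_n\times\SL_n\to\GL_n$, $(P,Q)\mapsto Q$, where $\GL_n$ acts on the target by simultaneous conjugation; and $\Phi$ separates left-right closure classes, i.e.\ $\Phi(A)\sim_C\Phi(B)$ implies $A\sim_{LR}B$. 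Granting this, $A\not\sim_{LR}B$ forces $\Phi(A)\not\sim_C\Phi(B)$, so by definition of $\beta_{\sep}$ there is $f\in S(n,mn^2)$ of degree $\le\beta_{\sep}(S(n,mn^2))$ with $f(\Phi(A))\ne f(\Phi(B))$, and then $f\circ\Phi\in R(n,m)$ is a homogeneous semi-invariant of degree $\le n^2\beta_{\sep}(S(n,mn^2))$ separating $A$ and $B$, which is exactly the asserted bound. The null cone must be handled separately: if $A$ and $B$ both lie in $\mathcal N(\SL_n\times\SL_n,\Mat_{n,n}^m)$ then $A\sim_{LR}0\sim_{LR}B$, contradicting the hypothesis; and if exactly one does, a semi-invariant cutting out the null cone which is nonzero on the other one separates them. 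Since the null cone for matrix semi-invariants is cut out by semi-invariants of degree at most $n^2$ (the coefficients, in the entries of $A$, of $\det(\sum_i A_i\otimes T_i)$ for auxiliary $(n-1)\times(n-1)$ matrices $T_i$; see \cite{DM-arbchar,IQS2}), and $n^2\le n^2\beta_{\sep}(S(n,mn^2))$, this case is within budget. So we may assume neither $A$ nor $B$ lies in the null cone.

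The construction of $\Phi$ rests on producing an invertible matrix from $A$ by a blow-up of size $n$. Since $A$ and $B$ lie outside the null cone, for a Zariski-generic choice of auxiliary matrices $T=(T_1,\dots,T_m)\in\Mat_{n,n}^m$ both $Z_A:=\sum_i A_i\otimes T_i$ and $Z_B:=\sum_i B_i\otimes T_i$ in $\Mat_{n^2,n^2}$ are invertible; that a size-$n$ blow-up certifies full non-commutative rank is the known blow-up bound \cite{DM,IQS2}, and this is precisely why the number $n^2$ enters. Fix such a $T$ and set $W_j:=\Ad(Z_A)\,(A_j\otimes I_n)\in\Mat_{n^2,n^2}$ for $j=1,\dots,m$. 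Using $\Ad(XY)=\Ad(Y)\Ad(X)$ and $\Ad(P\otimes I_n)=(P\otimes I_n)^{-1}$ for $P\in\SL_n$ (as $\det(P\otimes I_n)=\det(P)^n=1$), a direct computation yields $W_j\big((P,Q)\cdot A\big)=(Q\otimes I_n)\,W_j(A)\,(Q\otimes I_n)^{-1}$: the left factor $P$ cancels, and using $\Ad(Z_A)$ rather than $Z_A^{-1}$ keeps $W_j$ polynomial in $A$, raising its degree from $1$ to $(n^2-1)+1=n^2$. Finally, $\GL_n$ acts on $\Mat_{n^2,n^2}=\mathrm{End}(K^n)\otimes\mathrm{End}(K^n)$ through $Q\mapsto Q\otimes I_n$ by conjugation on the first tensor factor and trivially on the second, so as a $\GL_n$-representation this is $n^2$ copies of the conjugation representation on $\Mat_{n,n}$; writing each $W_j$ in a fixed basis of the second tensor factor produces $mn^2$ matrices $W_{j,k,l}\in\Mat_{n,n}$ carrying the simultaneous conjugation action of $\GL_n$. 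This defines $\Phi$, with the claimed degree and codomain. (Note that for $\Mat_{n,n}^{mn^2}$ the $\GL_n$- and $\SL_n$-conjugation orbit closures coincide, since scalars act trivially, so no $\SL$-versus-$\GL$ issue arises on the target side.)

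The step I expect to be the main obstacle is showing that $\Phi$ separates left-right closure classes. This should come from an adaptation of Remark~\ref{relate msi-mi} and Proposition~\ref{LR to SC} to the blown-up setting: the identity $Z_A W_j=\det(Z_A)(A_j\otimes I_n)$, with $Z_A=\sum_i A_i\otimes T_i$ known, lets one recover the left-right orbit of $(A_j\otimes I_n)_j$ — hence of $A$ — from the conjugation orbit of $(W_j)_j$, and Corollary~\ref{SCtoLR} should upgrade this compatibility to a statement about orbit closures. A technical wrinkle is that the relevant ``invertible element in the span'' lives in the image of the embedding $\SL_n\times\SL_n\hookrightarrow\GL_{n^2}\times\GL_{n^2}$, so one needs the version of the span-based reduction for this embedded group rather than for $\SL_{n^2}\times\SL_{n^2}$; verifying that the argument of Proposition~\ref{LR to SC} survives in this generality, and that a single generic $T$ simultaneously makes $\Phi$ separate the closure classes of the given pair, are the points that require care. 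Since $\Phi$ need only work for the one pair $(A,B)$, this pair-dependence is harmless.
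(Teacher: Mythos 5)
Your construction is close in spirit to the paper's map $\zeta$ (the paper uses $X_{i,j,k}=X_kM_{i,j}(X)$ where $M_T(X)=\Ad(L_T(X))$ for $T\in\Mat_{d,d}^m$, which is essentially your $W_j=\Ad(Z_A)(A_j\otimes I_n)$ after block decomposition, up to a left/right transpose convention), and your degree count and equivariance check are correct. However, the separation claim --- ``$\Phi(A)\sim_C\Phi(B)$ implies $A\sim_{LR}B$'' --- which you flag as the main obstacle, is not merely unproven; it is \emph{false} for your choice of construction, because you fix the blow-up size $d=n$.

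The problem is that every coordinate of your $\Phi$ is homogeneous of degree $n^2$, so $f\circ\Phi$ lies in $\nu_{n^2}(R(n,m))$ for every $f\in S(n,mn^2)$. But $\nu_{n^2}(R(n,m))$ is not a separating set. Concretely, suppose $\kar K \nmid n$, let $A$ lie outside the null cone, and set $B=\mu A$ with $\mu$ a primitive $n^2$-th root of unity. Then $Z_B=\mu Z_A$, so $\Ad(Z_B)=\mu^{n^2-1}\Ad(Z_A)$ and $W_j(B)=\mu^{n^2}W_j(A)=W_j(A)$; hence $\Phi(A)=\Phi(B)$ on the nose, for every choice of $T$. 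Yet $A\not\sim_{LR}B$: by Theorem~\ref{DM} there is $T'\in\Mat_{n-1,n-1}^m$ with $f_{T'}(A)\neq0$, and $f_{T'}(B)=\mu^{(n-1)n}f_{T'}(A)=\mu^{-n}f_{T'}(A)\neq f_{T'}(A)$ since $\mu$ has order $n^2>n$. So no invariant of the form $f\circ\Phi$ can separate this pair. This is exactly the phenomenon the paper isolates after Proposition~\ref{break-up} and handles with Lemma~\ref{penultimate}: one must use \emph{two} blow-up sizes $d\in\{n-1,n\}$, whose associated degrees $(n-1)n$ and $n^2$ are not multiples of one another, so that for at least one choice $\nu_{dn}(R(n,m))$ separates. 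Even then, the paper does not assert $\zeta(A)\not\sim_C\zeta(B)$ abstractly; Lemma~\ref{last} \emph{constructs} a specific $g\in S(n,md^2)$ of the form $\det(N_Z)$ with the factorization $\zeta^*(g)=f_U\cdot\det(M_T)^k$, using a $U$ already known to separate, plus the hypothesis $f_T(A)=f_T(B)\neq0$ (which holds when degree-$dn$ invariants fail to separate). Your sketch of recovering the left-right orbit from $Z_AW_j=\det(Z_A)(A_j\otimes I_n)$ and then ``upgrading to closures'' via Corollary~\ref{SCtoLR} does not supply this; it would need to be replaced by the Veronese case split and the explicit block-determinant construction.
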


Using the bounds on matrix invariants in Theorem~\ref{sep.bound.mi}, we get bounds for matrix semi-invariants.

\begin{corollary} \label{sep.bound.msi}
We have $\beta_{\sep} (R(n,m)) \leq 4n^4 \log_2(n) + 12n^4 - 4n^3$. If we assume $\kar (K) = 0$, then we have $\beta_{\sep}(R(n,m)) \leq 4n^3 \log_2(n) + 12n^3 - 4n^2$.
\end{corollary}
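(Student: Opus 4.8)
The plan is to deduce the corollary by feeding the bounds of Theorem~\ref{sep.bound.mi} into the reduction inequality of Theorem~\ref{sep.red}. The crucial point to exploit is that the right-hand sides in Theorem~\ref{sep.bound.mi}, namely $4n^2\log_2(n) + 12n^2 - 4n$ in general and $4n\log_2(n) + 12n - 4$ in characteristic~$0$, depend only on the matrix size $n$ and \emph{not} on the number of matrices. Consequently, substituting $mn^2$ for the number of matrices changes nothing: $\beta_{\sep}(S(n,mn^2)) \leq 4n^2\log_2(n) + 12n^2 - 4n$, with the sharper value $4n\log_2(n) + 12n - 4$ when $\kar(K) = 0$.

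First I would invoke Theorem~\ref{sep.red} to get $\beta_{\sep}(R(n,m)) \leq n^2\,\beta_{\sep}(S(n,mn^2))$. Then I would bound $\beta_{\sep}(S(n,mn^2))$ using Theorem~\ref{sep.bound.mi} as above, and multiply the resulting estimate by $n^2$. In the general case this yields
\[
\beta_{\sep}(R(n,m)) \leq n^2\bigl(4n^2\log_2(n) + 12n^2 - 4n\bigr) = 4n^4\log_2(n) + 12n^4 - 4n^3,
\]
and in characteristic~$0$ it yields
\[
\beta_{\sep}(R(n,m)) \leq n^2\bigl(4n\log_2(n) + 12n - 4\bigr) = 4n^3\log_2(n) + 12n^3 - 4n^2,
\]
which are precisely the two claimed bounds.

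I do not expect a genuine obstacle in this deduction; all of the mathematical content sits inside Theorem~\ref{sep.red} and Theorem~\ref{sep.bound.mi}, and what remains is a one-line substitution together with an elementary polynomial simplification. The only subtlety worth flagging explicitly in the writeup is the $m$-independence of the separating-degree bound for matrix invariants, since it is exactly this feature that makes the blow-up from $m$ to $mn^2$ in the reduction harmless and keeps the final bound polynomial in $n$.
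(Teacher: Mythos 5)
Your proposal is correct and is exactly the paper's argument: Corollary~\ref{sep.bound.msi} is obtained by substituting the $m$-independent bounds of Theorem~\ref{sep.bound.mi} into the inequality $\beta_{\sep}(R(n,m)) \leq n^2\,\beta_{\sep}(S(n,mn^2))$ from Theorem~\ref{sep.red} and multiplying through by $n^2$. You are also right to flag the $m$-independence of the bound as the key feature that makes the $m \mapsto mn^2$ blow-up harmless.
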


\begin{remark}
There is a representation theoretic viewpoint for orbit closure intersection for matrix semi-invariants in terms of semistable representations of the $m$-Kronecker quiver. We will not recall it as it is not useful for our purposes and refer the interested reader to \cite{King}.
\end{remark}

\begin{remark}
We will say the null cone membership problem and orbit closure problem for matrix invariants (resp. matrix semi-invariants) to refer to the corresponding problem for the simultaneous conjugation action of $\GL_n$ (resp. left-right action of $\SL_n \times \SL_n$) on $\Mat_{n,n}^m$.
\end{remark}

\begin{remark}
Another interesting problem is to determine if two tuples $(X_1,\dots,X_m)$ and $(Y_1,\dots,Y_m)$ are in the same orbit for the simultaneous conjugation action of $\GL_n$ (also for left-right action). An obvious algorithm to do this would be to solve the equations $X_iZ = ZY_i$ for all $i$. This is a linear system of equations that can be solved efficiently. However, we need such a $Z$ to be invertible, so we would need to be able to verify whether the space of solutions to the equations $X_iZ = ZY_i$ has an invertible matrix in it. As pointed out in the discussion after Remark~\ref{relate msi-mi}, it is not known how to do this in polynomial time. Nevertheless, there is a polynomial time algorithm to test if the two tuples $X$ and $Y$ are in the same orbit! We refer the interested reader to \cite{BL,CIV}. 
\end{remark}

\subsection{Organization} In Section~\ref{preliminaries}, we collect a number of preliminary results on matrix invariants and matrix semi-invariants. In Section~\ref{Section reduction}, we show polynomial time reductions in both directions between the orbit closure problems for matrix invariants and matrix semi-invariants. We give a polynomial time algorithm for finding a basis of a subalgebra of matrices in Section~\ref{Section pivot algo}. In Section~\ref{MI-algo}, we give the algorithm for the orbit closure problem for matrix invariants, and prove bounds on separating invariants. Finally in Section~\ref{Section bounds}, we prove Theorem~\ref{sep.red}.

\section{Preliminaries on matrix invariants and matrix semi-invariants} \label{preliminaries}
\subsection{Matrix invariants}
Let us recall that the ring of matrix invariants $S(n,m)$ is the invariant ring for the simultaneous conjugation action of $\GL_n$ on $\Mat_{n,n}^m$, the space of $m$-tuples of $n \times n$ matrices. Sibirski\u\i\   showed (\cite{Sibirskii}) that in characteristic 0, the ring $S(n,m)$ is generated by traces of words in the matrices, see also \cite{Procesi}.

A word in an alphabet set $\Sigma$ is an expression of the form $i_1i_2 \dots i_k$ with $i_j \in \Sigma$. We denote the set of all words in an alphabet $\Sigma$ by $\Sigma^\star$ (the Kleene closure of $\Sigma$). The set $\Sigma^\star$ includes the empty word $\epsilon$. For a word $w = i_1i_2 \dots i_k$, we define its length $l(w) = k$. For a positive integer $m$, we write $[m] := \{1,2,\dots,m\},$ the set of all positive integers less equal $m$. For a word $w = i_1i_2 \dots i_k \in [m]^\star$, and for $X = (X_1,\dots,X_m) \in \Mat_{n,n}^m$, we define $X_w = X_{i_1}X_{i_2} \dots X_{i_k}$. The function $T_w: \Mat_{n,n}^m \rightarrow K$ given by $T_w(X) := {\rm Tr}(X_w)$ is an invariant polynomial. 

\begin{theorem} [\cite{Sibirskii, Procesi}] \label{Pro}
Assume $\kar (K) = 0$. The invariant functions of the form $T_w$, $w \in [m]^\star$ generate $S(n,m)$.
\end{theorem}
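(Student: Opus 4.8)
The plan is to prove this via the first fundamental theorem for the conjugation action of $\GL_n$ combined with classical invariant theory of $\GL_n$ (the Weyl "polarization" machinery). First I would reduce the problem to understanding all $\GL_n$-invariant \emph{multilinear} functions on $(\Mat_{n,n})^{\oplus d}$ for each $d$: by the usual polarization/restitution argument, valid in characteristic $0$, the algebra $S(n,m)$ is spanned by restitutions of multilinear invariants, so it suffices to show every multilinear invariant of $d$ matrices $X_1,\dots,X_d$ is a linear combination of products $\Tr(X_{w^{(1)}})\Tr(X_{w^{(2)}})\cdots$ where $w^{(1)},w^{(2)},\dots$ range over the cycles of a permutation $\sigma\in S_d$ and each $X_j$ appears exactly once. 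Such a product spanning set then restitutes to a polynomial in the $T_w$, because a product of traces of words is a polynomial in traces of single words only when... actually one must be slightly careful: restitution of $\Tr(X_\sigma)$ where $\sigma$ has several cycles yields a product of $T_w$'s, and products are allowed since $S(n,m)$ is an algebra and we only claim the $T_w$ \emph{generate} (not span) it. So the real content is the multilinear classification.

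The key step is the classification of multilinear $\GL_n$-invariants. Here I would invoke classical invariant theory: a multilinear function of $X_1,\dots,X_d \in \Mat_{n,n} = (K^n)\otimes(K^n)^*$ that is invariant under the diagonal $\GL_n$-action is, by the first fundamental theorem for $\GL_n$ acting on copies of the standard representation and its dual (Weyl, \emph{The Classical Groups}), a linear combination of "complete contractions": one pairs up each lower index with some upper index. Since each $X_j$ contributes exactly one upper and one lower index, a complete contraction is exactly the datum of a bijection from the set of lower slots to the set of upper slots, i.e.\ a permutation $\sigma \in S_d$, and the resulting scalar is precisely $\prod_{\text{cycles } c \text{ of } \sigma} \Tr\big(\prod_{j\in c} X_j\big)$ (reading the product around the cycle). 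This is the statement that the map $K[S_d] \to \operatorname{End}_{\GL_n}\big((K^n)^{\otimes d}\big)$ is surjective — Schur–Weyl duality — repackaged on the matrix side. Feeding this back through restitution gives that $S(n,m)$ is spanned (as a vector space) by products of functions $T_w$, hence generated as an algebra by the $T_w$ themselves.

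The main obstacle I anticipate is making the polarization/restitution reduction rigorous and \emph{genuinely} characteristic-$0$: one needs that a homogeneous invariant of degree $e_i$ in $X_i$ is recovered from its full polarization by restitution up to a nonzero integer factor $\prod e_i!$, which is exactly where $\kar(K)=0$ is used and why the theorem fails in positive characteristic. A secondary technical point is to confirm that polarization of an \emph{invariant} is again invariant — this is immediate since polarization only introduces new vector variables and commutes with the (linear, diagonal) group action — and that one may assume the invariant is homogeneous, which follows by decomposing into graded pieces. Once the multilinear case is granted from the classical $\GL_n$ first fundamental theorem, everything else is bookkeeping about cycles of permutations versus words, so I would spend the bulk of the writeup on the polarization argument and simply cite Weyl (or Schur–Weyl duality) for the multilinear classification, noting that $\Sigma^\star$-words up to cyclic rotation index exactly the cycles that appear.
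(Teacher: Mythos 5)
The paper does not give a proof of Theorem~\ref{Pro} at all; it simply cites \cite{Sibirskii} and \cite{Procesi} and moves on, so there is no ``paper's own proof'' to compare against. What you have written is, in outline, exactly Procesi's classical argument, and it is correct. The three ingredients are all present and in the right order: (i) in characteristic $0$ polarization reduces the problem to classifying multilinear invariants, with restitution recovering a degree-$(e_1,\dots,e_m)$ invariant up to the nonzero factor $\prod_i e_i!$ (this is genuinely where $\kar(K)=0$ enters, and you flag it correctly); (ii) a multilinear $\GL_n$-invariant of $d$ matrices lives in $((V\otimes V^*)^{\otimes d})^{*\,\GL_n}\cong\operatorname{End}_{\GL_n}(V^{\otimes d})$, which by Schur--Weyl duality (equivalently, the first fundamental theorem for $\GL_n$ on mixed tensors) is spanned by the images of permutations $\sigma\in S_d$; and (iii) the invariant attached to $\sigma$ is $\prod_{c}\Tr\bigl(X_{j_1}\cdots X_{j_\ell}\bigr)$, the product over cycles $c=(j_1\cdots j_\ell)$ of $\sigma$, and upon restitution this becomes a product of the $T_w$. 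Your side remark about ``generate vs.\ span'' is also correct: a single permutation with several cycles restitutes to a \emph{product} of $T_w$'s, which is exactly what is allowed in a generating (as opposed to spanning) statement.

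One small point worth tightening if you were to write this out in full: the FFT you need is the mixed-tensor version (invariants of $V^{\oplus d}\oplus (V^*)^{\oplus d}$ under $\GL_n$ are generated by the canonical pairings $\langle v_i, \xi_j\rangle$), not merely the surjectivity of $K[S_d]\to\operatorname{End}_{\GL_n}(V^{\otimes d})$; the latter classifies only the multilinear invariants, and one should say a word about why there are no other pairings to worry about when exactly $d$ vectors and $d$ covectors appear, so that every generating invariant is a bijection and hence a permutation. In characteristic $0$ this is unproblematic and is precisely the content of Weyl's theorem. Apart from that, your outline reconstructs the cited proof faithfully.
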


Razmyslov studied trace identities, and as a consequence of his work, we have:

\begin{theorem} [\cite{Raz}] \label{db-mi}
Assume $\kar (K) = 0$. Then $\beta(S(n,m)) \leq n^2$.
\end{theorem}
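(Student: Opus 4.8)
The plan is to combine the first fundamental theorem (Theorem~\ref{Pro}) with the Cayley--Hamilton theorem applied to generic matrices, the bound $n^2$ coming from $\dim_K\Mat_{n,n}=n^2$. By Theorem~\ref{Pro}, $S(n,m)$ is spanned over $K$ by products of traces of words, so it suffices to show, by induction on $d$, that every homogeneous invariant of degree $d$ lies in the subalgebra $A$ generated by invariants of degree $\le n^2$. For $d\le n^2$ there is nothing to prove, and for $d>n^2$ a product of $\ge 2$ traces of total degree $d$ has every factor of degree $<d$ (hence in $A$ by induction), so the only case left is a single $T_w$ with $l(w)=d>n^2$. I would set this up via generic matrices: let $\xi_1,\dots,\xi_m\in\Mat_{n,n}(R)$, $R=K[\Mat_{n,n}^m]$, be the tautological matrices, so $T_w=\Tr(\xi_w)$ and $S(n,m)$ is exactly the subring of $R$ generated by the $T_w$.

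The structural heart is the claim that the trace ring $\widetilde T:=S(n,m)\langle\xi_1,\dots,\xi_m\rangle\subseteq\Mat_{n,n}(R)$ is generated as an $S(n,m)$-module by the products $\xi_u$ with $l(u)\le n^2-1$. I would prove this in two steps. (i) Passing to the generic point, the $\xi_i$ become matrices over a field and generate a subalgebra of dimension $\le n^2$; hence, by the usual strictly-increasing-chain argument on the spans of $\{\xi_u: l(u)\le k\}$, those $\xi_u$ with $l(u)\le n^2-1$ already span the whole generated algebra. (ii) To turn this \emph{field}-linear spanning into an honest $S(n,m)$-linear one, I would use that $\widetilde T$ is module-finite over $S(n,m)$: each $\xi_i$ satisfies its characteristic polynomial $\xi_i^{\,n}-c_{i,1}\xi_i^{\,n-1}+\dots+(-1)^nc_{i,n}=0$ with all $c_{i,j}\in S(n,m)$ --- and this is precisely where $\kar(K)=0$ is used, via Newton's identities expressing $c_{i,j}=e_j(\xi_i)$ as polynomials in $\Tr(\xi_i),\dots,\Tr(\xi_i^{\,j})$. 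Combining integrality with (i), a standard clearing-denominators argument for trace rings (cf.\ Procesi) yields the module-generation statement with the stated length bound.

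Granting the claim, I would conclude as follows. Write $w=i_1i_2\cdots i_d$ with $d>n^2$. Then $\xi_w=\xi_{i_1}\cdot(\xi_{i_2}\cdots\xi_{i_d})\in\xi_{i_1}\widetilde T=\sum_{l(u)\le n^2-1}S(n,m)\,\xi_{i_1u}$, so $\xi_w=\sum_v a_v\xi_v$ with $a_v\in S(n,m)$ and $1\le l(v)\le n^2<d$. Taking traces gives $T_w=\sum_v a_vT_v$, and by multihomogeneity each $a_v$ may be taken homogeneous of degree $d-l(v)$, which is $\ge 1$ and $<d$; so $a_v\in A$ by the inductive hypothesis, while $T_v$ has degree $l(v)\le n^2$, whence $T_w\in A$. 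This closes the induction and gives $\beta(S(n,m))\le n^2$.

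The main obstacle is step (ii): converting the linear dependence forced by $\dim_K\Mat_{n,n}=n^2$ into one with coefficients in $S(n,m)$ and the sharp length bound, i.e.\ the fact that the trace ring is a module-finite extension of the invariant ring generated by short words. This is exactly where characteristic zero enters (Newton's identities make the Cayley--Hamilton coefficients invariants), which also explains why neither this bound nor Theorem~\ref{Pro} extends verbatim to small positive characteristic; step (i) and the degree bookkeeping in the last paragraph are routine.
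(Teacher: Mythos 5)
The paper does not prove this theorem; it is quoted from Razmyslov with a citation, so your proposal has to stand on its own. The overall strategy---reduce to a module-generation statement for the trace ring $\widetilde T=S(n,m)\langle\xi_1,\dots,\xi_m\rangle\subseteq\Mat_{n,n}(R)$ and then take traces---is a legitimate and historically used route to Razmyslov's bound, and the degree bookkeeping in your final paragraph is sound: once you have $\xi_w=\sum_v a_v\xi_v$ with $a_v\in S(n,m)$ homogeneous and $1\le l(v)\le n^2<d$, taking traces and invoking the induction hypothesis does close the argument. Step (i) is also fine: over the fraction field of $R$ the chain of spans $V_k=\mathrm{span}\{\xi_u:l(u)\le k\}$ is strictly increasing until it stabilizes, and since the ambient dimension is $n^2$ it stabilizes by step $n^2-1$.

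The gap is in step (ii), which you correctly identify as ``the main obstacle'' but then dismiss as a standard clearing-denominators argument. Two separate things go wrong. First, clearing denominators from a $\mathrm{Frac}(S(n,m))$-linear relation $\xi_w=\sum_u c_u\xi_u$ produces a relation of the form $f\,\xi_w=\sum_u a_u\xi_u$ with $f\in S(n,m)\setminus\{0\}$, not $\xi_w=\sum_u a_u\xi_u$; step (i) shows only that $M_{n^2}/M_{n^2-1}$, where $M_k=\sum_{l(u)\le k}S(n,m)\xi_u$, is a torsion $S(n,m)$-module, not that it vanishes. Second, the integrality of each individual $\xi_i$ over $S(n,m)$ coming from Cayley--Hamilton (with Newton's identities to land the coefficients in $S(n,m)$) does not by itself make the \emph{noncommutative} algebra $\widetilde T$ module-finite over $S(n,m)$: for noncommutative algebras, integral generators do not give module-finiteness (for instance $K\langle x,y\rangle/(x^2,y^2)$ is infinite-dimensional even though both generators are nilpotent). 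The statement you actually need---that $\widetilde T$ is generated as an $S(n,m)$-module by the $\xi_u$ with $l(u)<n^2$---is not a ``standard'' fact and is essentially of the same depth as Razmyslov's theorem itself; the known proofs go through the second fundamental theorem of trace identities (Procesi--Razmyslov: every trace identity of $n\times n$ matrices is a consequence of the multilinearized Cayley--Hamilton identity) followed by a nontrivial combinatorial reduction, not through a denominator-clearing trick. As written, your proposal replaces the theorem with an unproven claim of comparable difficulty.
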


In positive characteristic, generators of the invariant ring were given by Donkin in \cite{Donkin1,Donkin2}. In simple terms, we have to replace traces with coefficients of characteristic polynomial. For an $n \times n$ matrix $X$, let ${\rm c}(X) = \det({\rm Id} + tX) = \sum_{i=0}^n \sigma_j(X) t^j$ denote its characteristic polynomial. The function $X \mapsto \sigma_j(X)$ is a polynomial in the entries of $X$, and is called the $j^{\rm th}$ characteristic coefficient of $X$. Note that $\sigma_0 = 1$, $\sigma_1(X) =  {\rm Tr}(X)$ and $\sigma_n(X) = \det(X)$. For any word $w$, we define the invariant polynomial $\sigma_{j,w}\in S(n,m)$ by $\sigma_{j,w} (X) := \sigma_j(X_w)$ for $X=(X_1,X_2,\dots,X_m)\in \Mat_{n,n}^m$.

\begin{theorem} [\cite{Donkin1,Donkin2}] \label{donkin-gen}
The set of invariant functions $\{\sigma_{j,w}\mid  w \in [m]^\star,\ 1 \leq j \leq n\}$ is a generating set for the invariant ring $S(n,m)$.
\end{theorem}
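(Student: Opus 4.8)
The plan is to show that the subalgebra $D\subseteq S(n,m)$ generated by the functions $\sigma_{j,w}$, $w\in[m]^\star$, $1\le j\le n$, is all of $S(n,m)$. In characteristic $0$ this is immediate from Theorem~\ref{Pro} together with Newton's identities (which rewrite each $\Tr(X_w)$ as a $\Q$-polynomial in the $\sigma_{j,w'}$) and Cayley--Hamilton (which bounds the length of words one must use). Both of these steps fail in characteristic $p$ --- Newton's identities divide by integers, and $\Tr$-of-words no longer spans all invariants --- so the plan is instead to run a uniform, characteristic-free argument through the theory of good (Schur) filtrations, which is where the characteristic coefficients $\sigma_j$ enter naturally.

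\emph{Step 1: a good filtration on the coordinate ring.} Identify $\Mat_{n,n}$ with $\mathrm{Hom}(K^n,K^n)$, on which $\GL_n\times\GL_n$ acts by $(g,h)\cdot X=gXh^{-1}$; the conjugation action is the restriction along the diagonal $\GL_n\hookrightarrow\GL_n\times\GL_n$. The coordinate ring $K[\Mat_{n,n}]$ has a good filtration as a $\GL_n\times\GL_n$-module with sections $\nabla(\lambda)\otimes\nabla(\lambda)^{\ast}$, $\lambda$ a partition with at most $n$ rows --- the characteristic-free Cauchy formula (see also Donkin~\cite{Donkin1,Donkin2}). Since $K[\Mat_{n,n}^m]\cong K[\Mat_{n,n}]^{\otimes m}$ as $\GL_n\times\GL_n$-algebras and tensor products of modules with good filtration again have good filtration (a theorem of Donkin for $\GL_n$, and of Mathieu in general, via Frobenius splitting), $K[\Mat_{n,n}^m]$ has a good $\GL_n\times\GL_n$-filtration; restricting along the diagonal and invoking the tensor-product theorem once more, it has a good filtration as a $\GL_n$-module for the conjugation action, with sections dual Weyl modules $\nabla(\mu)$, each trivial section $\nabla(0)$ occurring once for each $\nabla(\lambda)\otimes\nabla(\lambda)^{\ast}$ in the $\GL_n\times\GL_n$-filtration (reflecting $\mathrm{End}_{\GL_n}\nabla(\lambda)=K$). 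All of this respects the degree grading, so it applies to each finite-dimensional homogeneous piece.

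\emph{Step 2: read invariants off section by section.} Since $H^1(\GL_n,\nabla(\mu))=0$ for all $\mu$ and $\nabla(\mu)^{\GL_n}$ equals $K$ if $\mu=0$ and $0$ otherwise, passing to invariants along a good filtration $0=M_0\subset M_1\subset\cdots$ of $M:=K[\Mat_{n,n}^m]$ yields short exact sequences $0\to M_{i-1}^{\GL_n}\to M_i^{\GL_n}\to (M_i/M_{i-1})^{\GL_n}\to 0$. Hence $S(n,m)=M^{\GL_n}$ has a basis in bijection with the trivial sections, and $D=S(n,m)$ will follow once we verify that $D$ surjects onto $(M_i/M_{i-1})^{\GL_n}$ for every $i$; we argue by induction on $i$, so the inductive step becomes a purely local statement about the single section $\nabla(0)\subseteq M_i/M_{i-1}$ and which polynomial in the $\sigma_{j,w}$ realizes its generator modulo $M_{i-1}$.

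\emph{Step 3 and the main obstacle.} The trivial section attached to a partition $\lambda$ comes, via Cauchy, from the contraction $\nabla(\lambda)\otimes\nabla(\lambda)^{\ast}\to K$ inside a product of matrix entries laid out on the shape of $\lambda$ --- a ``generalized trace'' of a word-indexed tensor. The characteristic-free bridge is the identities $\sigma_j(X)=\mathrm{tr}\big(\Lambda^jX\,\big|\,\Lambda^jK^n\big)$ and $\Lambda^j(XY)=\Lambda^j(X)\Lambda^j(Y)$, so each $\sigma_{j,w}$ is literally a trace of the word $w$ evaluated in an exterior power; resolving $\nabla(\lambda)$ by tensor products of exterior powers (a Koszul/Cauchy complex defined over $\Z$) expresses traces in $\nabla(\lambda)$ applied to words as $\Z$-polynomials in the $\sigma_{j,w}$, using the characteristic-free Cauchy--Binet and Amitsur-type comultiplication formulas for $\sigma_j(X_wX_{w'})$. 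An induction on $|\lambda|$, peeling off one column at a time, then shows the generator of each trivial section lies in the image of $D$, closing Step 2. The crux is exactly this step: it is a characteristic-free first fundamental theorem for conjugation invariants of matrices, and it is precisely here that traces must be upgraded to characteristic coefficients, since in characteristic $p$ the $\Lambda^j$'s (equivalently, the generalized Schur algebra) are forced into the picture. The other heavy input is the theorem that tensor products preserve good filtrations, used twice in Step~1; without it one does not even obtain the clean section-by-section reduction.
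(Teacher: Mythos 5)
This statement is cited from Donkin's papers \cite{Donkin1,Donkin2}; the paper supplies no proof of its own, so there is nothing internal to compare against. Your sketch does track the strategy Donkin actually uses: the characteristic-free Cauchy filtration of $K[\Mat_{n,n}]$ with $\GL_n\times\GL_n$-sections $\nabla(\lambda)\boxtimes\nabla(\lambda)^{*}$, the Donkin--Mathieu theorem that tensor products preserve good filtrations (used first to handle the $m$-fold tensor power and again to restrict to the diagonal), and Kempf vanishing to make invariants exact along a good filtration, so that $S(n,m)$ acquires a basis indexed by the trivial sections. The identity $\sigma_j(X)=\Tr(\Lambda^j X)$ together with $\Lambda^j(XY)=\Lambda^j(X)\Lambda^j(Y)$ is indeed the right bridge to characteristic coefficients. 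So the overall architecture is faithful to Donkin's approach, not a genuinely different route.

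The one place you should not regard the argument as settled is your Step~3, and you are right to flag it as ``the crux.'' Saying that ``resolving $\nabla(\lambda)$ by tensor products of exterior powers expresses traces in $\nabla(\lambda)$ applied to words as $\Z$-polynomials in the $\sigma_{j,w}$'' is a statement of the goal, not a proof of it: one must show that the generator of the trivial line in each section $\nabla(\lambda)\otimes\nabla(\lambda)^{*}$ is realized, modulo lower filtration pieces, by a polynomial in the $\sigma_{j,w}$ with integer coefficients, and that the inductive bookkeeping (peeling columns off $\lambda$) actually closes. This is where Donkin does the genuine work (and where the Amitsur-type expansion of $\sigma_j$ of a product, and the interaction of the Koszul complex with the filtration, must be made precise). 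Also note a small hygiene point in Step~2: what you need is $\mathrm{Hom}_{\GL_n}\bigl(\Delta(\lambda),\nabla(\lambda)\bigr)=K$ (so the diagonal invariants of each section are one-dimensional), rather than $\mathrm{End}_{\GL_n}\nabla(\lambda)=K$ as written, though the conclusion is the same. As an outline of how Donkin's theorem is proved your proposal is essentially right; as a proof it leaves the hardest step at the level of a plan.
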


In a radically different approach from the case of characteristic $0$, we recently proved a polynomial bound on the degree of generators.

\begin{theorem} [\cite{DM-arbchar}]
We have $\beta(S(n,m)) \leq (m+1)n^4$.
\end{theorem}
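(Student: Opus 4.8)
The plan is to begin from Donkin's theorem (Theorem~\ref{donkin-gen}): $S(n,m)$ is generated by the functions $\sigma_{j,w}$ with $1\le j\le n$ and $w\in[m]^\star$. Since $\deg \sigma_{j,w}=j\cdot l(w)\le n\,l(w)$, it suffices to prove that words of length at most $(m+1)n^3$ already suffice, i.e. to establish a polynomial bound on the word length needed among Donkin's generators. This recasts the theorem as a characteristic-free combinatorial statement about the ``universal'' matrix situation: identities expressing $\sigma_j$ of a long word as a polynomial in the $\sigma_j$'s of shorter words, valid for $n\times n$ matrices over every commutative ring.

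To obtain such a bound I would run a ``blow-up'' argument. The idea is that a Donkin generator $\sigma_{j,w}$ of large degree, though irreducible in $S(n,m)$ itself, becomes decomposable once one passes from $n$-dimensional representations of the free algebra $F_m$ (equivalently, of the one-vertex, $m$-loop quiver) to $dn$-dimensional ones obtained by tensoring with $K^d$: for $d$ large but polynomial in $n$, the restriction of $\sigma_{j,w}$ to the blown-up space lies in the subalgebra generated by the lower-degree invariants, and unwinding the tensor construction forces $\sigma_{j,w}$ itself to be decomposable in $S(n,m)$. Tracking how the dimension vector, the weight $j$, and the word length $l(w)$ interact through this enlargement is what produces the explicit constant $(m+1)n^4$. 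An equivalent packaging of the same reduction routes it through matrix semi-invariants: via the correspondence of Remark~\ref{relate msi-mi} and Proposition~\ref{LR to SC} one compares $S(n,m)$ with $R(n,m+1)$, and polynomial degree bounds for the left-right action (whose proof, via blowing up the dimension vector of the $(m+1)$-Kronecker quiver, is itself characteristic-free) pull back to the desired estimate.

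The main obstacle is that the entire argument must survive in positive characteristic, where $\GL_n$ and $\SL_n\times\SL_n$ are only geometrically reductive: there is no Reynolds operator, so Derksen's general degree bound and Weyl's polarization/restitution are unavailable, and Razmyslov's clean char-$0$ bound $\beta(S(n,m))\le n^2$ --- which rests on multilinearization of trace identities --- has no direct analogue. One is forced to work with the characteristic coefficients $\sigma_j$ rather than traces (costing a factor of $n$ in degree and destroying multilinearity) and to replace complete reducibility by Donkin's good-filtration machinery for $\GL_n$. The technical heart, and the step I expect to be hardest, is the characteristic-free decomposability statement for a blown-up Donkin generator, together with the bookkeeping that converts it into the word-length bound.
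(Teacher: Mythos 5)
Your ``equivalent packaging'' is in fact the actual proof in \cite{DM-arbchar}, while your primary route (a direct blow-up analysis of the Donkin generators $\sigma_{j,w}$ aimed at a word-length bound of $(m+1)n^3$) is not what is done and is left far too vague to assess: you never say why tensoring with $K^d$ forces a decomposition of $\sigma_{j,w}$ back down in $S(n,m)$, nor how the constant would come out. But the alternative route is clean and you should have led with it; the only issue is that you reach for the wrong ingredients. Remark~\ref{relate msi-mi} and Proposition~\ref{LR to SC} are orbit-level statements about when closures intersect; they say nothing about degrees of generators. What you actually need is the ring-level surjection $\phi^*\colon R(n,m+1)\twoheadrightarrow S(n,m)$ of Proposition~\ref{domokos-surjection}, induced by $\phi(X_1,\dots,X_m)=(\mathrm{Id},X_1,\dots,X_m)$. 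This $\phi^*$ is an algebra homomorphism which does not increase degree (it specializes the first matrix to $\mathrm{Id}$), so if $R(n,m+1)$ is generated in degrees $\le D$ then the homogeneous components of the images $\phi^*(f)$ of those generators generate the graded ring $S(n,m)$ in degrees $\le D$, giving $\beta(S(n,m))\le\beta(R(n,m+1))$. Combined with the companion bound $\beta(R(n,m))\le mn^4$ from the same reference (which is where the characteristic-free blow-up argument on the Kronecker quiver actually lives), this yields $\beta(S(n,m))\le\beta(R(n,m+1))\le (m+1)n^4$ immediately, with no need for Donkin's generators or any bookkeeping of word lengths at all.
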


\subsection{Matrix semi-invariants} \label{prelim.msi}
The ring of matrix semi-invariants $R(n,m)$ is the ring of invariants for the left-right action of $\SL_n \times \SL_n$ on $\Mat_{n,n}^m$. There is a determinantal description for semi-invariants of quivers, see \cite{DW,DZ,SVd}. Matrix semi-invariants is a special case -- it is the ring of semi-invariants for the generalized Kronecker quiver, for a particular choice of a dimension vector, see for example \cite{DM}.

Given two matrices $A = (a_{ij})$ of size $p \times q$, and $B = (b_{ij})$ of size $r \times s$, we define their tensor (or Kronecker) product to be 
$$A \otimes B = \begin{bmatrix}
a_{11}B   & a_{12}B     &  \cdots     & a_{1n}B  \\
 a_{21}B  &   \ddots               &                 & \vdots  \\
 \vdots      &       &    \ddots  &\vdots   \\
 a_{m1}B  &       \cdots       & \cdots               & a_{mn}B \\
 \end{bmatrix}\in \Mat_{pr,qs}.$$
Associated to each $T=(T_1,T_2,\dots,T_m)\in \Mat_{d,d}^m$, we define a homogeneous invariant $f_{T}\in R(n,m)$ of degree $dn$ by
$$
f_T(X_1,X_2,\dots,X_m)=\det(T_1\otimes X_1+T_2\otimes X_2+\cdots+T_m\otimes X_m).
$$

\begin{theorem} [\cite{DW,DZ,SVd}]
The invariant ring $R(n,m)$ is spanned by all  $f_T$ with $T\in \Mat_{d,d}^m$ and $d\geq 1$.
\end{theorem}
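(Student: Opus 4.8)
\medskip
\noindent\emph{Proof proposal.} The inclusion of the span of the $f_T$ into $R(n,m)$ is the easy direction: from the identity $\sum_{i}T_i\otimes(PX_iQ^{-1})=(I_d\otimes P)\big(\sum_{i}T_i\otimes X_i\big)(I_d\otimes Q^{-1})$ and multiplicativity of the determinant one gets $f_T((P,Q)\cdot X)=\det(P)^{d}\det(Q)^{-d}\,f_T(X)$, which is $f_T(X)$ for $(P,Q)\in\SL_n\times\SL_n$; one checks similarly that $f_T$ is homogeneous of degree $dn$ and that $f_{T\oplus T'}=f_Tf_{T'}$, so $\operatorname{span}\{f_T\}$ is even a subalgebra of $R(n,m)$. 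The real content is the reverse inclusion, and here is the plan.

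First I would reduce to a single graded piece. Any $f\in R(n,m)$ of degree $e$ decomposes into $\GL_n\times\GL_n$-semi-invariants, and testing a homogeneous semi-invariant of degree $e$ against the scalars $(tI_n,I_n)$ and $(I_n,tI_n)$ forces its weight to be $(\det^{e/n},\det^{-e/n})$; hence $R(n,m)_e=0$ unless $n\mid e$, and $R(n,m)_0=K$. Since $f_T$ with $T\in\Mat_{d,d}^m$ has weight exactly $(\det^{d},\det^{-d})$, it is enough to prove, for every $d\geq 1$, that the $f_T$ with $T\in\Mat_{d,d}^m$ span $R(n,m)_{nd}$. (Under the identification of $\Mat_{n,n}^m$ with the representations of the $m$-Kronecker quiver at dimension vector $(n,n)$, this is exactly the assertion that the Schofield semi-invariants $c^{V}$ attached to the generic representations $V$ of dimension $(d,(m-1)d)$ --- for which $T$ encodes the map in a two-term projective presentation of $V$ --- span the corresponding weight space; so this is the Kronecker instance of the Derksen--Weyman/Schofield--van den Bergh theorem.)

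The engine is the first fundamental theorem for $\SL_n$, applied once at each vertex. View a tuple $X=(X_1,\dots,X_m)$ as the family of $mn$ column vectors $X_i e_j\in K^n$ ($i\in[m]$, $j\in[n]$). The FFT for $\SL_n$ acting on several copies of its standard representation --- which is characteristic-free, being encoded by the straightening law on the bracket (Pl\"ucker) algebra --- writes every invariant for the ``left'' $\SL_n$ as a $K$-linear combination of products of brackets $[\,X_{i_1}e_{j_1}\,|\cdots|\,X_{i_n}e_{j_n}\,]$, i.e.\ maximal minors of the $n\times mn$ coefficient matrix. Imposing invariance under the ``right'' $\SL_n$, which acts on the indices $e_j$ through its standard representation, a second application of the FFT cuts this down to the combinations in which, inside a product of $d$ brackets, all $dn$ of the $j$-indices are contracted against $d$ copies of the $\varepsilon$-tensor on $K^n$. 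It then remains to match these $\varepsilon$-contracted bracket monomials with the $f_T$: the generalized Laplace / Cauchy--Binet expansion of $\det\big(\sum_i T_i\otimes X_i\big)$ along its $d$ block-rows of size $n$ displays $f_T$ as a linear combination of products of $d$ maximal minors of the coefficient matrices, carrying exactly this $\varepsilon$-contraction pattern, with coefficients that are minors of the $T_i$; and conversely, reading off the coefficients of the monomials in the entries of the $T_i$ in this expansion realizes every $\varepsilon$-contracted bracket monomial of weight $(\det^{d},\det^{-d})$ as a $K$-linear combination of $f_T$'s. Combined with the previous paragraph this yields $R(n,m)_{nd}=\operatorname{span}\{f_T:T\in\Mat_{d,d}^m\}$.

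The step I expect to be the main obstacle is making this work in arbitrary characteristic, and in particular the interface between the two applications of the FFT. In characteristic zero one can reach the bracket-monomial description and carry out the matching cleanly via the Cauchy formula and Schur--Weyl duality for $\operatorname{Sym}^{\bullet}\!\big(K^n\otimes(K^n)^{*}\big)$; in characteristic $p$ these tools fail, and one must instead work with the characteristic-free straightening laws together with Donkin's results that the ambient $\GL_n$-modules have good filtrations --- essentially the route of Domokos--Zubkov. (Derksen--Weyman instead treat a general quiver by reducing to a ``bipartite double'' and ultimately to products of copies of $\SL_n$ acting on standard representations, and Schofield--van den Bergh give a complementary, more ring-theoretic argument; any of these can be specialized to the Kronecker quiver.) Once the reductions are in place, the genuinely combinatorial work is all concentrated in the Cauchy--Binet matching of the last step.
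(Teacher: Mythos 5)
The paper does not prove this theorem; it is cited as a known result from [DW,DZ,SVd] and used as a black box, so there is no in-paper argument to compare your proposal against. Your sketch is essentially an outline of the Domokos--Zubkov proof (with the Derksen--Weyman and Schofield--van~den~Bergh proofs mentioned as alternatives), and the parts you carry out in detail are correct: the identity $\sum_i T_i\otimes(PX_iQ^{-1})=(I_d\otimes P)\bigl(\sum_i T_i\otimes X_i\bigr)(I_d\otimes Q^{-1})$ does give $f_T((P,Q)\cdot X)=\det(P)^d\det(Q)^{-d}f_T(X)$; the torus/weight computation correctly shows $R(n,m)_e=0$ unless $n\mid e$ and places $f_T$ with $T\in\Mat_{d,d}^m$ in degree $dn$ with weight $(\det^d,\det^{-d})$; and the multiplicativity $f_{T\oplus T'}=f_Tf_{T'}$ holds.

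The one place your outline is thinner than it reads is the phrase ``a second application of the FFT.'' After the first application you are no longer in a polynomial ring on copies of the standard representation of the second $\SL_n$: you are in the bracket ring, a quotient ring carrying Pl\"ucker/straightening relations, on which the right $\SL_n$ acts but to which the FFT does not apply verbatim. In characteristic zero one can sidestep this with Cauchy's formula and complete reducibility; in general one has to argue via the straightening law and good filtrations (this is exactly the content of the Domokos--Zubkov and Derksen--Weyman papers), and that is where the bulk of the work lies. You flag this difficulty explicitly, so this is not a wrong approach --- it is an honest sketch that defers the genuinely hard step to the cited literature, which is exactly what the paper itself does.
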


In particular, notice that if $d$ is not a multiple of $n$, then there are no degree $d$ invariants. In other words, we have $R(n,m) = \bigoplus_{d=0}^\infty R(n,m)_{dn}$. A polynomial bound on the degree of generators in characteristic $0$ was shown in \cite{DM}, and the restriction on characteristic was removed in \cite{DM-arbchar}.

\begin{theorem} [\cite{DM,DM-arbchar}]
We have $\beta(R(n,m)) \leq mn^4$. If $\kar (K) = 0$, then $\beta(R(n,m)) \leq n^6$.
\end{theorem}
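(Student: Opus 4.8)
The plan is to start from the first fundamental theorem of \cite{DW,DZ,SVd} recalled above: $R(n,m)$ is spanned by the invariants $f_T$, $T\in\Mat_{d,d}^m$, $d\geq 1$, with $\deg f_T = dn$. So it suffices to prove, by induction on $d$, that for $d$ large every $f_T$ lies in the subalgebra generated by invariants of degree $<dn$. The first observation is that $f_T$ is unchanged under simultaneous conjugation of $T$ by $\GL_d$, since $f_{gT_1g^{-1},\dots,gT_mg^{-1}}(X)=\det\bigl((g\otimes{\rm Id}_n)(\sum_i T_i\otimes X_i)(g^{-1}\otimes{\rm Id}_n)\bigr)=f_T(X)$; as a polynomial in the entries of $T$ (with $X$ fixed) it is therefore constant on $\GL_d$-orbit closures. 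By Proposition~\ref{Artin} the closure $\overline{\GL_d\cdot T}$ contains a point $T'$ with $V_{T'}$ isomorphic to the associated semisimple representation of $V_T$, and hence $f_T=f_{T'}$.

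Next I would decompose. Choosing a basis in which the semisimple tuple $T'$ is block diagonal according to the isotypic decomposition $V_{T'}\cong\bigoplus_k W_k^{\oplus a_k}$ into distinct simple $F_m$-modules $W_k$ of dimension $d_k$, the matrix $\sum_i T'_i\otimes X_i$ is correspondingly block diagonal, whence $f_T=f_{T'}=\prod_k f_{W_k}^{a_k}$. If $T$ is not simple then every $d_k<d$, so this exhibits $f_T$ as a product of invariants of degree $<dn$ and the induction closes. Consequently $R(n,m)$ is generated by the invariants $f_W$ with $W$ a simple $F_m$-module and $f_W\neq 0$, and the whole problem is reduced to bounding $\dim W$ for the simple modules that genuinely contribute.

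The bound on $\dim W$ is the technical heart, and I would obtain it by transferring the known degree bounds for matrix invariants. Suppose first that some linear combination of $W_1,\dots,W_m$ is invertible. A change of the variables $X_1,\dots,X_m$ by an element of $\GL_m$ (an action that preserves the grading of $R(n,m)$) followed by left multiplication of the tuple by that inverse (which rescales $f_W$ by a nonzero constant) lets one assume $W_1={\rm Id}_d$; restricting to the slice $X_1={\rm Id}_n$ then gives $f_W|_{X_1={\rm Id}}=\det\bigl({\rm Id}_{dn}+\sum_{i\geq 2}W_i\otimes X_i\bigr)=\sum_j\sigma_j\bigl(\sum_{i\geq 2}W_i\otimes X_i\bigr)$, and each summand is a simultaneous-conjugation invariant in $S(n,m-1)$ of degree $j\leq dn$. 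By Donkin's generating set (Theorem~\ref{donkin-gen}) together with the bound $\beta(S(n,m-1))\leq mn^4$ from \cite{DM-arbchar}, this is a polynomial in characteristic-coefficient generators $\sigma_{j,w}$ of degree $\leq mn^4$; pushing those back up to genuine semi-invariants (via the substitution $X_i\mapsto\Ad(X_1)X_i$, which sends a conjugation invariant of degree $e$ to an element of $R(n,m)$ of degree $ne$) and accounting carefully for degrees and for the kernel of the restriction map produces the bound $\dim W\leq mn^3$, that is, $\beta(R(n,m))\leq mn^4$. In characteristic $0$ one replaces Donkin's generators by the Procesi--Razmyslov trace identities underlying Theorem~\ref{db-mi}, whose bound $\beta(S(n,m))\leq n^2$ does not depend on the number of matrices; feeding this through the same reduction eliminates the factor $m$ and gives $\beta(R(n,m))\leq n^6$.

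The main obstacle is precisely this last step. First, the ``slice and lift'' recipe presupposes that some linear combination of $W_1,\dots,W_m$ is invertible; this can fail even for a simple $W$ of large dimension (when the $W_i$ span a space of singular matrices, in the spirit of the tuple $S$ from the introduction), and handling this case requires a genuinely different idea --- for instance using an invertible \emph{word} in the $W_i$ together with a more elaborate substitution, or a direct Cayley--Hamilton analysis of the algebra generated by $X_1,\dots,X_m$. Second, even in the favourable case the bookkeeping is delicate: $f_W$ need not lie in the image of the slice-and-lift map, and the adjugate substitution inflates degrees by a factor of $n$, so a naive count loses polynomial factors and a sharper (or more global) argument is needed to land on the stated bounds. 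These are exactly the points carried out in \cite{DM} for characteristic $0$ and in \cite{DM-arbchar} in arbitrary characteristic.
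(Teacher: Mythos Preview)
The paper does not prove this theorem at all: it is quoted as a preliminary result from \cite{DM,DM-arbchar} and no argument is given in the present paper. So there is nothing to compare your proposal against here.

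As for the proposal itself, your outline is recognisably the strategy of the cited papers (reduce to $f_W$ with $W$ a simple $F_m$-module, then bound $\dim W$ by transferring degree bounds for $S(n,m)$), and you correctly identify the two real difficulties: (i) a simple $W$ need not have any invertible linear combination of its components, so the naive slice at $X_1={\rm Id}$ is not available in general, and (ii) the adjugate lift multiplies degrees by $n$ and does not hit every $f_W$, so the degree accounting needs more than the back-of-the-envelope computation you sketched. These are genuine gaps, not mere bookkeeping, and your proposal does not close them --- you say as much yourself. In particular, the sentence ``feeding this through the same reduction \dots gives $\beta(R(n,m))\leq n^6$'' is not a proof but a pointer to \cite{DM}, and likewise for the $mn^4$ bound and \cite{DM-arbchar}. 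If the intent is to reproduce those arguments, you would need to supply the missing ingredient that handles simple tuples spanning a singular matrix space (in \cite{DM} this is done via the regularity lemma of \cite{IQS} and a blow-up, not via a word-in-$W_i$ trick), and to carry out the degree count precisely.
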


 Let $\mathcal{N}(n,m)$ denote the null cone for the left-right action of $\SL_n \times \SL_n$ on $\Mat_{n,n}^m$. The following is proved in \cite{DM}.

\begin{theorem} [\cite{DM}] \label{DM}
For $X \in \Mat_{n,n}^m$, the following are equivalent:
\begin{enumerate}
\item $X \notin \mathcal{N}(n,m)$;
\item For some $d \in \N$, there exists $T \in \Mat_{d,d}^m$ such that $f_T(X) \neq 0$;
\item For any $d \geq n-1$, there exists $T \in \Mat_{d,d}^m$ such that $f_T(X) \neq 0.$
\end{enumerate}
\end{theorem}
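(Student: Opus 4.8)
The plan is to get $(1)\Leftrightarrow(2)$ and $(3)\Rightarrow(2)$ essentially for free and to concentrate all the difficulty in a ``regularity'' statement about ranks of blow-ups. For $(1)\Leftrightarrow(2)$: every $f_T$ with $T\in\Mat_{d,d}^m$ is homogeneous of positive degree $dn$, and by the spanning theorem of \cite{DW,DZ,SVd} the functions $f_T$ span $R(n,m)$, so (degree-$0$ invariants being constants) they span $R(n,m)_+$. Since $\mathcal N(n,m)=\mathbb V(R(n,m)_+)$, a tuple $X$ lies in $\mathcal N(n,m)$ if and only if $f_T(X)=0$ for all $d\geq 1$ and all $T\in\Mat_{d,d}^m$; negating this is exactly $(1)\Leftrightarrow(2)$. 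And $(3)\Rightarrow(2)$ is immediate (apply $(3)$ with $d=\max(n-1,1)\in\N$). So the content is $(2)\Rightarrow(3)$.

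To reduce $(2)\Rightarrow(3)$, for $d\geq 1$ put $c_d(X)=\max_{T\in\Mat_{d,d}^m}\operatorname{rank}(\sum_i T_i\otimes X_i)$. Since $T\mapsto f_T(X)=\det(\sum_i T_i\otimes X_i)$ is a single polynomial in the entries of $T$, one has $c_d(X)=dn$ if and only if this polynomial is not identically zero, i.e.\ if and only if $f_T(X)\neq 0$ for some $T$, equivalently for generic $T$, in $\Mat_{d,d}^m$; in particular ``there exists $T$'' in the theorem may be replaced by ``for generic $T$''. The block-diagonal construction $(T\oplus T')_i=T_i\oplus T'_i$ gives $f_{T\oplus T'}(X)=f_T(X)\,f_{T'}(X)$, hence $c_{d+d'}(X)\geq c_d(X)+c_{d'}(X)$, so by Fekete's lemma the limit $\operatorname{ncrk}(X):=\lim_d c_d(X)/d$ exists and equals $\sup_d c_d(X)/d\leq n$ (the noncommutative rank of $X$). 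Now assume $(2)$: by $(1)\Leftrightarrow(2)$, $X\notin\mathcal N(n,m)$, so $c_{d_0}(X)=d_0 n$ for some $d_0\geq 1$. Thus $(2)\Rightarrow(3)$ reduces to the \emph{regularity lemma}: if $c_{d_0}(X)=d_0 n$ for some $d_0\geq 1$ (equivalently $\operatorname{ncrk}(X)=n$), then $c_d(X)=dn$ for every $d\geq n-1$; granting this, each $d\geq n-1$ admits a (generic) $T\in\Mat_{d,d}^m$ with $f_T(X)\neq 0$, which is $(3)$.

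The regularity lemma has an easy case: if $\langle X_1,\dots,X_m\rangle$ contains an invertible matrix, then after replacing the tuple by an equivalent one and applying a $\GL_n\times\GL_n$-transformation (neither of which changes the numbers $c_d(X)$ or membership in $\mathcal N(n,m)$) we may assume $X_1={\rm Id}$; taking $T_1={\rm Id}_d$, the function $\det({\rm Id}_{dn}+\sum_{i\geq 2}T_i\otimes X_i)$ is a polynomial in the entries of $T_2,\dots,T_m$ equal to $1$ at $T_2=\dots=T_m=0$, hence not identically zero, and so $c_d(X)=dn$ for all $d\geq 1$. The genuinely hard case --- the step I expect to be the main obstacle --- is when $\langle X_1,\dots,X_m\rangle$ has no invertible element and yet $X\notin\mathcal N(n,m)$, as for the tuple $S$ above; there the sharp threshold $d\geq n-1$ has to be wrung out of the structure of matrix spaces with no shrunk subspace. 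The route I would take is to combine the Hilbert--Mumford / shrunk-subspace description of $\mathcal N(n,m)$ with an induction on $n$: pick a matrix of maximal rank $r<n$ in $\langle X_1,\dots,X_m\rangle$, use a Wong-sequence-type argument to pass to a smaller matrix space (of size controlled by $r$) that still has no shrunk subspace, and track how the generic blow-up rank transforms under this passage --- this is essentially the argument of \cite{IQS} and \cite{DM}, and the bookkeeping that produces exactly the constant $n-1$ is the delicate point.
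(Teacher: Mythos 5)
The paper does not give a proof of this theorem; it is quoted from \cite{DM}, and the paper only remarks that it ``relies crucially on the regularity lemma proved in \cite{IQS}.'' Your reduction agrees exactly with that: $(1)\Leftrightarrow(2)$ from $\mathcal N(n,m)=\mathbb V(R(n,m)_+)$ and the spanning theorem is correct, $(3)\Rightarrow(2)$ is trivial, and the $T\oplus T'$ block construction giving superadditivity of $c_d(X)$, hence a well-defined noncommutative rank $\operatorname{ncrk}(X)=\lim_d c_d(X)/d$, is the standard and correct framing. Your easy case (a nonsingular matrix in the span) is also fine. So the architecture matches the cited sources.

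The genuine gap is that you do not prove the statement you isolate as the crux: that $\operatorname{ncrk}(X)=n$ forces $c_d(X)=dn$ for \emph{every} $d\geq n-1$. Your last paragraph correctly names what is needed (a Wong-sequence / shrunk-subspace induction on $n$, as in \cite{IQS} and \cite{DM}), and correctly identifies that extracting the sharp constant $n-1$ is where the work lives, but it stops at a route description rather than an argument. Fekete's lemma gives you only that $c_d(X)/d\to n$ and is eventually close to $n$; it does not give exact equality $c_d(X)=dn$ at any finite $d$, let alone uniformly from $d=n-1$ onward. Without the regularity lemma, nothing in the proposal rules out, say, $c_{n-1}(X)=(n-1)n-1$ with $c_d(X)=dn$ only for much larger $d$. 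Since $(2)\Rightarrow(3)$ is precisely the substance of the theorem, the proposal as written is a correct reduction plus a citation, not a proof.
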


The above theorem relies crucially on the regularity lemma proved in \cite{IQS}. A more conceptual proof of the regularity lemma is given in \cite{DM-ncrk} using universal division algebras, although it lacks the constructiveness of the original proof. 

An algorithmic version of the above theorem appears in \cite{IQS2}.
\begin{theorem} [\cite{IQS2}] \label{IQS-algo}
For $X \in  \Mat_{n,n}^m$, there is a deterministic polynomial time (in $n$ and $m$) algorithm which determines if $X \notin \mathcal{N}(n,m)$. Further,  for $X \notin \mathcal{N}(n,m)$ and any $n-1 \leq d \leq {\rm poly}(n)$, the algorithm provides in polynomial time, an explicit $T \in \Mat_{d,d}^m$ such that $f_T(X) \neq 0$. 
\end{theorem}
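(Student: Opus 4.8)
The plan is to reduce this to computing the \emph{non-commutative rank} of the matrix space $\mathcal{X} = \operatorname{span}_K(X_1,\dots,X_m)\subseteq \Mat_{n,n}$, and then to run the second Wong sequence algorithm on its blow-ups. For $d\geq 1$ put $\mathcal{X}^{(d)}=\{\sum_{i=1}^m T_i\otimes X_i \mid T_1,\dots,T_m\in\Mat_{d,d}\}\subseteq \Mat_{dn,dn}$; then $f_T(X)\neq 0$ for some $T\in\Mat_{d,d}^m$ precisely when $\mathcal{X}^{(d)}$ contains a matrix of full rank $dn$. By Theorem~\ref{DM} this occurs for some (equivalently every) $d\geq n-1$ if and only if $X\notin\mathcal{N}(n,m)$; and by the regularity lemma of \cite{IQS} (reproved in \cite{DM-ncrk}) the maximum rank attained on $\mathcal{X}^{(d)}$ equals $d\cdot\operatorname{ncrk}(\mathcal{X})$ as soon as $d\geq n-1$. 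So I would fix one value $d$ with $n-1\leq d\leq\operatorname{poly}(n)$ and reduce to: find a matrix of maximum rank in $\mathcal{X}^{(d)}$, and test whether that rank is $dn$.

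First I would set up the rank-increment subroutine. Given $B\in\mathcal{X}^{(d)}$ of rank $r<dn$, form the \emph{second Wong sequence} $W_0=0$, $W_{i+1}=B^{-1}\big(\mathcal{X}^{(d)}W_i\big)$ (preimage under $B$ of the span of the images $B'W_i$), which is nondecreasing and stabilizes at some $W^\ast=W_\ell$ with $\ell\leq dn$; every step is a kernel/image computation. The dichotomy I would use is: either $\mathcal{X}^{(d)}W^\ast\subseteq\operatorname{im}(B)$, in which case $W^\ast$ is a \emph{shrunk subspace} of deficiency $dn-r$, so $r$ is already the maximum rank on $\mathcal{X}^{(d)}$ and $\operatorname{ncrk}(\mathcal{X})=r/d$; or $\mathcal{X}^{(d)}W^\ast\not\subseteq\operatorname{im}(B)$, in which case, by chasing the chain $W_0\subseteq W_1\subseteq\cdots$ back to $B$, one can exhibit $B'\in\mathcal{X}^{(d)}$ and a scalar $t\in K$ with $B+tB'$ of rank $>r$ — here I would use that $|K|$ exceeds a suitable polynomial in $n$, so that a valid $t$ exists and can be located deterministically. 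Starting from any nonzero $B$ and iterating this at most $dn$ times, I would end either with a matrix $\sum_i T_i\otimes X_i$ of full rank $dn$, whose coefficient tuple $T$ is the desired explicit witness and certifies $X\notin\mathcal{N}(n,m)$, or with a shrunk-subspace certificate that $\operatorname{ncrk}(\mathcal{X})<n$, i.e. $X\in\mathcal{N}(n,m)$. For the other admissible values of $d$ I would rerun with that $d$ (or pad/restrict a witness), and if $K$ is finite but a little too small I would first pass to a degree-$\operatorname{poly}(n)$ extension, which affects neither null-cone membership nor, after rewriting coordinates over a $K$-basis, the existence of a polynomial-size witness defined over $K$.

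The step I expect to be the real obstacle is exactly the dichotomy above, i.e. the \emph{constructive} regularity lemma: showing that when $\mathcal{X}^{(d)}W^\ast\not\subseteq\operatorname{im}(B)$ the rank can be increased \emph{without enlarging} $d$, and that $d=n-1$ already suffices to realize $d\cdot\operatorname{ncrk}(\mathcal{X})$. Its qualitative form is the regularity lemma of \cite{IQS} (given a conceptual proof via universal division algebras in \cite{DM-ncrk}); the algorithmic form needs in addition that the rank increase be carried out deterministically, which is where the hypothesis on $|K|$ is consumed. Everything else — computing Wong sequences, maintaining shrunk subspaces, bounding the iteration count by $dn$, and bounding the arithmetic cost polynomially — should be routine linear algebra.
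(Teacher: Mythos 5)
The paper does not prove this theorem; it is cited verbatim from \cite{IQS2}, so there is no internal argument to compare against. That said, your sketch is a faithful high-level reconstruction of the Ivanyos--Qiao--Subrahmanyam approach: reduce to computing the non-commutative rank of $\mathcal{X}=\operatorname{span}(X_1,\dots,X_m)$ via its blow-ups, use second Wong sequences as the certify-or-augment primitive, and invoke the (constructive) regularity lemma to control the blow-up size. You have also correctly isolated where the genuine work lies.

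One inaccuracy worth flagging: you assert that whenever $\mathcal{X}^{(d)}W^\ast\not\subseteq\operatorname{im}(B)$ the rank can be increased \emph{without enlarging $d$}, and you structure the algorithm as ``fix $d$, iterate at level $d$.'' That is not how the actual procedure works, and the in-place increment is not true in general. In \cite{IQS2} the augmentation step may force a jump to a larger blow-up level (roughly, to a common multiple), and the constructive regularity (``rounding'' or ``blow-down'') lemma is then used to return to a prescribed divisible level while preserving the rank. So the schedule is not ``stay at level $d$'' but ``increase rank at whatever level is convenient, then round down,'' with the total number of blow-up changes bounded polynomially. You did tag this exact step as the expected obstacle, so the gap is one of formulation rather than of ideas — but as written the fixed-$d$ loop would not obviously terminate with a correct witness, and the correctness argument needs the blow-up/round-down bookkeeping. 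Everything else in the sketch — Wong-sequence stabilization in $\leq dn$ steps, the shrunk-subspace certificate for the null-cone direction, the dependence on $|K|$ and the field-extension workaround — matches the cited source.
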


\begin{remark}
We will henceforth refer to  the algorithm in Theorem~\ref{IQS-algo} above as the IQS algorithm.
\end{remark}

For $1 \leq j,k \leq d$, we define $E_{j,k} \in \Mat_{d,d}$ to be the $d \times d$ matrix which has a $1$ in the $(j,k)^{th}$ entry, and $0$ everywhere else.

\begin{definition}
If $X = (X_1,\dots,X_m) \in \Mat_{n,n}^m$, we define $X^{[d]} = (X_i \otimes E_{j,k})_{i,j,k} \in \Mat_{nd,nd}^{md^2}$, where the tuples $(i,j,k) \in [m] \times [d] \times [d]$ are ordered lexicographically.
\end{definition}

\begin{proposition}
The following are equivalent
\begin{enumerate}
\item There exists $f \in R(n,m)$ such that $f(A) \neq f(B)$;
\item There exists $g \in R(nd,md^2)$ such that $g(A^{[d]}) \neq g(B^{[d]})$ for either $d = n-1$ or $d = n$. 
\end{enumerate}
\end{proposition}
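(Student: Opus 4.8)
The plan is to prove the two implications separately. The implication $(2)\Rightarrow(1)$ is the easy one and follows from equivariance of the assignment $X\mapsto X^{[d]}$ together with Theorem~\ref{first}. The implication $(1)\Rightarrow(2)$ is the substantive one: it rests on a ``pullback'' identity that rewrites the spanning invariants $f_T$ of $R(n,m)$ as invariants of $R(nd,md^2)$ evaluated at $X^{[d]}$, and then on the fact that invariants $f_T$ of block size $n-1$ or $n$ already suffice to separate $\sim_{LR}$-classes.

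\emph{The direction $(2)\Rightarrow(1)$.} First I would record the identity $\bigl((P,Q)\cdot X\bigr)^{[d]}=(P\otimes I_d,\,Q\otimes I_d)\cdot X^{[d]}$, which is immediate from $(PX_iQ^{-1})\otimes E_{j,k}=(P\otimes I_d)(X_i\otimes E_{j,k})(Q\otimes I_d)^{-1}$. Since $\det(P\otimes I_d)=\det(P)^d$, the map $P\mapsto P\otimes I_d$ sends $\SL_n$ into $\SL_{nd}$, so $X\mapsto X^{[d]}$ is equivariant for a group homomorphism $\SL_n\times\SL_n\to\SL_{nd}\times\SL_{nd}$, and it is polynomial (indeed linear) in $X$. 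Hence it carries $\SL_n\times\SL_n$-orbit closures into $\SL_{nd}\times\SL_{nd}$-orbit closures, so $A\sim_{LR}B$ implies $A^{[d]}\sim_{LR}B^{[d]}$. Taking the contrapositive and applying Theorem~\ref{first} on both sides gives $(2)\Rightarrow(1)$ — in fact for every $d\geq 1$, not only for $d\in\{n-1,n\}$.

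\emph{The direction $(1)\Rightarrow(2)$: the pullback identity.} For $S=(S_{(i,j,k)})\in\Mat_{e,e}^{md^2}$ put $U_i:=\sum_{j,k}S_{(i,j,k)}\otimes E_{j,k}\in\Mat_{ed,ed}$, i.e.\ the $ed\times ed$ block matrix with $(j,k)$-block $S_{(i,j,k)}$. Conjugating by the permutation matrix that swaps the two tensor factors turns $\sum_{i,j,k}S_{(i,j,k)}\otimes\bigl(X_i\otimes E_{j,k}\bigr)$ into $\sum_i X_i\otimes U_i$, so $f_S(X^{[d]})=f_U(X)$ with $U=(U_1,\dots,U_m)\in\Mat_{ed,ed}^m$; and $S\mapsto U$ is a bijection $\Mat_{e,e}^{md^2}\to\Mat_{ed,ed}^m$, since it merely reorganizes the $e^2d^2$ scalar entries of each slot. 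Letting $e$ vary, the pullback along $X\mapsto X^{[d]}$ of the spanning set $\{f_S\}$ of $R(nd,md^2)$ is exactly $\{f_U:U\in\Mat_{c,c}^m,\ d\mid c\}$. Thus $(1)\Rightarrow(2)$ reduces to the claim: if $A\not\sim_{LR}B$, then $f_U(A)\neq f_U(B)$ for some $U$ whose block size is a multiple of $n-1$ or of $n$ (given such a $U$, take the corresponding scalar tuple $S\in\Mat_{c/d,c/d}^{md^2}$ and set $g=f_S$).

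\emph{The main obstacle.} Proving that claim is the crux. Since $R(n,m)$ is spanned by all the $f_U$, some $f_{U_0}$ separates $A$ and $B$; the work is to trade $U_0$ for one of block size divisible by $n-1$ or $n$. Because $A\not\sim_{LR}B$, Theorem~\ref{first} forbids both $A$ and $B$ from lying in $\mathcal N(n,m)$ (else every positive-degree invariant would vanish on both). If exactly one does, say $B\in\mathcal N(n,m)$, then Theorem~\ref{DM}(3) with $d=n-1$ yields $T\in\Mat_{n-1,n-1}^m$ with $f_T(A)\neq 0=f_T(B)$, of block size $n-1$, and we are done. The remaining case $A,B\notin\mathcal N(n,m)$ is the delicate one. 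Here I would bootstrap from $f_{U_0}$, using multiplicativity $f_Uf_{U'}=f_{U\oplus U'}$ and the fact (from Theorem~\ref{DM}(3): for each $d\geq n-1$, the loci $\{W:f_W(A)\neq0\}$ and $\{W:f_W(B)\neq0\}$ are nonempty principal open subsets of the irreducible variety $\Mat_{d,d}^m$, hence meet) that there is $W$ of any prescribed block size $\geq n-1$ with $f_W(A)\neq0\neq f_W(B)$; replacing $U_0$ by $U_0^{\oplus a}\oplus W_1^{\oplus r_1}\oplus W_2^{\oplus r_2}$ with $W_1,W_2$ of block sizes $n-1,n$ and exploiting the numerical semigroup $\langle n-1,n\rangle$ (Frobenius number $(n-1)(n-2)-1$) lets one force the total block size to be a multiple of $n-1$. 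The technical point, and where I expect to spend the most effort, is choosing the multiplicities so the separation survives — i.e.\ $f_{U_0}(A)^a\prod f_{W}(A)^{r}\neq f_{U_0}(B)^a\prod f_{W}(B)^{r}$ — which needs care because quotients such as $f_{U_0}(A)/f_{U_0}(B)$ may be roots of unity. (Alternatively, one can invoke the regularity-lemma circle of ideas underlying Theorem~\ref{DM} to see directly that the $f_T$ with $T\in\Mat_{n-1,n-1}^m$ already separate any two points outside the null cone, which settles the claim with $d=n-1$ throughout.)
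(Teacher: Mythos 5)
Your setup for the direction $(1)\Rightarrow(2)$ is exactly the same dictionary the paper uses: the pullback of the spanning invariants $f_S$ on $R(nd,md^2)$ along $X\mapsto X^{[d]}$ is precisely the set of $f_U$ with $U\in\Mat_{c,c}^m$ and $d\mid c$, and $(2)\Rightarrow(1)$ follows either by the explicit $S\mapsto\widetilde S$ construction (the paper's route) or by your equivariance-plus-Theorem~\ref{first} argument; both are fine. But you stop short of completing $(1)\Rightarrow(2)$ --- you correctly reduce it to the claim that some separating $f_U$ can be taken with block size a multiple of $n-1$ or of $n$, and then describe this as ``the crux'' without resolving it, and the route you sketch for resolving it (mixing in auxiliary factors $W_1^{\oplus r_1}\oplus W_2^{\oplus r_2}$ and juggling the numerical semigroup $\langle n-1,n\rangle$) is both more complicated than necessary and, as you acknowledge, unfinished because of the root-of-unity issue. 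That issue is precisely what the paper's one-line observation dispatches: if $f_T$ separates $A$ and $B$ with, say, $f_T(A)\neq 0$, set $\mu=f_T(B)/f_T(A)\neq 1$; since $\mu^{n-1}=\mu^n=1$ would force $\mu=1$, at least one of $d=n-1$, $d=n$ gives $f_T(A)^d\neq f_T(B)^d$. Then $f_T^d=f_{T^{\oplus d}}$ already has block size $ed$, a multiple of $d$, and you are done --- no auxiliary factors or semigroup bookkeeping required, and no worry about $\mu$ being a root of unity of one of those two consecutive orders. Your parenthetical alternative (that $f_T$ with $T\in\Mat_{n-1,n-1}^m$ alone separate all points outside the null cone) is not supported by Theorem~\ref{DM}, which only controls vanishing, not separation, and should not be relied on. So: the framing and $(2)\Rightarrow(1)$ are correct, but $(1)\Rightarrow(2)$ has a genuine gap that the power-of-$d$ trick with $d\in\{n-1,n\}$ closes cleanly.
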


\begin{proof}
We first show $(1) \implies (2)$. We can assume $f = f_T$ for some $T \in \Mat_{e,e}^m$ for some $e \geq 1$. Without loss of generality, assume $f(A) \neq 0$. Then we have $\mu = f(B)/f(A) \neq 1$. For any $\mu \neq 1$, both $\mu^{n-1}$ and $\mu^n$ cannot be $1$. Hence for at least one of $d  \in \{n-1,n\}$, we have $\mu^d = f(B)^d/f(A)^d \neq 1$, and hence $f(A)^d \neq f(B)^d$. Now, it suffices to show the existence of $g \in R(nd,md^2)$ such that $g(A^{[d]}) = f(A)^d$ for all $A \in \Mat_{n,n}^m$.

But now, consider \begin{align*}
f_T(A)^d &= \textstyle \det\big(\sum_{i=1}^m T_i \otimes A_i\big)^d \\
&= \textstyle \det\big(\sum_{i=1}^m T_i^{\oplus d} \otimes A_i\big) \\
& = \textstyle\det\big(\sum_{i=1}^m (\sum_{k = 1}^d T_i \otimes E_{k,k} \otimes A_i)\big) \\
& = \textstyle \det\big(\sum_{i,k} T_i \otimes (A_i \otimes E_{k,k})\big).
\end{align*}

Let $S \in \Mat_{e,e}^{md^2}$ given by $S_{i,j,k} = \delta_{j,k} T_i$. We can take $g = f_S$. 

We now show $(2) \implies (1)$. Indeed, we can choose $g = f_S$ for some $S \in \Mat_{e,e}^{md^2}$, $e \geq 1$. We have
\begin{align*}
f_S(A^{[d]}) &=\textstyle \det\big(\sum_{i,j,k}  S_{i,j,k} \otimes (A^{[d]})_{i,j,k}\big)  \\
& =\textstyle \det \big(\sum_{i,j,k} S_{i,j,k} \otimes A_i \otimes E_{j,k}\big) \\
& =\textstyle \det \big( \sum_i \left(\sum_{j,k} S_{i,j,k} \otimes E_{j,k} \right) \otimes A_i\big) \\
& =\textstyle \det \big( \sum_i \widetilde{S}_i \otimes A_i \big),
\end{align*}

where $\widetilde{S}_i = \sum_{j,k} S_{i,j,k} \otimes E_{j,k}$. Let $\widetilde{S} = (\widetilde{S}_1,\dots,\widetilde{S}_m) \in \Mat_{de,de}^m$. Then the above calculation tells us that $f_{\widetilde{S}}(A) = f_S(A^{[d]}) = g(A^{[d]})$. Hence we have $$
f_{\widetilde{S}} (A) = g(A^{[d]}) \neq g(B^{[d]}) = f_{\widetilde{S}} (B).
$$
We can take $f = f_{\widetilde{S}}$.

\end{proof}

\begin{corollary} \label{cor:n-1,n}
The orbit closures of $A$ and $B$ do not intersect if and only if the orbit closures of $A^{[d]}$ and $B^{[d]}$ do not intersect for at least one choice of $d \in \{n-1,n\}$.
\end{corollary}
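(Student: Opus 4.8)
The plan is to deduce Corollary~\ref{cor:n-1,n} directly from the preceding Proposition together with Theorem~\ref{first}. Since we are working with the left-right action of the reductive group $\SL_n \times \SL_n$ on $\Mat_{n,n}^m$, Theorem~\ref{first} tells us that orbit closures of $A$ and $B$ intersect if and only if $f(A) = f(B)$ for every $f \in R(n,m)$. Likewise, orbit closures of $A^{[d]}$ and $B^{[d]}$ (which live in $\Mat_{nd,nd}^{md^2}$, where $\SL_{nd} \times \SL_{nd}$ acts by the left-right action) intersect if and only if $g(A^{[d]}) = g(B^{[d]})$ for every $g \in R(nd,md^2)$.

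First I would restate the conclusion of the Proposition in contrapositive form: condition $(1)$ fails (that is, $f(A) = f(B)$ for all $f \in R(n,m)$) if and only if condition $(2)$ fails for \emph{both} choices of $d$, i.e.\ $g(A^{[d]}) = g(B^{[d]})$ for all $g \in R(nd,md^2)$ for both $d = n-1$ and $d = n$. Unwinding the logic of the Proposition's statement, "(1) or (2)" being the disjunction, the negation is "$\neg(1)$ and $\neg(2)$"; and the Proposition asserts $(1) \Leftrightarrow (2)$ where $(2)$ is an existential over $d \in \{n-1, n\}$, so the negation of $(2)$ is the statement that separation fails for both values of $d$.

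Then I would translate each side through Theorem~\ref{first}. The statement "$f(A) = f(B)$ for all $f \in R(n,m)$" is equivalent to $\overline{(\SL_n\times\SL_n)\cdot A} \cap \overline{(\SL_n\times\SL_n)\cdot B} \neq \emptyset$, i.e.\ $A \sim_{LR} B$. Similarly, for a fixed $d$, "$g(A^{[d]}) = g(B^{[d]})$ for all $g \in R(nd,md^2)$" is equivalent to the orbit closures of $A^{[d]}$ and $B^{[d]}$ intersecting. Combining: $A \sim_{LR} B$ if and only if the orbit closures of $A^{[d]}$ and $B^{[d]}$ intersect for both $d \in \{n-1,n\}$. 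Contrapositive: the orbit closures of $A$ and $B$ do not intersect if and only if, for at least one $d \in \{n-1,n\}$, the orbit closures of $A^{[d]}$ and $B^{[d]}$ do not intersect. This is exactly the claim.

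I do not expect a genuine obstacle here — the corollary is a formal consequence of the Proposition and Theorem~\ref{first}. The only point requiring a little care is bookkeeping the quantifier over $d$: the Proposition bundles the two choices of $d$ into a single existential in condition $(2)$, so one must be careful that the negation distributes as "fails for $d=n-1$ \emph{and} fails for $d=n$", which after contraposition becomes "holds for $d=n-1$ \emph{or} holds for $d=n$" on the separated side. I would write one sentence making this explicit to avoid any ambiguity, and otherwise the proof is a two-line application of the Proposition.
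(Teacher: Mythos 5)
Your proof is correct and uses the same ingredients the paper implicitly relies on: Theorem~\ref{first} (to translate between the existence of a separating invariant in $R(n,m)$ or $R(nd,md^2)$ and non-intersection of orbit closures) combined with the preceding Proposition. The double-negation/contrapositive detour is unnecessary --- one can read the biconditional directly: ``$\exists f \in R(n,m)$ separating $A,B$'' is, by Theorem~\ref{first}, exactly ``$\overline{G\cdot A}\cap\overline{G\cdot B}=\emptyset$,'' and ``$\exists g$ separating $A^{[d]},B^{[d]}$ for some $d\in\{n-1,n\}$'' is exactly ``for at least one such $d$ the closures do not intersect'' --- but the bookkeeping you do is sound and lands in the same place. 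Aside from the slightly garbled sentence about ``(1) or (2) being the disjunction'' (the disjunction you actually negate is the one over $d$ inside condition~(2), not a disjunction of~(1) and~(2)), this matches the intended one-line deduction.
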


\subsection{Commuting action of another group} \label{commuting.action}
Let $G$ be a group acting on $V$. Suppose we have another group $H$ acting on $V$, and the actions of $G$ and $H$ commute.
To distinguish the actions, we will denote the action of $H$ by $\star$.  The orbit closure problem for the action of $G$ on $V$ also commutes with the action of $H$. More precisely, we have the following: 

\begin{lemma} \label{commuting}
Let $v,w \in V$ and $h \in H$. Then $v \sim w$ if and only if $h \star v \sim h \star w$.
\end{lemma}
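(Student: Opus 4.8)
The statement is an equivalence, and by symmetry it suffices to prove one direction: if $v \sim w$ then $h \star v \sim h \star w$ for every $h \in H$. The reverse implication then follows by applying this to $h^{-1}$, $h \star v$, $h \star w$. So the plan is to show that applying $h$ to both points preserves the property that their $G$-orbit closures intersect.

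The key idea is that the $H$-action, being an action by linear (hence continuous) automorphisms of $V$ that commute with the $G$-action, carries $G$-orbits to $G$-orbits and $G$-orbit closures to $G$-orbit closures. Concretely: first observe $h \star (G \cdot v) = G \cdot (h \star v)$, which is immediate from commutativity, since $h \star (g \cdot v) = g \cdot (h \star v)$ for all $g \in G$. Next, since $h \star (-) \colon V \to V$ is an invertible linear map, it is a homeomorphism for the Zariski topology, so it commutes with taking closures: $h \star \overline{G \cdot v} = \overline{h \star (G \cdot v)} = \overline{G \cdot (h \star v)}$. Therefore
\[
h \star \left( \overline{G \cdot v} \cap \overline{G \cdot w} \right) = \left( h \star \overline{G \cdot v} \right) \cap \left( h \star \overline{G \cdot w} \right) = \overline{G \cdot (h \star v)} \cap \overline{G \cdot (h \star w)},
\]
where the first equality uses that $h \star (-)$ is a bijection. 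Since $h \star (-)$ is a bijection, the left-hand set is nonempty if and only if $\overline{G \cdot v} \cap \overline{G \cdot w}$ is nonempty, which proves $v \sim w \iff h \star v \sim h \star w$.

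I do not expect a serious obstacle here; the proof is essentially the observation that a commuting invertible linear action is a homeomorphism. The only point that deserves a word of care is the justification that $h$ acting on $V$ is a morphism of varieties (so that it and its inverse are continuous for the Zariski topology) — but this is built into the hypothesis that $H$ acts on $V$ by linear transformations. One could alternatively phrase the argument invariant-theoretically via Theorem~\ref{first}: if $G$ is reductive one has $v \sim w$ iff $f(v) = f(w)$ for all $f \in K[V]^G$, and $f \mapsto f \circ (h^{-1} \star -)$ permutes $K[V]^G$ because the actions commute; but the topological argument above is cleaner and does not require reductivity, so that is the route I would take.
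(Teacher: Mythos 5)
Your proof is correct; the paper states Lemma~\ref{commuting} without proof, treating it as immediate. Your topological argument — that $h\star(-)$ is a Zariski homeomorphism commuting with the $G$-action, hence carries $\overline{G\cdot v}\cap\overline{G\cdot w}$ bijectively onto $\overline{G\cdot(h\star v)}\cap\overline{G\cdot(h\star w)}$ — is exactly the right way to fill this in, and you are right that it avoids any appeal to reductivity or to Theorem~\ref{first}.
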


We have a natural identification of $V = \Mat_{n,n}^m$ with $\Mat_{n,n} \otimes K^m$. The latter viewpoint illuminates an action of $\GL_m$ on $V$ that commutes with the left-right action of $\SL_n \times \SL_n$, as well as the simultaneous conjugation action of $\GL_n$. In explicit terms, for $P = (p_{i,j}) \in \GL_m$ and $X = (X_1,\dots,X_m) \in \Mat_{n,n}^m$, we have  
$$
P\star (X_1,\dots,X_m) =\textstyle\big(\sum_ j p_{1,j} X_j, \sum_j p_{2,j} X_j, \dots, \sum_j p_{m,j} X_j\big).
$$

\begin{corollary}
The orbit closure problem for both the left-right action of $\SL_n \times \SL_n$ and the simultaneous conjugation action of $\GL_n$ on $\Mat_{n,n}^m$ commutes with the action of $\GL_m$.  
\end{corollary}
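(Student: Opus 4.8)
The plan is to deduce this immediately from Lemma~\ref{commuting}, once its hypothesis is verified. Recall from the discussion in Section~\ref{commuting.action} that the assertion ``the orbit closure problem for the $G$-action commutes with the $H$-action'' means precisely that $v \sim w \iff h \star v \sim h \star w$ for all $v,w \in V$ and all $h \in H$, and that this is exactly what Lemma~\ref{commuting} delivers whenever the $G$- and $H$-actions commute. So it suffices to verify that the $\GL_m$-action on $\Mat_{n,n}^m$ given in the displayed formula just above the corollary commutes with the left-right action of $\SL_n \times \SL_n$, and with the simultaneous conjugation action of $\GL_n$.

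First I would check the left-right case. Fix $P = (p_{i,j}) \in \GL_m$, a pair $(g,h) \in \SL_n \times \SL_n$, and $X = (X_1,\dots,X_m) \in \Mat_{n,n}^m$. The $i$-th coordinate of $(g,h)\cdot(P \star X)$ is $g\big(\sum_j p_{i,j} X_j\big)h^{-1} = \sum_j p_{i,j}\,(g X_j h^{-1})$, which is exactly the $i$-th coordinate of $P \star \big((g,h)\cdot X\big)$; hence the two actions commute. The simultaneous conjugation case is the special case $g = h$: the $i$-th coordinate of $g \cdot (P \star X)$ is $g\big(\sum_j p_{i,j} X_j\big)g^{-1} = \sum_j p_{i,j}\,(g X_j g^{-1})$, the $i$-th coordinate of $P \star (g \cdot X)$.

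With the commutation in hand, Lemma~\ref{commuting} applied with $H = \GL_m$ and $G = \SL_n \times \SL_n$ (respectively $G = \GL_n$) yields $v \sim_{LR} w \iff P \star v \sim_{LR} P \star w$ (respectively $v \sim_C w \iff P \star v \sim_C P \star w$) for all $P \in \GL_m$, which is the claim. I expect no genuine obstacle here: the only point requiring care is that the $\GL_m$-action really is the coordinate-mixing action and that it commutes on the nose with the matrix-side actions, which the one-line computations above confirm; all the substance sits in Lemma~\ref{commuting}, which we may assume.
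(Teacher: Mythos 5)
Your proof is correct and matches the paper's (implicit) argument: the corollary is stated immediately after the explicit formula for the $\GL_m$-action and Lemma~\ref{commuting}, and the intended justification is precisely the one-line verification that the $\GL_m$-action commutes with both matrix-side actions followed by an application of that lemma. No gap here.
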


\subsection{A useful surjection} \label{useful.surjection}
We consider the map 
\begin{align*}
\phi :  \Mat_{n,n}^m &   \longrightarrow  \Mat_{n,n}^{m+1} \\
 (X_1,\dots,X_m) & \longmapsto  ({\rm Id},X_1,\dots,X_m)
\end{align*}

This gives a surjection on the coordinate rings $\phi^*: K[\Mat_{n,n}^{m+1}] \rightarrow K[\Mat_{n,n}^m]$, which descends to a surjective map on invariant rings as below (see \cite{Dom00b, DM-arbchar}).  

\begin{proposition} [\cite{Dom00b}] \label{domokos-surjection} 
The map $\phi^* : R(n,m+1) \twoheadrightarrow S(n,m)$ is surjective.
\end{proposition}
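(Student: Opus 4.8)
The plan is to show first that $\phi^*$ sends $R(n,m+1)$ into $S(n,m)$, and then to prove surjectivity by exhibiting, for each word $w\in[m]^\star$ and each $1\le \ell\le n$, an element of $R(n,m+1)$ mapping under $\phi^*$ to the Donkin generator $\sigma_{\ell,w}\in S(n,m)$. Since $\phi^*$ is a $K$-algebra homomorphism and, by Theorem~\ref{donkin-gen}, the $\sigma_{\ell,w}$ generate $S(n,m)$, its image will then have to be all of $S(n,m)$.

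For the first step I would use that $R(n,m+1)$ is spanned by the $f_T$ with $T\in\Mat_{d,d}^{m+1}$, together with the identity $f_T(gY_1g^{-1},\dots,gY_{m+1}g^{-1})=\det\bigl(({\rm Id}_d\otimes g)\bigl(\textstyle\sum_iT_i\otimes Y_i\bigr)({\rm Id}_d\otimes g)^{-1}\bigr)=f_T(Y_1,\dots,Y_{m+1})$, so that every element of $R(n,m+1)$ is invariant under simultaneous conjugation by $\GL_n$ on $\Mat_{n,n}^{m+1}$. Since $\phi$ is $\GL_n$-equivariant for the conjugation actions (it lands in the conjugation-stable locus $\{Y_1={\rm Id}\}$), restriction along $\phi$ of a conjugation invariant is again a conjugation invariant; hence $\phi^*(R(n,m+1))\subseteq S(n,m)$, and $\phi^*$ restricts to a $K$-algebra homomorphism $R(n,m+1)\to S(n,m)$ whose image is a subalgebra.

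The core is an explicit determinantal construction. Fix $w=j_1\cdots j_k$ with $k\ge1$ (constants, such as $\sigma_{0,w}=1$ and the invariants coming from $w=\epsilon$, are automatically in the image). Let $N(t)$ be the $k\times k$ block matrix of $n\times n$ blocks with ${\rm Id}_n$ on the block diagonal, $-X_{j_a}$ in block position $(a,a+1)$ for $1\le a\le k-1$, $-t\,X_{j_k}$ in block position $(k,1)$, and $0$ elsewhere. A Schur-complement reduction (by induction on $k$) gives $\det N(t)=\det({\rm Id}_n - t\,X_w)=\sum_{j=0}^n(-1)^j t^j\,\sigma_{j,w}(X)$; the cyclic invariance $\sigma_j(AB)=\sigma_j(BA)$ of characteristic coefficients makes this identity insensitive to the cyclic order in which the factors of $X_w$ are produced by the reduction. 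On the other hand $N(t)={\rm Id}_k\otimes{\rm Id}_n+\sum_{i=1}^m A_i(t)\otimes X_i$, where $A_i(t)$ has the entry $-1$ in position $(a,a+1)$ for each $a\le k-1$ with $j_a=i$, the entry $-t$ in position $(k,1)$ if $j_k=i$, and $0$ otherwise; in particular $A_i(t)$ depends linearly on $t$. Setting $T(t)=({\rm Id}_k,A_1(t),\dots,A_m(t))\in\Mat_{k,k}^{m+1}$, we obtain $\phi^*\bigl(f_{T(t)}\bigr)(X)=f_{T(t)}({\rm Id}_n,X_1,\dots,X_m)=\det N(t)$.

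To conclude, expand $f_{T(t)}=\sum_{p}t^p\,g_p$ with $g_p\in K[\Mat_{n,n}^{m+1}]$. For every scalar $t_0\in K$ the tuple $T(t_0)$ has entries in $K$, hence $f_{T(t_0)}\in R(n,m+1)$; since $K$ is infinite, evaluating at enough distinct $t_0$ and inverting a Vandermonde matrix expresses each $g_p$ as a $K$-linear combination of such $f_{T(t_0)}$, so $g_p\in R(n,m+1)$. Comparing $\phi^*(f_{T(t)})=\sum_p t^p\,\phi^*(g_p)$ with $\det N(t)=\sum_{j=0}^n(-1)^jt^j\sigma_{j,w}$ yields $\phi^*(g_j)=(-1)^j\sigma_{j,w}$ for $0\le j\le n$; thus $\sigma_{\ell,w}$ lies in the image of $\phi^*$ for all $w$ and all $1\le\ell\le n$, which finishes the proof. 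The step I expect to be the main obstacle is purely computational: pinning down the block-determinant identity $\det N(t)=\det({\rm Id}_n-tX_w)$ and checking that the $t$-adic/Vandermonde argument really promotes each coefficient $g_p$ from $K[\Mat_{n,n}^{m+1}]$ into the invariant ring $R(n,m+1)$.
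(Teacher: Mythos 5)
Your proof is correct, but it takes a genuinely different route from the paper's. The paper does not go through a generating set at all: given an arbitrary $f \in S(n,m)$, it directly writes down a preimage $\widetilde f \in R(n,m+1)$ via the adjugate,
$$
\widetilde f(X_1,\dots,X_{m+1}) = f\bigl(\Ad(X_1)X_2,\dots,\Ad(X_1)X_{m+1}\bigr),
$$
and verifies $\SL_n\times\SL_n$-invariance of $\widetilde f$ using the identity $\Ad(PXQ^{-1})\cdot(PYQ^{-1}) = Q\,(\Ad(X)Y)\,Q^{-1}$ for $P,Q\in\SL_n$. Your argument instead invokes Donkin's generating set $\{\sigma_{\ell,w}\}$ and builds, for each word $w$, a companion-style block matrix $N(t)$ with $\det N(t)=\det({\rm Id}_n - tX_w)$, reads off preimages of all $\sigma_{\ell,w}$ as $t$-coefficients, and uses the Vandermonde/infinite-field trick to place those coefficients back in $R(n,m+1)$. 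Both arguments work. What the paper's version buys is a \emph{closed-form preimage of any invariant}, not just of generators; this is recorded as a corollary immediately after the proof and is what makes the reduction in Section~\ref{Section reduction} algorithmic (given a separating $f\in S(n,m)$, one can write down a separating $\widetilde f\in R(n,m+1)$ without first re-expressing $f$ in Donkin's generators). What your version buys is an explicit determinantal realization of the $\sigma_{\ell,w}$ as semi-invariants $f_T$ with concrete block-companion coefficient matrices $T$, which is nice but not needed elsewhere in the paper. Your version also leans on Donkin's theorem as an external input, while the paper's is self-contained; and your check of the containment $\phi^*(R(n,m+1))\subseteq S(n,m)$ via $f_T(gY_1g^{-1},\dots)=f_T(Y_1,\dots)$ is a clean alternative to the paper's $\lambda g\in\SL_n$ rescaling trick.

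One small point to tighten: for $k=1$ the two block positions $(a,a)$ and $(k,1)$ coincide, so you should state separately that $N(t)={\rm Id}_n - tX_{j_1}$; and in the Schur-complement reduction the product that appears is a cyclic rotation $X_{j_k}X_{j_1}\cdots X_{j_{k-1}}$ of $X_w$, so the appeal to $\sigma_j(AB)=\sigma_j(BA)$ is indeed essential and worth making explicit, as you do.
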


We recall the proof of this proposition because the construction in the proof plays a significant role in some of the algorithms below. Before proving the proposition, let us recall some basic linear algebra. For a matrix $X \in \Mat_{n,n}$, let us denote the adjoint (or adjugate) matrix by $\Ad(X)$.

\begin{lemma} \label{adjoint-prop}
Let $X,Y \in \Mat_{n,n}$. Then we have:
\begin{enumerate}
\item $\Ad(XY) = \Ad(Y) \Ad(X)$. 
\item $X \Ad(X) = \det(X)  {\rm Id}$. In particular, if $\det(X) = 1$, then $\Ad(X) = X^{-1}$.
\item For $(P,Q) \in \SL_n \times \SL_n$, we have $\Ad(PXQ^{-1})(PYQ^{-1}) = Q (\Ad(X) Y) Q^{-1}$. 
\end{enumerate}
\end{lemma}

\begin{proof}
The first two are well known. The last one follows from the first two.
\end{proof}

\begin{proof} [Proof of Proposition~\ref{domokos-surjection}]
We want to first show that we have an inclusion $\phi^*(R(n,m+1)) \subseteq S(n,m)$. 

Indeed for $f \in R(n,m+1)$ and $g \in \GL_n$, we have

\begin{align*}
\phi^*(f) (gX_1g^{-1},\dots,gX_mg^{-1}) & = f({\rm Id},gX_1g^{-1},\dots,gX_mg^{-1}) \\
& = f(g{\rm Id}g^{-1},gX_1g^{-1},\dots,gX_mg^{-1}) \\
 & = f ({\rm Id},X_1,\dots,X_m)\\
 & = \phi^*(f) (X_1,\dots,X_m).
\end{align*}

The third equality is the only non-trivial one. Even though $g$ may not be in $\SL_n$, we can replace $g$ by $g' = \lambda g \in \SL_n$ for a suitable $\lambda \in K^*$. Then, one has to observe that conjugation by $g$ and conjugation by $g'$ are the same. 

Now, we show that the image of $\phi^*$ surjects onto $S(n,m)$. For $f \in S(n,m)$, define $\widetilde{f}$ by 
$$
\widetilde{f} (X_1,\dots,X_{m+1}) = f(\Ad(X_1)X_2,\Ad(X_1)X_3,\dots,\Ad(X_1)X_{m+1}).
$$
We claim that $\widetilde{f}$ is invariant w.r.t the left-right action of $\SL_n \times \SL_n$. Indeed for $(P,Q) \in \SL_n \times \SL_n$, we have

\begin{align*}
\widetilde{f} (PX_1Q^{-1},\dots,PX_{m+1}Q^{-1}) &= f(\Ad(PX_1Q^{-1})PX_2Q^{-1},\dots,\Ad(PX_1Q^{-1})PX_{m+1}Q^{-1}) \\
& = f(Q(\Ad(X_1)X_2)Q^{-1},\dots,Q(\Ad(X_1)X_{m+1})Q^{-1}) \\
& = f(\Ad(X_1)X_2,\dots,\Ad(X_1)X_{m+1}) \\
& = \widetilde{f}(X_1,\dots,X_{m+1}).
\end{align*}

The second equality follows from the above lemma, and the third follows because $f$ is invariant under simultaneous conjugation.

Further, we have 
\begin{align*}
(\phi^*(\widetilde{f}))(X_1,\dots,X_m) & = \widetilde{f}({\rm Id},X_1,\dots,X_m)\\
& = f(\Ad({\rm Id})X_1,\dots, \Ad({\rm Id})X_m) \\
& = f(X_1,\dots,X_m)
\end{align*}

Hence for each $f \in S(n,m)$, we have constructed a preimage $\widetilde{f} \in R(n,m+1)$. Thus $\phi^*$ is a surjection from $R(n,m+1)$ onto $S(n,m)$.

\end{proof}

In fact, from the above proof, we can see that for $f \in S(n,m)$, we can construct a pre-image easily. We record this as a corollary.

\begin{corollary} [\cite{Dom00b}]
For $f \in S(n,m)$, the invariant polynomial $\widetilde{f} \in R(n,m+1)$ defined by 
$$
\widetilde{f} (X_1,\dots,X_{m+1}) = f(\Ad(X_1)X_2,\Ad(X_1)X_3,\dots,\Ad(X_1)X_{m+1})
$$
is a pre-image of $f$ under $\phi^*$, i.e., $\phi^*(\widetilde{f}) = f$.

\end{corollary}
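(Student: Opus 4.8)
The plan is to observe that this corollary merely isolates the explicit construction already carried out inside the proof of Proposition~\ref{domokos-surjection}, so the proof reduces to two short verifications: that $\widetilde{f}$ is a genuine element of $R(n,m+1)$, and that applying $\phi^*$ to it returns $f$.

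First I would check that $\widetilde{f}$ is a polynomial invariant for the left-right action of $\SL_n \times \SL_n$. That it is a polynomial is immediate, since the entries of $\Ad(X_1)$ are (signed) $(n-1)\times(n-1)$ minors of $X_1$, hence polynomials in the entries of $X_1$, and composing with the polynomial $f$ keeps the result polynomial. For invariance, take $(P,Q) \in \SL_n \times \SL_n$ and compute $\widetilde{f}(PX_1Q^{-1},\dots,PX_{m+1}Q^{-1})$; by Lemma~\ref{adjoint-prop}(3) each argument $\Ad(PX_1Q^{-1})(PX_iQ^{-1})$ equals $Q(\Ad(X_1)X_i)Q^{-1}$, so the whole expression becomes $f\big(Q(\Ad(X_1)X_2)Q^{-1},\dots,Q(\Ad(X_1)X_{m+1})Q^{-1}\big)$. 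Since $Q \in \SL_n \subseteq \GL_n$, invariance of $f$ under simultaneous conjugation lets us drop the conjugation by $Q$, recovering $\widetilde{f}(X_1,\dots,X_{m+1})$. Unlike in the proof of the inclusion $\phi^*(R(n,m+1)) \subseteq S(n,m)$, no rescaling is needed here, because the conjugating matrix $Q$ already lies in $\SL_n$.

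Second I would verify $\phi^*(\widetilde{f}) = f$ by unwinding the definitions: $\phi^*(\widetilde{f})(X_1,\dots,X_m) = \widetilde{f}({\rm Id},X_1,\dots,X_m) = f(\Ad({\rm Id})X_1,\dots,\Ad({\rm Id})X_m)$, and since $\Ad({\rm Id}) = {\rm Id}$ (a special case of Lemma~\ref{adjoint-prop}(2) with $\det = 1$), this equals $f(X_1,\dots,X_m)$. This gives $\phi^*(\widetilde{f}) = f$, as claimed.

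There is essentially no obstacle here: the statement is a direct bookkeeping consequence of the two computations in the proof of Proposition~\ref{domokos-surjection}, and the only point worth flagging — the $\SL$-versus-$\GL$ discrepancy that required a rescaling trick elsewhere — does not arise in either verification above. The substance of the corollary is that the preimage admits an explicit closed formula, which is precisely what makes it usable in the algorithms later in the paper.
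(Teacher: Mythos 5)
Your proof is correct and follows exactly the computation the paper already carries out inside the proof of Proposition~\ref{domokos-surjection}; the paper presents this corollary as a direct byproduct of that proof rather than giving a separate argument, so your two verifications (invariance via Lemma~\ref{adjoint-prop}(3), then evaluation at $({\rm Id},X_1,\dots,X_m)$ using $\Ad({\rm Id})={\rm Id}$) match the paper's reasoning precisely.
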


\section{Time complexity equivalence of orbit closure problems} \label{Section reduction}
In this section, we will show polynomial reductions between the orbit closure problem for matrix invariants and the orbit closure problem for matrix semi-invariants. We will in fact show a more robust reduction. 

Let $G$ be a group acting on $V$.

\begin{definition}
An algorithm for the {\it orbit closure problem with witness} is an algorithm that decides if $v \sim w$ for any two points $v,w \in V$, and if $v \not\sim w$, provides a witness $f \in K[V]^G$ such that $f(v) \neq f(w)$.
\end{definition}


\subsection{Reduction from matrix invariants to matrix semi-invariants}

Let $A,B \in \Mat_{n,n}^m$. We can consider $\phi(A),\phi(B) \in \Mat_{n,n}^{m+1}$, where $\phi:\Mat_{n,n}^m \rightarrow \Mat_{n,n}^{m+1}$ is the map described in Section~\ref{useful.surjection}. 
\begin{proposition} \label{phi:red}
The following are equivalent:
\begin{enumerate}
\item There exists $f \in S(n,m)$ such that $f(A) \neq f(B)$ 
\item There exists $g \in R(n,m+1)$ such that $g(\phi(A)) \neq g(\phi(B))$.
\end{enumerate}
\end{proposition}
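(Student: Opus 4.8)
The plan is to prove both implications by relating the value of an invariant on $\phi(A)$ or $\phi(B)$ to the value of a closely related invariant on $A$ or $B$, using the explicit pre-image construction from the corollary after Proposition~\ref{domokos-surjection} in one direction, and the restriction map $\phi^*$ in the other.

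For $(1) \implies (2)$, suppose $f \in S(n,m)$ satisfies $f(A) \neq f(B)$. By the corollary after Proposition~\ref{domokos-surjection}, the polynomial $\widetilde{f} \in R(n,m+1)$ defined by $\widetilde{f}(X_1,\dots,X_{m+1}) = f(\Ad(X_1)X_2,\dots,\Ad(X_1)X_{m+1})$ is a genuine matrix semi-invariant, and $\phi^*(\widetilde f) = f$. Taking $g = \widetilde f$, we compute directly
\[
g(\phi(A)) = \widetilde{f}({\rm Id},A_1,\dots,A_m) = f(\Ad({\rm Id})A_1,\dots,\Ad({\rm Id})A_m) = f(A),
\]
and similarly $g(\phi(B)) = f(B)$, so $g(\phi(A)) = f(A) \neq f(B) = g(\phi(B))$. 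This direction is essentially immediate from the material already developed.

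For $(2) \implies (1)$, suppose $g \in R(n,m+1)$ satisfies $g(\phi(A)) \neq g(\phi(B))$. Set $f = \phi^*(g)$. By Proposition~\ref{domokos-surjection}, $\phi^*$ maps $R(n,m+1)$ into $S(n,m)$, so $f \in S(n,m)$. By definition of $\phi^*$ and $\phi$, for any $X = (X_1,\dots,X_m) \in \Mat_{n,n}^m$ we have $f(X) = (\phi^*g)(X) = g(\phi(X)) = g({\rm Id},X_1,\dots,X_m)$. Applying this with $X = A$ and $X = B$ gives $f(A) = g(\phi(A)) \neq g(\phi(B)) = f(B)$, as desired.

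The argument is short and relies entirely on results already in the excerpt; there is no substantive obstacle. The only point requiring a little care — and the closest thing to a ``main step'' — is making sure that the pre-image $\widetilde f$ produced in the first direction is actually $\SL_n \times \SL_n$-invariant and not merely a polynomial that restricts correctly; but this is exactly the content of the corollary following Proposition~\ref{domokos-surjection}, so we may invoke it. One could alternatively phrase both directions uniformly by noting that $f \mapsto \phi^*(f)$ and $f \mapsto \widetilde f$ are mutually compatible in the sense $\phi^*(\widetilde f) = f$, and that $g \mapsto g \circ \phi$ intertwines evaluation at $\phi(A)$ (resp. $\phi(B)$) with evaluation of the restriction at $A$ (resp. $B$); I would keep the two-implication format for clarity.
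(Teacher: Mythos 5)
Your proposal is correct and follows essentially the same route as the paper: both directions reduce to the surjection $\phi^* : R(n,m+1) \twoheadrightarrow S(n,m)$ from Proposition~\ref{domokos-surjection}, with $(1)\Rightarrow(2)$ lifting $f$ along $\phi^*$ and $(2)\Rightarrow(1)$ pushing $g$ forward by $\phi^*$. The only cosmetic difference is that you make the choice of preimage explicit via $\widetilde f$ from the corollary, whereas the paper simply invokes surjectivity and picks an arbitrary preimage $g$ with $\phi^*(g)=f$; the computation $g(\phi(A)) = \phi^*(g)(A) = f(A)$ is identical in substance.
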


\begin{proof}
Recall the surjection $\phi^*:R(n,m+1) \twoheadrightarrow S(n,m)$ from Proposition~\ref{domokos-surjection}.
Let's first prove $(1) \implies (2)$. Given $f \in S(n,m)$ such that $f(A) \neq f(B)$, take $g$ to be a preimage of $f$, i.e., $\phi^*(g) = f$. Now, 
$$
g(\phi(A)) = \phi^*(g)(A) = f(A) \neq f(B) = \phi^*(g)(B) = g(\phi(B)).
$$

To prove $(2) \implies (1)$, simply take $f = \phi^*(g)$.
\end{proof}

\begin{corollary} \label{SCtoLR}
Let $A,B \in \Mat_{n,n}^m$. Then we have 
$$
A \sim_{C} B \text{ if and only if } \phi(A) \sim_{LR} \phi(B).
$$
\end{corollary}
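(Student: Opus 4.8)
The plan is to deduce Corollary~\ref{SCtoLR} directly from Proposition~\ref{phi:red} together with Theorem~\ref{first}, exploiting the fact that both the simultaneous conjugation action and the left-right action are actions of reductive groups, so that closure equivalence is detected exactly by invariants.

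\medskip

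\emph{Setup.} Recall from Theorem~\ref{first} that for a rational representation of a reductive group, two points $v,w$ satisfy $v\sim w$ if and only if $f(v)=f(w)$ for every invariant $f$. The group $\GL_n$ acting by simultaneous conjugation on $\Mat_{n,n}^m$ is reductive, so $A\sim_C B$ if and only if $f(A)=f(B)$ for all $f\in S(n,m)$; equivalently, $A\not\sim_C B$ iff there exists $f\in S(n,m)$ with $f(A)\neq f(B)$. Likewise $\SL_n\times\SL_n$ is reductive (here one should remark that although $\SL_n$ is semisimple rather than a torus-containing group like $\GL_n$, it is still linearly reductive in characteristic $0$ and geometrically reductive in general, and Theorem~\ref{first} as stated applies to reductive $G$), so $\phi(A)\not\sim_{LR}\phi(B)$ iff there exists $g\in R(n,m+1)$ with $g(\phi(A))\neq g(\phi(B))$.

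\medskip

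\emph{Main argument.} With these two reformulations in hand, the corollary is immediate: by Proposition~\ref{phi:red}, the existence of a separating $f\in S(n,m)$ for the pair $(A,B)$ is equivalent to the existence of a separating $g\in R(n,m+1)$ for the pair $(\phi(A),\phi(B))$. Negating both sides, $A\sim_C B$ iff $\phi(A)\sim_{LR}\phi(B)$. Concretely:
\begin{align*}
A\not\sim_C B &\iff \exists f\in S(n,m):\ f(A)\neq f(B) \\
&\iff \exists g\in R(n,m+1):\ g(\phi(A))\neq g(\phi(B)) \\
&\iff \phi(A)\not\sim_{LR}\phi(B),
\end{align*}
where the first and third equivalences are Theorem~\ref{first} applied to the two actions, and the middle one is Proposition~\ref{phi:red}. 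This is the entire proof.

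\medskip

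\emph{Expected obstacle.} There is essentially no computational difficulty; the only point requiring care is the invocation of Theorem~\ref{first} for the $\SL_n\times\SL_n$ action in arbitrary characteristic — one must be sure the "reductive" hypothesis is read in the geometric/linearly-reductive sense that makes the separation statement valid over an algebraically closed field of positive characteristic (this is exactly the content of the classical GIT result cited as \cite{GIT}, valid for geometrically reductive groups, and $\SL_n$ is geometrically reductive by Haboush's theorem \cite{Haboush}). Once that is granted, the corollary is a one-line consequence of Proposition~\ref{phi:red}. Alternatively, and perhaps more cleanly, one can avoid even mentioning Theorem~\ref{first} a second time by noting it was already invoked to define $\sim$ in terms of invariants right after its statement, so the proof reduces to: "By the characterization of $\sim$ via invariants, this is just Proposition~\ref{phi:red}."
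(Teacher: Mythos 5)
Your proof is correct and is exactly the intended derivation: the paper states Corollary~\ref{SCtoLR} immediately after Proposition~\ref{phi:red} with no separate argument, relying on the reader to combine Proposition~\ref{phi:red} with the characterization of $\sim$ via invariants from Theorem~\ref{first}, which is what you do. Your aside about geometric reductivity of $\SL_n\times\SL_n$ in positive characteristic is a correct and sensible sanity check, though the paper takes it for granted.
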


\begin{corollary}
There is a polynomial reduction that reduces the orbit closure problem with witness for matrix invariants to the orbit closure problem with witness for matrix semi-invariants
\end{corollary}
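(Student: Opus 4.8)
The plan is to chain together the two statements just established: Proposition~\ref{phi:red}, which says a matrix invariant separates $A$ and $B$ exactly when a matrix semi-invariant separates $\phi(A)$ and $\phi(B)$, and Corollary~\ref{SCtoLR}, which is the closure-equivalence shadow of that. The reduction map is essentially $\phi$ itself, together with the explicit preimage formula for $\phi^*$ recorded after Proposition~\ref{domokos-surjection}.

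Concretely, given an instance $A,B \in \Mat_{n,n}^m$ of the orbit closure problem with witness for matrix invariants, first I would form $\phi(A) = ({\rm Id},A_1,\dots,A_m)$ and $\phi(B) = ({\rm Id},B_1,\dots,B_m)$ in $\Mat_{n,n}^{m+1}$; this costs $O(mn^2)$ operations, and the number of matrix slots grows only from $m$ to $m+1$, so any polynomial-time algorithm on the semi-invariant side remains polynomial-time in the original parameters. Then I would run the assumed algorithm for the orbit closure problem with witness for matrix semi-invariants on the pair $(\phi(A),\phi(B))$. If it reports $\phi(A) \sim_{LR} \phi(B)$, then Corollary~\ref{SCtoLR} gives $A \sim_C B$ and we output ``closure equivalent''. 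If instead it reports $\phi(A) \not\sim_{LR} \phi(B)$ together with a witness $g \in R(n,m+1)$ satisfying $g(\phi(A)) \neq g(\phi(B))$, then Corollary~\ref{SCtoLR} gives $A \not\sim_C B$, and I would output the matrix invariant $f := \phi^*(g) \in S(n,m)$, i.e.\ $g$ with its first matrix argument specialized to ${\rm Id}$. By the short computation inside the proof of Proposition~\ref{phi:red},
$$
f(A) = \phi^*(g)(A) = g(\phi(A)) \neq g(\phi(B)) = \phi^*(g)(B) = f(B),
$$
so $f$ is a valid witness, and correctness of the reduction follows.

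The only point that needs care — and the part I expect to be the main (mild) obstacle — is checking that the witness conversion $g \mapsto \phi^*(g)$ is itself polynomial-time in whatever representation of invariants is in use. If $g$ is returned in the form $f_T$ for an explicit tuple $T \in \Mat_{d,d}^{m+1}$ with $d = {\rm poly}(n)$, then $\phi^*(f_T)(X_1,\dots,X_m) = \det\big(T_1 \otimes {\rm Id} + \sum_{i=2}^{m+1} T_i \otimes X_{i-1}\big)$ is again an explicit invariant of the same polynomial size, described by the tuple $(T_1 \otimes {\rm Id}, T_2,\dots,T_{m+1})$ of $nd \times nd$ matrices, and it is evaluated by a single determinant of size $nd$; if $g$ is instead presented by a formula or a straight-line program, the substitution $X_1 \mapsto {\rm Id}$ is performed symbolically in linear time. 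In every such model the transformation, and hence the entire reduction, is polynomial, which is precisely the assertion. I would also note in passing that this corollary, combined with the reverse reduction given elsewhere in this section, yields the full polynomial-time equivalence of the two orbit closure problems.
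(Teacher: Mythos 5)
Your proposal matches the paper's proof: form $\phi(A),\phi(B)$, invoke the semi-invariant oracle, and pull back the witness along $\phi^*$ using Corollary~\ref{SCtoLR} and Proposition~\ref{phi:red}. The extra discussion about representing $\phi^*(g)$ is reasonable (though the paper simply asserts the reduction is ``clearly polynomial time''); note only that your parenthetical ``tuple $(T_1 \otimes {\rm Id}, T_2,\dots,T_{m+1})$ of $nd\times nd$ matrices'' has a dimension slip, since $T_2,\dots,T_{m+1}$ are $d\times d$, but this does not affect the argument.
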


\begin{proof}
Given $A,B \in \Mat_{n,n}^m$, we construct $\phi(A)$ and $\phi(B)$. Appeal to the orbit closure problem with witness for matrix semi-invariants with input $\phi(A)$ and $\phi(B)$. There are two possible outcomes. If $\phi(A) \sim_{LR} \phi(B)$, then we conclude that $A \sim_{C} B$. If $\phi(A) \not\sim_{LR} \phi(B)$ and $f \in R(n,m+1)$ separates $\phi(A)$ and $\phi(B)$, then $\phi^*(f)$ is an invariant that separates $A$ and $B$. The reduction is clearly  polynomial time.
\end{proof}

\subsection{Reduction from matrix semi-invariants to matrix invariants} \label{cce-msi-mi}

We will show that the orbit closure problem for matrix semi-invariants can be reduced to the orbit closure problem for matrix invariants. Let $A,B \in \Mat_{n,n}^m$. Recall the discussion in Section~\ref{msi.intro}, in particular, that if we can find efficiently a non-singular matrix in the span of $A_1,\dots,A_m$, we would be done. We must address the two issues indicated in Section~\ref{msi.intro}. The IQS algorithm (Theorem~\ref{IQS-algo}) can determine whether $A$ is in the null cone for the left-right action. Further, when $A$ is not in the null cone, it constructs efficiently a non-singular matrix of the form $\sum_{i=1}^m T_i \otimes A_i$, with $T_i \in \Mat_{d,d}$ for any $n-1 \leq d < {\rm poly}(n)$. Roughly speaking, these non-singular matrices will address both issues. We will now make precise statements.

\begin{proposition} \label{LR to SC}
Assume $A,B \in \Mat_{n,n}^m$ such that $\det(A_1) = \det(B_1) \neq 0$. If we denote $\widetilde{A} = (A_1^{-1}A_2,\dots,A_1^{-1}A_m)$ and $\widetilde{B} = (B_1^{-1}B_2,\dots,B_1^{-1}B_m)$, then we have 
$$
A \sim_{LR} B \iff  \widetilde{A} \sim_{C} \widetilde{B}.
$$ 
\end{proposition}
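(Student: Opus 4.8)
The plan is to reduce the left-right orbit-closure equivalence of $A$ and $B$ to the simultaneous-conjugation orbit-closure equivalence of the normalized tuples $\widetilde A$ and $\widetilde B$ by exploiting the fact that $A_1$ and $B_1$ are invertible with equal determinant, so we can move into a "gauge" where the first matrix is the identity. First I would set $\lambda \in K^*$ with $\lambda^n = \det(A_1) = \det(B_1)$, and observe that $(\lambda A_1^{-1}, \mathrm{Id}) \in \SL_n \times \SL_n$ — wait, more carefully: I want a pair $(P,Q) \in \SL_n \times \SL_n$ sending $A_1$ to $\mathrm{Id}$. Since $\det(A_1) = \lambda^n$, the matrix $\lambda^{-1} A_1$ lies in $\SL_n$, hence $(P,Q) := ((\lambda^{-1}A_1)^{-1}, \lambda \cdot \mathrm{Id}) = (\lambda A_1^{-1}, \lambda\,\mathrm{Id})$ has $P \in \SL_n$ and $\det Q = \lambda^n = \det(A_1) \ne 1$ in general, so that is not quite in $\SL_n \times \SL_n$ either. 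The clean way is: act by $(P, Q) = (\lambda A_1^{-1},\, \mathrm{Id})$ only if $\det(A_1) = \lambda^n$ with... Actually the right normalization uses that $\det(A_1)=\det(B_1)$: put $g = A_1$, and note $A = (A_1, A_2, \ldots) = (A_1\cdot\mathrm{Id}, A_1 \cdot (A_1^{-1}A_2), \ldots)$, i.e. $A = (A_1, \mathrm{Id})\star'(\mathrm{Id}, \widetilde A)$ under left multiplication by the single invertible matrix $A_1$; similarly for $B$ with $B_1$. So $A \sim_{LR} B$ iff $\overline{\SL_n\times\SL_n \cdot A}$ meets $\overline{\SL_n\times\SL_n\cdot B}$, and I would rewrite these orbit closures in terms of $(\mathrm{Id},\widetilde A)$ and $(\mathrm{Id},\widetilde B)$.

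The key technical point is that left multiplication by a single $g \in \GL_n$ on the first slot does not preserve $\SL_n\times\SL_n$-orbits but does preserve $\SL_n\times\SL_n$-orbit \emph{closures} up to the same $g$, because conjugating the group by $(g,\mathrm{Id})$-type operations is harmless; more precisely I would use the scaling trick already invoked in the proof of Proposition~\ref{domokos-surjection}: if $g\in\GL_n$ and $\mu\in K^*$ with $\mu^n = \det g$, then $(\mu^{-1}g, \mathrm{Id})$ differs from an element of $\SL_n\times\SL_n$ only by the central scalar $\mu^{-1}$, and scalars act on $\Mat_{n,n}^m$ by an action of $\GL_1$ that commutes with the left-right action; combined with Lemma~\ref{commuting} and the fact that $\det(A_1) = \det(B_1)$ so the \emph{same} scalar appears on both sides, the scalar ambiguity cancels. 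Thus $A \sim_{LR} B$ iff $(\mathrm{Id},\widetilde A) \sim_{LR} (\mathrm{Id},\widetilde B)$.

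Finally I would invoke Remark~\ref{relate msi-mi}: two tuples with first entry $\mathrm{Id}$ have intersecting left-right orbit closures if and only if the tuples obtained by deleting the first entry have intersecting $\GL_n$-conjugation orbit closures; this is exactly Corollary~\ref{SCtoLR} (with the roles of $\phi$ read backwards), applied to $\widetilde A$ and $\widetilde B$. Chaining the two equivalences gives $A \sim_{LR} B \iff (\mathrm{Id},\widetilde A) \sim_{LR} (\mathrm{Id},\widetilde B) \iff \widetilde A \sim_C \widetilde B$, which is the claim.

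The main obstacle I anticipate is handling the determinant/scalar bookkeeping rigorously: since we are forced to work with $\SL_n\times\SL_n$ rather than $\GL_n\times\GL_n$, one cannot simply "divide by $A_1$" inside the group, and the argument only goes through because the hypothesis $\det(A_1)=\det(B_1)$ guarantees that whatever central scalar correction is needed to land in $\SL_n\times\SL_n$ is identical for $A$ and $B$, so it can be absorbed using the commuting $\GL_1$-action and Lemma~\ref{commuting}. Everything else — the passage from "first entry $\mathrm{Id}$" to deleting it — is a direct citation of Corollary~\ref{SCtoLR}/Remark~\ref{relate msi-mi}.
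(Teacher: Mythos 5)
Your proposal is correct and follows essentially the same route as the paper's proof: normalize the first matrix to the identity by acting with an element of $\SL_n\times\SL_n$, handle the determinant obstruction using the commuting scalar ($\GL_1\subset\GL_m$) action via Lemma~\ref{commuting} (where the hypothesis $\det A_1=\det B_1$ is exactly what lets the same scalar serve for both tuples), and then invoke Corollary~\ref{SCtoLR}. The paper's writeup is a bit cleaner about the bookkeeping you wrestle with in your first paragraph: it first treats the case $\det A_1=\det B_1=1$, where $(A_1^{-1},\mathrm{Id})\in\SL_n\times\SL_n$ directly, and then reduces the general case to it by scaling both tuples by $\lambda$ with $\lambda^n\det A_1=1$, noting (implicitly) that $\widetilde{\lambda A}=\widetilde A$.
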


\begin{proof}
Let us first suppose that $\det(A_1) = \det(B_1) = 1$. Then for $ g = (A_1^{-1}, {\rm Id}) \in \SL_n \times \SL_n$, we have $g \cdot A =  ({\rm Id},A_1^{-1}A_2,\dots,A_1^{-1}A_m) = \phi(\widetilde{A})$. Similarly for $h = (B_1^{-1},{\rm Id}) \in \SL_n \times \SL_n$, we have $h \cdot B = \phi(\widetilde{B})$. Now, we have 
$$
A \sim_{LR} B \iff g \cdot A \sim_{LR} h \cdot B \iff \phi(\widetilde{A}) \sim_{LR} \phi(\widetilde{B}) \iff \widetilde{A} \sim_{C} \widetilde{B}.
$$

The last statement follows from Corollary~\ref{SCtoLR}. The general case for $\det(A_1) \neq 0$ follows because the orbit closures of $A$ and $B$ intersect if and only if the orbit closures of $\lambda \cdot A = (\lambda A_1,\dots,\lambda A_m)$ and $\lambda \cdot B = (\lambda B_1, \dots, \lambda B_m)$ intersect for any $\lambda \in K^*$, see Lemma~\ref{commuting}. 
\end{proof}

\begin{lemma} \label{mat.row}
For any non-zero row vector $ {\bf v} = (v_1,\dots,v_m)$, we can construct efficiently a matrix $P \in \GL_m$ such that the top row of the matrix $P$ is ${\bf v}$.
\end{lemma}

\begin{proof}
This is straightforward and left to the reader.
\end{proof}

\begin{algorithm} Now we give an algorithm to reduce the orbit closure problem with witness for matrix semi-invariants to the orbit closure problem with witness for matrix invariants.\\
\noindent{\bf Input:}  $A, B \in \Mat_{n,n}^m$
\begin{description}
\setlength\itemsep{.5em}
\item [Step 1] Check if $A$ or $B$ are in the null cone by the IQS algorithm. If both of them are in the null cone, then $A \sim_{LR} B$. 
If precisely one of them is in the null cone, then $A \not\sim_{LR} B$ and the IQS algorithm gives an invariant that separates $A$ and $B$. If neither are in the null cone, then we proceed to Step 2.

\item [Step 2]  Neither $A$ nor $B$  in the null cone. Now, for $d \in \{n-1,n\}$, the IQS algorithm constructs $T(d) \in \Mat_{d,d}^m$ such that $f_{T(d)}(A) \neq 0$ in polynomial time. We denote $f_d:=f_{T(d)}$. If $f_d(A) \neq f_d(B)$, then $A \not\sim_{LR} B$ and $f_d$ is the separating invariant. Else $f_d(A) = f_d(B)$ for both choices of $d \in \{n-1,n\}$, and we proceed to Step~3.

\item [Step 3] For $d \in \{n-1,n\}$, we have 
\begin{align*}
f_d(A) &= \det \left(\sum_i T(d)_i \otimes A_i\right) \\
& = \det\left(\sum_i (\sum_{j,k}(T(d)_i)_{j,k} E_{j,k}) \otimes A_i\right) \\
& = \det\left(\sum_{i,j,k} (T(d)_i)_{j,k} (E_{j,k} \otimes A_i)\right)\\
&= \det\left(\sum_{i,j,k} (T(d)_i)_{j,k} (A_i \otimes E_{j,k})\right).
\end{align*}

 We can construct efficiently a matrix $P \in \Mat_{md^2,md^2}$ such that the first row is $(T(d)_i)_{j,k})_{i,j,k}$ by Lemma~\ref{mat.row}.  Consider $U = P \star A^{[d]}, V = P \star B^{[d]} \in \Mat_{nd,nd}^{md^2}.$ By construction, this has the property that $\det(U_1) =f_d(A)\neq 0$, and $\det(V_1) = f_d(B)$. Since we did not terminate in Step 2, we know that $\det(U_1) = \det(V_1)$. Let us recall that by Corollary~\ref{cor:n-1,n}, $A \sim_{LR} B$ if and only $A^{[d]} \sim_{LR} B^{[d]}$ for both $d = n-1$ and $d = n$. By Lemma~\ref{commuting}, $A^{[d]} \sim_{LR} B^{[d]}$ if and only if $U \sim_{LR} V$.

To decide whether $U \sim_{LR} V$, we do the following. Let $\widetilde{U} = (U_1^{-1}U_2,\dots,U_1^{-1}U_{md^2})$ and $\widetilde{V} = (V_1^{-1}V_2,\dots,V_1^{-1}V_{md^2})$. By Proposition~\ref{LR to SC}, we have $U \sim_{LR} V$ if and only if $\widetilde{U} \sim_{C} \widetilde{V}$. But this can be seen as an instance of an orbit closure problem with witness for matrix invariants. Also note the fact if we get an invariant separating $\widetilde{U}$ and $\widetilde{V}$, the steps can be traced back to get an invariant separating $A$ and $B$.

\end{description}
\end{algorithm}

\begin{corollary}
There is a polynomial time reduction from the orbit closure problem with witness for matrix semi-invariants to the orbit closure problem with witness for matrix invariants.
\end{corollary}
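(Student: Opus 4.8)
The statement follows by verifying that the algorithm just presented is a correct polynomial-time reduction. The plan is to check three things in turn: that every step other than the oracle call runs in polynomial time and invokes the matrix-invariants oracle only on inputs whose dimensions are polynomial in $n$ and $m$; that the decision produced is correct; and that, when $A\not\sim_{LR}B$, a separating invariant $f\in R(n,m)$ can be reconstructed in polynomial time from the oracle's witness.

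For the running time, I would note that Step~1 calls the IQS algorithm (Theorem~\ref{IQS-algo}) on $A$ and $B$, Step~2 calls it again to produce $T(n-1),T(n)$ and then evaluates $f_{n-1},f_n$ at $A,B$ (each a small linear combination of Kronecker products followed by one determinant of size $nd\le n^2$), and Step~3 builds $P\in\GL_{md^2}$ by Lemma~\ref{mat.row}, forms $U,V\in\Mat_{nd,nd}^{md^2}$ with $nd\le n^2$ and $md^2\le mn^2$, inverts $U_1,V_1$, and passes $\widetilde U,\widetilde V$ to the oracle --- all on data of size polynomial in $n,m$, with the oracle called at most twice.

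Correctness of the decision is then a matter of assembling the results already available: if both $A,B$ are in the null cone they are closure equivalent (both closures contain $0$); if exactly one is, Theorem~\ref{DM} supplies an $f_T$ of positive degree separating them (being of positive degree, it vanishes on the null cone); if neither is and $f_d(A)\neq f_d(B)$ we are done; and if $f_{n-1},f_n$ fail to separate, then $\det(U_1)=f_d(A)=f_d(B)=\det(V_1)\neq 0$, so Proposition~\ref{LR to SC} reduces $U\sim_{LR}V$ to $\widetilde U\sim_C\widetilde V$, while Lemma~\ref{commuting} and Corollary~\ref{cor:n-1,n} tie this back to $A\sim_{LR}B$ (testing both $d\in\{n-1,n\}$).

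The part I expect to require the most care is tracing the witness backwards through the composition of reductions. Given a separating $h\in S(nd,md^2-1)$ from the oracle, I would first pass to a homogeneous component (still separating), then take its adjugate lift $\widehat h\in R(nd,md^2)$ as in the proof of Proposition~\ref{domokos-surjection}, and check, using $\Ad(U_1)U_j=\det(U_1)U_1^{-1}U_j$ (Lemma~\ref{adjoint-prop}) together with $\det(U_1)=\det(V_1)$ and homogeneity, that $\widehat h$ separates $U$ and $V$; then twist by $P$, using that the $\GL_{md^2}$-action commutes with $\SL_{nd}\times\SL_{nd}$, to obtain an invariant separating $A^{[d]}$ and $B^{[d]}$; and finally unwind the $X\mapsto X^{[d]}$ construction via the identity $f_S(Y^{[d]})=f_{\widetilde S}(Y)$ from the proof of the proposition preceding Corollary~\ref{cor:n-1,n} (after expanding the invariant as a linear combination of $f_S$'s) to land on $f\in R(n,m)$ with $f(A)\neq f(B)$. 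Each lift is explicit and polynomial-size, so the real work is bookkeeping: keeping the dimensions and the two kinds of group translations consistent across the four layers of the reduction, each of which is individually routine.
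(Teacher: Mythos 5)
Your proposal matches the paper's intent — the paper states the algorithm and then asserts in one sentence that the witness can be traced backwards, so your work is really in filling in that tracing-back step, and most of what you write is correct: the homogeneity trick to get $\widehat h(U)=\det(U_1)^e\,h(\widetilde U)$, the twist by $P\in\GL_{md^2}$, the degree/dimension bookkeeping, and the two oracle calls are all fine.

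The one step I would push back on is the parenthetical ``after expanding the invariant as a linear combination of $f_S$'s.'' Taken literally, this is not an obviously polynomial-time operation: the invariant $g=\widehat h\circ(P\star -)$ that you have at that point is given as a composition of explicit maps, not as a sum of determinantal invariants, and the ambient space of homogeneous polynomials of that degree on $\Mat_{nd,nd}^{md^2}$ is exponentially large, so there is no a priori reason you can find an explicit $\sum c_S f_S$ representation efficiently. Fortunately the step is unnecessary. The map $Y\mapsto Y^{[d]}$ intertwines the $\SL_n\times\SL_n$-action on $\Mat_{n,n}^m$ with the $\SL_{nd}\times\SL_{nd}$-action on $\Mat_{nd,nd}^{md^2}$, via $(PYQ^{-1})^{[d]} = (P\otimes\mathrm{Id}_d)\,Y^{[d]}\,(Q^{-1}\otimes\mathrm{Id}_d)$ and $\det(P\otimes\mathrm{Id}_d)=\det(P)^d=1$; hence $f(Y):=g(Y^{[d]})$ is already an element of $R(n,m)$, it is presented as an explicit, polynomial-size, polynomial-time-evaluable composition (namely $Y\mapsto Y^{[d]}\mapsto P\star Y^{[d]}\mapsto \sigma_{j,w}\bigl(\Ad((P\star Y^{[d]})_1)(P\star Y^{[d]})_\bullet\bigr)$ once the matrix-invariants oracle returns a $\sigma_{j,w}$), and it separates $A$ and $B$ by your earlier computation. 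So replace ``expand in $f_S$'s and apply $f_S(Y^{[d]})=f_{\widetilde S}(Y)$'' with ``observe $g\circ(\,\cdot\,)^{[d]}\in R(n,m)$'' and the argument closes cleanly; the $f_S$-spanning fact is only needed as an existence statement in the proposition you cite, not as an algorithmic step here.
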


\section{A polynomial time algorithm for finding a subalgebra basis} \label{Section pivot algo}
Let $\{C_1,\dots,C_m\} \subseteq \Mat_{n,n}$ be a finite subset of $\Mat_{n,n}$. Consider the (unital) subalgebra $\mathcal{C} \subseteq \Mat_{n,n}$ generated by $C_1,\dots,C_m$. In other words, $\mathcal{C}$ is the smallest subspace of $\Mat_{n,n}$ containing 
the identity matrix ${\rm Id}$ and the matrices $C_1,\dots,C_m$ that is closed under  multiplication. For a word $i_1i_2\dots i_b$ we define
$C_w=C_{i_1}C_{i_2}\cdot C_{i_b}$. We also define $C_\epsilon={\rm Id}$ for the empty word $\epsilon$.
We will describe a polynomial time algorithm for finding a basis for $\mathcal{C}$. First observe that $\mathcal{C}$ is spanned by $\{ C_w\mid  w\in [m]^\star\}$. While this is an infinite spanning set, we will extract a basis from this, in polynomial time. We define a total order on $[m]^\star$.

\begin{definition}
For words $w_1 = i_1i_2\dots i_b$ and $w_2 = j_1j_2\dots j_c$, we write $w_1 \prec w_2$ if either
\begin{enumerate}
\item $l(w_1) < l(w_2)$ or
\item $l(w_1) = l(w_2)$ and for the smallest integer $m$ for which $i_m \neq j_m$, we have $i_m < j_m$.
\end{enumerate}
\end{definition}

\begin{remark}
If $w \prec w'$, we will say $w$ is smaller than $w'$.
\end{remark}

We call a word $w$ a pivot if $C_w$ does not lie in the span of all $C_u$, $u\prec w$. Otherwise, we call $w$ a non-pivot.

\begin{lemma}
Let $P = \{w\mid w \text{ is pivot}\}$. Then $\{C_w \mid w \in P\}$ is a basis for $\mathcal{C}$. We will call this the pivot basis.
\end{lemma}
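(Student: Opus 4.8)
The plan is to show that $\{C_w \mid w \in P\}$ is both linearly independent and spanning for $\mathcal{C}$. Linear independence is essentially immediate from the definition of pivot: if there were a nontrivial linear dependence among the $C_w$ with $w \in P$, take the $\prec$-largest word $w_0$ appearing in it with nonzero coefficient; then $C_{w_0}$ is expressed as a linear combination of $C_u$ with $u \prec w_0$, contradicting that $w_0$ is a pivot. (Here I use that $\prec$ is a total order on $[m]^\star$ in which every word has only finitely many predecessors, so "largest word in a finite set" makes sense.)

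The substantive part is spanning. Since $\mathcal{C}$ is spanned by all $C_w$, $w \in [m]^\star$, it suffices to show every $C_w$ lies in the span of $\{C_u \mid u \in P\}$. I would do this by induction on $w$ with respect to the well-order $\prec$. If $w$ is a pivot, there is nothing to prove. If $w$ is a non-pivot, then by definition $C_w = \sum_{u \prec w} \lambda_u C_u$ for some scalars $\lambda_u$ (finitely many nonzero). By the induction hypothesis, each $C_u$ with $u \prec w$ lies in the span of $\{C_{u'} \mid u' \in P\}$, hence so does $C_w$. This completes the induction. The only point requiring a word of care is that $\prec$ is indeed a well-order: it refines the length order, and within each length there are only finitely many words, so there are no infinite descending chains — strong induction along $\prec$ is therefore valid.

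I do not expect a genuine obstacle here; the statement is a standard "pivot/echelon" argument transported to the setting of a monoid-ordered spanning set. The one thing to be careful about, and the only place where the specific structure of $\mathcal{C}$ (as opposed to an arbitrary ordered spanning family) might seem relevant, is that nothing in this lemma actually uses that $\mathcal{C}$ is closed under multiplication or contains $\mathrm{Id}$ — the lemma is purely about extracting a basis from the ordered spanning set $\{C_w\}$. That multiplicative structure is what will later make the pivot set $P$ algorithmically findable (a non-pivot stays "useless" under right multiplication), but for the present statement it plays no role, so I would not invoke it. Thus the proof is: (i) $\prec$ well-orders $[m]^\star$; (ii) spanning, by $\prec$-induction as above; (iii) linear independence, by extracting the largest offending word.
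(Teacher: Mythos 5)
Your proof is correct and complete. The paper in fact gives no proof of this lemma, treating it as a standard echelon/pivot extraction from an ordered spanning set; your argument (well-ordering of $\prec$, spanning by $\prec$-induction, and independence by taking the $\prec$-largest word in a putative dependence) is exactly the argument the paper leaves implicit, and your remark that the multiplicative structure of $\mathcal{C}$ is not used here — it only becomes relevant in Lemma~\ref{pos.pivots} and the algorithm — is accurate.
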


\begin{definition}
For words $w = i_1i_2\dots i_b$ and $w' = j_1j_2\dots j_c$, we define the concatenation 
$$
ww' = i_1i_2\dots i_bj_1j_2\dots j_c.
$$
\end{definition}

\begin{lemma} \label{pos.pivots}
If $w$ is a non-pivot, then $x  w  y$ is a non-pivot for all words $x,y \in [m]^\star$. 
\end{lemma}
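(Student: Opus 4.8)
The plan is to reduce the general statement to the two special cases $x = w'$ (prepending a single letter... wait, $x,y$ are words) — more precisely, by an obvious induction on the lengths of $x$ and $y$ it suffices to treat the case where $x$ is a single letter and $y$ is empty, and the symmetric case where $x$ is empty and $y$ is a single letter. Indeed, if we know that prepending one letter to a non-pivot yields a non-pivot, and appending one letter to a non-pivot yields a non-pivot, then $xwy$ is obtained from $w$ by finitely many such operations, each preserving non-pivotality. So the task is: show that if $w$ is a non-pivot and $i \in [m]$, then $iw$ is a non-pivot and $wi$ is a non-pivot.

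First I would unwind the definition. Saying $w$ is a non-pivot means $C_w \in \operatorname{span}\{C_u : u \prec w\}$, say $C_w = \sum_{u \prec w} \lambda_u C_u$ (a finite sum). The key observation about the order $\prec$ is that it is compatible with concatenation on both sides: if $u \prec w$ then $iu \prec iw$ and $ui \prec wi$ for any letter $i$. This is immediate from the definition of $\prec$: concatenation on the left by $i$ increases all lengths by one (so the length comparison is preserved) and shifts the position of first disagreement by one (so the lexicographic comparison among equal-length words is preserved); concatenation on the right by $i$ preserves lengths and does not affect the position of first disagreement, hence also preserves $\prec$. The one subtlety is that $u \prec w$ only controls $u$ strictly shorter than or equal in length to $w$, and after left-concatenation the comparison still goes through because $l(iu) = l(u)+1 \le l(w)+1 = l(iw)$, with equality of lengths exactly when $l(u) = l(w)$, in which case the lexicographic clause transfers verbatim.

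Now multiply the relation $C_w = \sum_{u \prec w} \lambda_u C_u$ on the left by $C_i$: since $C_{iw} = C_i C_w = \sum_{u \prec w} \lambda_u C_i C_u = \sum_{u \prec w} \lambda_u C_{iu}$, and each $iu$ satisfies $iu \prec iw$, we conclude $C_{iw} \in \operatorname{span}\{C_v : v \prec iw\}$, i.e. $iw$ is a non-pivot. Multiplying on the right by $C_i$ instead gives $C_{wi} = \sum_{u \prec w} \lambda_u C_{ui}$ with $ui \prec wi$, so $wi$ is a non-pivot. This completes the induction and hence the proof.

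I do not expect any genuine obstacle here; the statement is essentially a bookkeeping lemma. The only point requiring a moment's care — and the one I would write out explicitly — is the verification that $\prec$ is preserved under left- and right-concatenation by a single letter, since that is precisely what makes the linear relation transfer. Everything else (the induction reducing $xwy$ to single-letter steps, and multiplying a linear relation by a fixed matrix) is routine.
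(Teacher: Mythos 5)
Your proof is correct and takes essentially the same approach as the paper: the paper multiplies the linear relation $C_w=\sum_k a_k C_{w_k}$ on the left by $C_x$ and on the right by $C_y$ in one step, implicitly using that $w_k\prec w$ implies $xw_ky\prec xwy$, whereas you make the same argument but reduce to single-letter concatenations by induction and spell out the order compatibility. One small slip in wording: right-concatenation by a letter does not ``preserve lengths'' (it adds one); what it preserves is the \emph{relation} between the lengths of $u$ and $w$, which is what you actually need.
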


\begin{proof}
If $w$ is non-pivot, then $C_w = \sum_k a_k C_{w_k}$ for $w_k \prec w$ and $a_k \in K$. Then we have $C_{x  w  y} = \sum_k a_k C_{x  w_k  y}.$ Hence, $x  w y$ is non-pivot as well.
\end{proof}

\begin{corollary} \label{subword.pivot}
Every subword of a pivot word is a pivot.
\end{corollary}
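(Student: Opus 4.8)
The plan is to derive Corollary~\ref{subword.pivot} directly from Lemma~\ref{pos.pivots} by contrapositive. Suppose $w$ is a pivot word, and write $w = xvy$ for some (possibly empty) words $x, y$ and some subword $v$. I want to show $v$ is a pivot. Assume for contradiction that $v$ is a non-pivot. Then Lemma~\ref{pos.pivots} applies with the roles of $w, x, y$ in that lemma played by $v, x, y$ here: it tells us that $xvy$ is a non-pivot. But $xvy = w$, contradicting the assumption that $w$ is a pivot. Hence $v$ must be a pivot.

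The only thing to be careful about is the bookkeeping of what counts as a ``subword'': here a subword of $w = i_1 i_2 \dots i_b$ means a contiguous block $i_a i_{a+1} \dots i_c$ with $1 \le a \le c \le b$, so that $w$ factors as a concatenation $xvy$ with $v = i_a \dots i_c$, $x = i_1 \dots i_{a-1}$, and $y = i_{c+1} \dots i_b$, where $x$ or $y$ may be the empty word $\epsilon$. Since Lemma~\ref{pos.pivots} is stated for arbitrary words $x, y \in [m]^\star$ (including the empty word, as $C_\epsilon = {\rm Id}$ acts as the multiplicative identity), this factorization is exactly the hypothesis needed, and no edge case requires separate treatment.

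I do not anticipate any real obstacle: the corollary is an immediate logical consequence of the preceding lemma, so the ``hard part'' is merely to phrase the contrapositive cleanly. A one- or two-sentence proof suffices, along the following lines.

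\begin{proof}
Let $w$ be a pivot and write $w = xvy$ where $v$ is a subword of $w$ and $x,y \in [m]^\star$ (allowing $x$ or $y$ to be empty). If $v$ were a non-pivot, then by Lemma~\ref{pos.pivots} the word $xvy = w$ would also be a non-pivot, contradicting our assumption. Hence $v$ is a pivot.
\end{proof}
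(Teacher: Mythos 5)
Your proof is correct and is exactly the intended contrapositive of Lemma~\ref{pos.pivots}; the paper states the corollary without proof precisely because it is this immediate consequence. No issues.
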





\begin{lemma} \label{imp.bound.length}
The length of the longest pivot is at most $2n\log_2(n) + 4n - 4$.
\end{lemma}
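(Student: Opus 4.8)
The plan is to bound the number of pivots of each length by a dimension-counting argument based on subalgebras generated by the first one or two letters, and then sum these bounds over lengths. Fix the generators $C_1,\dots,C_m$, and for a word $w=i_1i_2\dots i_b$ consider the pivot structure. By Corollary~\ref{subword.pivot}, every subword (in particular every prefix and suffix) of a pivot is again a pivot, so the collection of pivots forms a ``prefix-closed'' and ``suffix-closed'' set of words. Since $\{C_w : w \text{ a pivot}\}$ is a basis for $\mathcal{C}$, there are exactly $\dim\mathcal{C} \leq n^2$ pivots in total, which already gives a bound; the point is to get a bound on the \emph{length} of the longest pivot that is $O(n\log n)$ rather than $O(n^2)$.

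First I would observe that the pivot words of a given length $\ell$, grouped by their first letter $i$, give linearly independent elements $C_w$; more importantly, if $w=i\,w'$ is a pivot then $C_{w}$ lies in the (non-unital) left ideal $C_i\cdot\mathcal{C}$, and iterating, a pivot $w=i_1i_2\dots i_b$ gives an element of $C_{i_1}C_{i_2}\cdots C_{i_k}\cdot\mathcal{C}$ for each prefix length $k$. The key quantitative input is that for a single matrix $C$, the dimension of the space $C^k\Mat_{n,n}$ (equivalently the rank of $C^k$) is nonincreasing in $k$ and stabilizes; combined with the Fitting-type decomposition, the rank drops geometrically until it reaches the rank of the stable part, and after $\log_2 n$ (roughly) halvings plus a linear ``tail'' corresponding to the nilpotent part's index of nilpotency, one controls how long a chain of pivots sharing a common structure can be. So I would stratify the pivots of length $\ell$ according to how the associated subspace sits inside $\mathcal{C}$, and track the sequence of dimensions $d_\ell$ of the span of $\{C_w : w \text{ pivot of length} \leq \ell\}$ together with a secondary ``rank-type'' invariant that must strictly decrease along any long run.

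Concretely, the main step is: if $w$ is a pivot of length $\ell$, consider the subalgebra generated by $C_{i_1}$ together with $\mathcal{C}$, or more precisely analyze the filtration of $\mathcal{C}$ by powers of a fixed element appearing repeatedly in $w$ (by pigeonhole a long pivot must repeat a letter many times, or better, one reduces to controlling a single matrix's power behavior). Using that over an algebraically closed field an $n\times n$ matrix $C$ satisfies $\operatorname{rank}(C^{k+1}) < \operatorname{rank}(C^k)$ whenever $C^k$ is not of stable rank, and that this can happen at most... along a chain, one shows any pivot longer than $2n\log_2(n) + 4n - 4$ would force a strictly decreasing sequence of nonnegative integers that is too long — contradiction. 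The $2n\log_2 n$ term comes from the geometric decay of ranks of powers ($n \to n/2 \to n/4 \to \cdots$, each halving ``costs'' roughly $2n$ in word-length when spread over the $n^2$-dimensional algebra), and the $4n-4$ is the linear contribution from the unipotent/nilpotent part where ranks decrease by only $1$ at a time.

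The main obstacle I anticipate is making precise the reduction from ``long pivot word'' to ``long strictly decreasing chain of ranks of powers of a single matrix'': a pivot word involves many different generators $C_i$, not powers of one matrix, so one cannot directly invoke the single-matrix rank-decay estimate. The right move is probably to note that pivots of length $\ell+1$ with a fixed prefix of length $\ell$ are controlled by a \emph{single} multiplication operator (left multiplication by some $C_i$, or rather by the whole set simultaneously), and then to set up the correct ``potential function'' — likely the sum over $i$ of $\dim(C_i \cdot W_\ell)$ where $W_\ell$ is the span of pivots of length $\leq \ell$, or a logarithmic version thereof — and show it decreases appropriately. Getting the exact constants $2$, $4$, and $-4$ to match will require carefully combining the logarithmic (dimension-halving) regime with the linear (dimension-drops-by-one) regime, handling the boundary between them, and accounting for the unital generator $C_\epsilon = {\rm Id}$.
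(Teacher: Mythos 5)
Your proposal has a genuine gap, and you have correctly identified it yourself: the step from ``long pivot word'' to ``long strictly decreasing chain of ranks of powers of a single matrix'' does not go through, because a pivot word mixes many different generators, and there is no known way to reduce the length question for a tuple of generators to power-rank decay of a single matrix. What you are trying to re-derive from scratch is, in fact, a nontrivial theorem in its own right. The quantity you want to bound is exactly what the literature calls the \emph{length} of the finite-dimensional algebra $\mathcal{C}$: the least $k$ such that words of length $\leq k$ in the generators span $\mathcal{C}$. Since every subword of a pivot is a pivot (Corollary~\ref{subword.pivot}), a pivot of length $\ell$ witnesses that words of length $< \ell$ do not span $\mathcal{C}$, so the longest pivot has length exactly $l(\mathcal{C})$ in this sense. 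The paper then simply cites Shitov \cite{Shitov-length}, who proved $l(S) \leq 2n\log_2(n) + 4n - 4$ for any generating set $S \subseteq \Mat_{n,n}$; this sharpened the earlier $O(n^{3/2})$-type bound of Pappacena \cite{Pappacena}. In short, the lemma is an application of a known hard result, not something to be proved in a few lines. Your heuristic about a logarithmic ``rank-halving'' regime plus a linear ``nilpotent tail'' does echo the shape of the final bound, but making it rigorous across a multi-generator word is precisely the content of Shitov's paper and cannot be dispatched by the dimension-counting/potential-function sketch you outline. A correct submission here should recognize the statement as the length bound for matrix algebras and cite it, rather than attempt a from-scratch argument.
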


\begin{proof}
This follows from the main result of \cite{Shitov-length}. For a collection $S \subseteq \Mat_{n,n}$, we define $l(S)$ as the smallest integer $k$ such that all the words of length $\leq k$ in $S$ span the subalgebra of $\Mat_{n,n}$ generated by $S$. In particular, if we take $S = \{C_1,\dots,C_m\}$, this means that any pivot word has length at most $l(S)$. Moreover, $l(S) \leq 2n \log_2(n) + 4n- 4$ is the statement of \cite[Theorem~3]{Shitov-length} (a strong improvement over the previous known bound from \cite{Pappacena}). Thus every pivot word has length at most $2n \log_2(n) + 4n - 4$ as required.
\end{proof}





Now, we describe an efficient algorithm to construct the set of pivots.

\begin{algorithm}[Finding a basis for a subalgebra of $\Mat_{n,n}$]\ \\[5pt]
\noindent{\bf Input:} $n\times n$ matrices $C_1,C_2,\dots,C_m$
\begin{description}
\setlength\itemsep{.5em}

\item[Step 1] Set $ t= 1$ and $P = P_0 = [(\epsilon,{\rm Id})]$.

\item [Step 2]  If $P_{t-1}=[w_1,w_2,\dots,w_s]$, define
 $$P_t=[w_11,\dots,w_1m,w_21,\dots,w_2m,\dots,w_s1,\dots,w_sm]$$

\item [Step 3]
 Proceeding through the list $P_t$, check if an entry $(w,C_w)$ is a pivot. This can be done in polynomial time, as we have to simply check if $C_w$ is a linear combination of smaller pivots. If it is a pivot, add it to $P$. If it is not a pivot, then remove it from $P_t$. Upon completing this step, the list $P_t$ contains all the pivots of length $t$, and the list $P$ contains all pivots of length $\leq t$. 

\item[Step 4] If $P_{t}\neq []$, set $t = t+1$ and go back to Step 2.  Else, return $P$ and terminate.
\end{description}
\end{algorithm}

\begin{corollary} \label{algo-pivots}
There is a polynomial time algorithm to construct the set of pivots. Further, this algorithm also records the word associated to each pivot.
\end{corollary}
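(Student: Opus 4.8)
The plan is to argue that the displayed Algorithm correctly computes the set $P$ of pivot words together with their associated matrices, and that it runs in polynomial time; both facts follow by combining the structural lemmas already established. First I would verify correctness. The key point is that the list $P_t$, after Step~3, contains exactly the pivots of length $t$. This is where Lemma~\ref{pos.pivots} (equivalently Corollary~\ref{subword.pivot}) does the real work: a word $w$ of length $t$ that \emph{is} a pivot has all of its proper subwords being pivots, and in particular every length-$(t-1)$ suffix-truncation — more precisely, every word of the form $w'i$ with $w = w'i$ — is a pivot of length $t-1$, hence already recorded in $P_{t-1}$. Therefore each length-$t$ pivot $w$ appears as $w_r i$ for some $w_r \in P_{t-1}$ and some $i \in [m]$, so it is generated in Step~2 and survives the pivot test in Step~3. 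Conversely, every entry of $P_t$ that passes the test is by definition a pivot. Thus the invariant ``$P_t$ = all pivots of length $t$, and $P$ = all pivots of length $\leq t$'' is maintained, by induction on $t$, starting from the base case $P_0 = [(\epsilon,{\rm Id})]$. (Here one uses that $\epsilon$ is always a pivot since ${\rm Id}\neq 0$.)

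Next I would address termination and the completeness of the output. By Lemma~\ref{imp.bound.length}, every pivot has length at most $L := 2n\log_2(n) + 4n - 4$. Once $t$ exceeds $L$, there are no pivots of length $t$, so $P_t$ becomes empty and the algorithm terminates in Step~4. At that moment $P$ contains all pivots of every length, i.e. the full pivot set, and since Steps~2--3 carry the matrix $C_w$ (and the word $w$ itself) alongside each entry, the algorithm indeed records the word associated to each pivot, as claimed.

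Finally, the run-time analysis. The number of iterations is at most $L+1 = O(n\log n)$. In iteration $t$, the list $P_t$ is built from $P_{t-1}$ by appending $m$ children to each surviving word; since the total number of pivots is the dimension of the subalgebra $\mathcal{C}$, which is at most $n^2$, the list $P_t$ has length at most $mn^2$ at any point before pruning. For each candidate word $w$, the pivot test amounts to solving a linear system: express $C_w \in \Mat_{n,n}$ (an $n^2$-dimensional space) as a linear combination of the at most $n^2$ matrices $C_u$ with $u \prec w$ already in $P$. This is Gaussian elimination on a matrix of size roughly $n^2 \times n^2$, costing $\mathrm{poly}(n)$ arithmetic operations in the unit-cost model; computing $C_w = C_{w_r} C_i$ from a stored $C_{w_r}$ is a single matrix multiplication. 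So each iteration costs $\mathrm{poly}(n,m)$, and the whole algorithm costs $\mathrm{poly}(n,m)$. The main obstacle in writing this up is not any single hard step but making the subword/suffix argument airtight — one must be careful that the generation rule in Step~2, which appends a single letter \emph{on the right}, does reach every pivot; this is exactly guaranteed because if $w = w'i$ is a pivot then $w'$ is a subword of $w$, hence a pivot (Corollary~\ref{subword.pivot}), so $w' \in P_{t-1}$ and $w = w'i$ is produced. With that observation in place, everything else is bookkeeping.
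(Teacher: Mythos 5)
Your proposal is correct and follows essentially the same route as the paper: the run-time bound rests on exactly the two facts the paper uses (at most $n^2$ pivots per length, hence at most $mn^2$ candidates per round, and Lemma~\ref{imp.bound.length} bounding the number of rounds), while the paper leaves the correctness of the enumeration implicit and you spell it out via Corollary~\ref{subword.pivot}. One small slip in wording: where you write ``every word of the form $w'i$ with $w = w'i$ is a pivot of length $t-1$,'' you mean the truncation $w'$ (not $w'i$, which is $w$ itself) is the length-$(t-1)$ pivot that guarantees $w$ gets generated in Step~2.
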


\begin{proof}
To show that the above algorithm runs in polynomial time, it suffices to show that the number of words we consider is at most polynomial. Indeed, if there are $k$ pivots of length $d$, then we only consider $km$ words of length $d+1$. Since $k \leq n^2$, the number of words we consider in each degree is at most $n^2m$. We only consider words of length up to $2n \log_2(n) + 4n - 4$. Hence, the number of words considered is polynomial (in $n$ and $m$). 
\end{proof}

\section{Orbit closure problem for matrix invariants} \label{MI-algo}
Let $A,B \in \Mat_{n,n}^m$ with $A = (A_1,\dots,A_m)$ and $B = (B_1,\dots,B_m)$.
Define 
$$
C_i=\begin{pmatrix}A_i & 0\\ 0 & B_i\end{pmatrix}
$$
for all $i$. Let ${\mathcal C}$ be the algebra generated by $C_1,C_2,\dots,C_m$. Let $Z_1,Z_2,\dots,Z_s$ be the pivot basis of ${\mathcal C}$
and write
$$
Z_j=\begin{pmatrix}X_j & 0\\ 0 & Y_j\end{pmatrix}
$$
for all $j$.

\begin{proposition} \label{oc-sub}
Suppose $\kar (K) = 0$. Then we have $A \sim_{C} B$ if and only if $\Tr(X_j)=\Tr(Y_j)$ for all $j$.
\end{proposition}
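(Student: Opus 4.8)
The plan is to use the representation-theoretic characterization in Proposition~\ref{Artin} together with the classical fact (Theorem~\ref{Pro}, Theorem~\ref{db-mi}) that in characteristic $0$ the invariant ring $S(n,m)$ is generated by traces of words, and moreover $\Tr(X_w)$ for words of length $\le n^2$ suffice. Set $A=(A_1,\dots,A_m)$, $B=(B_1,\dots,B_m)$, $C_i=\mathrm{diag}(A_i,B_i)\in\Mat_{2n,2n}$, and let $\mathcal C$ be the unital subalgebra of $\Mat_{2n,2n}$ generated by the $C_i$'s, with pivot basis $Z_1,\dots,Z_s$, $Z_j=\mathrm{diag}(X_j,Y_j)$.

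First I would prove the forward direction. If $A\sim_C B$, then by Theorem~\ref{first} every invariant $f\in S(n,m)$ satisfies $f(A)=f(B)$; in particular $\Tr(A_w)=\Tr(B_w)$ for every word $w\in[m]^\star$, since $T_w\in S(n,m)$. Now each $Z_j$ is, by definition of the pivot basis, a $K$-linear combination $Z_j=\sum_k \lambda_k C_{w_k}$ of words $C_{w_k}$ in the $C_i$'s. Reading off the block structure, $X_j=\sum_k\lambda_k A_{w_k}$ and $Y_j=\sum_k\lambda_k B_{w_k}$. Hence $\Tr(X_j)=\sum_k\lambda_k\Tr(A_{w_k})=\sum_k\lambda_k\Tr(B_{w_k})=\Tr(Y_j)$, as desired.

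For the converse, suppose $\Tr(X_j)=\Tr(Y_j)$ for all $j$. I want to deduce $\Tr(A_w)=\Tr(B_w)$ for all words $w$, because then $T_w(A)=T_w(B)$ for all $w$, and since (char $0$) the $T_w$ generate $S(n,m)$, every invariant agrees on $A$ and $B$, so by Theorem~\ref{first} $A\sim_C B$. To get this, note that for an arbitrary word $w$, $C_w=\mathrm{diag}(A_w,B_w)$ lies in $\mathcal C$, so $C_w=\sum_j\mu_j Z_j$ for scalars $\mu_j$ (the $Z_j$ being a basis). Comparing blocks, $A_w=\sum_j\mu_j X_j$ and $B_w=\sum_j\mu_j Y_j$, whence $\Tr(A_w)=\sum_j\mu_j\Tr(X_j)=\sum_j\mu_j\Tr(Y_j)=\Tr(B_w)$. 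This completes the argument.

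I expect the proof to be essentially this clean, with the only real "content" being the appeal to Sibirski\u\i--Procesi (Theorem~\ref{Pro}) for the generation statement; the main subtlety — and the reason the characteristic-$0$ hypothesis is needed — is precisely that $S(n,m)$ is generated by traces of words only in characteristic $0$ (in positive characteristic one must instead use characteristic coefficients $\sigma_{j,w}$, which is why the algorithm and its analysis in later sections must be modified). A secondary point to state carefully is that the equivalence "$A\sim_C B$ iff $f(A)=f(B)$ for all $f\in S(n,m)$" is exactly Theorem~\ref{first} applied to the reductive group $\GL_n$, so no appeal to Proposition~\ref{Artin} is actually needed — though one could alternatively phrase the converse via associated semisimple representations. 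I would also remark, for the benefit of the algorithmic application, that only finitely many $Z_j$ (at most $s\le (2n)^2$) need to be checked, and by Theorem~\ref{db-mi} one could even restrict attention to words of bounded length, but that refinement is not needed for the statement itself.
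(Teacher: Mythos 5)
Your proof is correct and essentially the same as the paper's: both directions reduce to the fact that $\mathcal C$ is the span of the $C_w=\operatorname{diag}(A_w,B_w)$, so by linearity of trace, equality $\Tr(X_j)=\Tr(Y_j)$ on a basis is equivalent to $T_w(A)=T_w(B)$ for all words $w$, which by Theorem~\ref{Pro} (Sibirski\u\i--Procesi) and Theorem~\ref{first} is equivalent to $A\sim_C B$. One tiny point: the pivot basis elements are literally single words, $Z_j=C_{w_j}$, not general linear combinations, which makes the forward direction even more immediate than you phrased it.
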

\begin{proof}
Two orbit closures do not intersect if and only if there is an invariant that separates them. By Theorem~\ref{Pro}, the invariant ring is generated by invariants of the form
$X \mapsto \trace X_w$ for some word $w$ in the alphabet $\{1,2,\dots,m\}$. Note that ${\mathcal C}$ is the span of all 
$$C_w=\begin{pmatrix}
A_w & 0\\
0 & B_w \end{pmatrix},$$ 
where $w$ is a word. Now the proposition follows by linearity of trace.
\end{proof}

We will appeal to a result from \cite{CIW} in order to get a version of the above proposition in arbitrary characteristic (see also \cite{Procesi-2}).



\begin{theorem} \label{sep.pos.mi}
We have $A \sim_{C} B$ if and only if $\det({\rm Id}+tX_j)=\det({\rm Id}+t Y_j)$ as a polynomial in $t$ for all $j$.
\end{theorem}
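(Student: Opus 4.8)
The plan is to reduce the arbitrary-characteristic statement to the same underlying principle as Proposition~\ref{oc-sub} --- namely, that orbit closures fail to intersect exactly when a generating invariant separates --- but now using Donkin's generators (Theorem~\ref{donkin-gen}) in place of Procesi--Sibirski\u\i's traces, together with a result of \cite{CIW} expressing when two block-diagonal tuples have the same values on all characteristic-coefficient invariants. First I would record, exactly as in the proof of Proposition~\ref{oc-sub}, that $\mathcal{C}$ is the $K$-span of the block matrices $C_w = \mathrm{diag}(A_w, B_w)$ over all words $w$, so that $\{X_j\}$ spans the subalgebra of $\Mat_{n,n}$ generated by $A_1,\dots,A_m$ (call it $\mathcal{A}$) and $\{Y_j\}$ spans the subalgebra generated by $B_1,\dots,B_m$ (call it $\mathcal{B}$), with the block-diagonal structure giving a linear isomorphism $\mathcal{C} \to \mathcal{A} \times \mathcal{B}$ respecting the algebra structure componentwise.

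Next I would unwind the two directions. For the ``only if'' direction: if $A \sim_C B$, then $\phi^*$-type reasoning is not needed; directly, every invariant agrees on $A$ and $B$, and in particular $\sigma_{j,w}(A) = \sigma_{j,w}(B)$ for all words $w$ and $1 \le j \le n$ by Theorem~\ref{donkin-gen}. This says $\det(\mathrm{Id} + tA_w) = \det(\mathrm{Id} + tB_w)$ for every word $w$. Since each $X_j$ (resp.\ $Y_j$) equals $A_{w}$ (resp.\ $B_w$) for the pivot word $w$ associated to $Z_j$ --- here I use that the subalgebra-basis algorithm of Section~\ref{Section pivot algo} records the pivot word, so $Z_j = C_{w_j}$ and hence $X_j = A_{w_j}$, $Y_j = B_{w_j}$ --- the equality of characteristic polynomials follows. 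For the ``if'' direction, I would argue that $\det(\mathrm{Id} + tX_j) = \det(\mathrm{Id}+tY_j)$ for all $j$ forces $\sigma_{k}(A_w) = \sigma_k(B_w)$ for \emph{every} word $w$, not just pivot words: this is where the result of \cite{CIW} (and \cite{Procesi-2}) enters --- it should say that the characteristic coefficients of all elements of a matrix subalgebra are determined, via universal polynomial formulas valid in all characteristics, by the characteristic coefficients of the elements of any spanning set (equivalently, the map sending a subalgebra element to the tuple of its characteristic coefficients factors appropriately). Applying this to $\mathcal{A}$ spanned by $\{X_j\}$ and $\mathcal{B}$ spanned by $\{Y_j\}$, and using that $X_j \mapsto Y_j$ extends to an algebra homomorphism $\mathcal{A} \to \mathcal{B}$ compatible with the generators $A_i \mapsto B_i$, we get $\sigma_{k,w}(A) = \sigma_{k,w}(B)$ for all $w$ and all $k$, hence $f(A) = f(B)$ for all $f$ in Donkin's generating set, hence for all $f \in S(n,m)$. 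By Theorem~\ref{first} this gives $A \sim_C B$.

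The main obstacle I anticipate is making precise and correctly invoking the \cite{CIW} ingredient: the subtlety is that in positive characteristic the characteristic coefficients $\sigma_k$ are \emph{not} polynomial expressions in the traces $\sigma_1$ of powers and products (Newton's identities break down), so one genuinely needs a statement guaranteeing that ``same characteristic polynomials on a spanning set of a subalgebra $\Rightarrow$ same characteristic polynomials on the whole subalgebra, uniformly in characteristic.'' The cleanest formulation is probably in terms of the existence of an algebra map intertwining the two spanning sets plus the fact (Donkin) that the invariants $\sigma_{k,w}$ generate: if $A_i \mapsto B_i$ extends to $\mathcal{A} \to \mathcal{B}$, then $A_w \mapsto B_w$ for all $w$ and the $\sigma_{k}$ of corresponding elements agree because $\sigma_k$ is intrinsic to the (abstract) algebra element's action. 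I would want to cite \cite{CIW} for the statement that the hypothesis $\det(\mathrm{Id}+tX_j) = \det(\mathrm{Id}+tY_j)$ for a basis $\{X_j\},\{Y_j\}$ of corresponding subalgebras is exactly what is needed to conclude the two subalgebra representations have ``the same characteristic coefficient data,'' and then transport that conclusion back through Donkin's theorem; the rest is bookkeeping with pivot words and linearity.
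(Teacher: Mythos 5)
Your proof of the ``only if'' direction is fine, and you correctly identify that the result of \cite{CIW} is the crucial ingredient, but your proposed way of invoking it in the ``if'' direction has a genuine gap. You want to go from ``$\det(\mathrm{Id}+tX_j)=\det(\mathrm{Id}+tY_j)$ for the pivot basis'' to ``$\det(\mathrm{Id}+tA_w)=\det(\mathrm{Id}+tB_w)$ for all words $w$'' and you propose two mechanisms for this, both of which fail. First, the claim that ``the characteristic coefficients of all elements of a matrix subalgebra are determined, via universal polynomial formulas, by the characteristic coefficients of a spanning set'' is simply false: if $C_w=\sum_j\lambda_j Z_j$ then $A_w=\sum_j\lambda_j X_j$ and $B_w=\sum_j\lambda_j Y_j$, but the characteristic polynomial of a linear combination is not determined by the characteristic polynomials of the summands (e.g.\ $X_1=E_{12}$, $X_2=E_{21}$ versus $Y_1=Y_2=E_{12}$: all four have trivial characteristic polynomial, but $X_1+X_2$ has $1-t^2$ while $Y_1+Y_2$ has $1$). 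Second, the claimed algebra homomorphism $\mathcal{A}\to\mathcal{B}$, $X_j\mapsto Y_j$ extending $A_i\mapsto B_i$, is not available: it requires $\ker(\mathcal{C}\twoheadrightarrow\mathcal{A})\subseteq\ker(\mathcal{C}\twoheadrightarrow\mathcal{B})$, which can fail even when $A\sim_C B$ (take $A=(0)$, $B=(E_{12})$ in $\Mat_{2,2}$: both have the same semisimplification, but the only candidate map $\mathcal{A}\to\mathcal{B}$ is not well defined since $X_2=0$ while $Y_2=E_{12}\neq 0$). In fact the map $X_j\mapsto Y_j$ need not even be well defined linearly, since dependencies among the $X_j$ are not forced to hold among the $Y_j$.

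The paper's actual argument sidesteps these issues entirely by working at the level of representations rather than invariants. It uses Proposition~\ref{Artin} to translate $A\sim_C B$ into the statement that $V_A$ and $V_B$ have isomorphic associated semisimple representations as $F_m$-modules, notes (Remark~\ref{ass.equal}) that both actions factor through the finite-dimensional algebra $\mathcal{C}$ so this is equivalent to the same statement for $\mathcal{C}$-modules, and then invokes \cite[Corollary~12]{CIW}, which is a Brauer--Nesbitt-type result: two finite-dimensional representations of a finite-dimensional algebra have the same composition factors if and only if every element of a $K$-basis of the algebra has the same characteristic polynomial in both representations. That statement about a \emph{pair} of representations of one algebra is exactly what is needed; it is not a statement about characteristic polynomials within a single subalgebra, and it cannot be replaced by the polynomial-formula or algebra-homomorphism arguments you sketch. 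To repair your proof you would essentially have to rediscover the Artin/semisimplification translation, at which point you would be reproducing the paper's proof.
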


\begin{proof}
Let $F_m$ denote free algebra generated by $m$ elements $f_1,\dots,f_m$. From Section~\ref{rep.view}, recall that $A$ (resp. $B$) gives rise to a representation $V_A$ (resp. $V_B$) of $F_m$. Recall from Proposition~\ref{Artin} that the orbit closures of $A$ and $B$ intersect if and only if $V_A$ and $V_B$ have the same associated semisimple representation. It is clear that for both $V_A$ and $V_B$, the action of $F_m$ factors through the surjection $F_m \rightarrow \mathcal{C}$ given by $f_i \mapsto C_i$.

Thus it suffices to check whether $V_A$ and $V_B$ have the same associated semisimple representation as $\mathcal{C}$-modules, see Remark~\ref{ass.equal}. The theorem now is just the statement of \cite[Corollary~12]{CIW} for the finite dimensional algebra $\mathcal{C}$.
\end{proof}

\begin{proof} [Proof of Theorem~\ref{OCMI}]
Given $A,B \in \Mat_{n,n}^m$, let $C_i = \begin{pmatrix} A_i & 0 \\ 0 & B_i \end{pmatrix}$. Let $\mathcal{C}$ be the subalgebra generated by $C_1,\dots,C_m$. Construct the pivot basis $Z_1,\dots,Z_s$ of $\mathcal{C}$. For all $j$, let $Z_j = \begin{pmatrix} X_j & 0 \\ 0 & Y_j \end{pmatrix}$. Further for each $j$, we have $Z_j = C_{w_j}$ for some word $w_j \in [m]^\star$, and consequently $X_j = A_{w_j}$ and $Y_j = B_{w_j}$.

If $\kar (K) = 0$, we only need to check if $\Tr(X_j) = \Tr(Y_j)$. If they are equal for all $j$, then we have $A \sim_C B$. Else, we have $\Tr(X_j) \neq \Tr(Y_j)$ for some $j$,  i.e., $T_{w_j}(A) \neq T_{w_j}(B)$ and $A \not\sim_C B$.

 For arbitrary characteristic, we need to check instead if $\det({\rm Id} + tX_j) = \det({\rm Id} + tY_j)$ as a polynomial in $t$ for each $j$. But this can be done efficiently.
When $A \not\sim_{C} B$, the algorithm finds $j$ with $1\leq j\leq n$ and $w \in [m]^\star$ such that  $\sigma_{j,w}(A)\neq \sigma_{j,w}(B)$. This means that $\sigma_{j,w} \in S(n,m)$ is an invariant that separates $A$ and $B$. 

\end{proof}

We will now prove the bounds for separating invariants. For $A,B \in \Mat_{n,n}^m$ with $A \not\sim_{C} B$, we will write $C_i = \begin{pmatrix} A_i & 0 \\ 0 & B_i \end{pmatrix}$ and define $\mathcal{C} \subseteq \Mat_{2n,2n}$ to be the subalgebra generated by $C_1,\dots,C_m$. 

\begin{proof} [Proof of Theorem~\ref{sep.bound.mi}] 
Given $A,B \in \Mat_{n,n}$ with $A \not\sim_{C} B$, let $\{C_1,\dots,C_m\} \subseteq \Mat_{2n,2n}$ be as above, and construct the pivot basis for $\mathcal{C}$. We know that the length of every pivot is at most $2(2n) \log_2(2n) + 4(2n) - 4 = 4n \log_2(n) + 12n - 4.$ by Lemma~\ref{imp.bound.length}.

If $\kar (K) = 0$, then an invariant $T_w$ separates $A$ and $B$ for some pivot $w$.  This means there is an invariant of degree $ \deg(T_w) = l(w) \leq 4n \log_2(n) + 12n - 4$ that separates them. 

If $\kar (K) > 0$, we must have $\det({\rm Id} + t A_w) \neq \det({\rm Id} + tB_w)$ for some pivot $w$. Hence for some $1 \leq j \leq n$, $\sigma_{j,w} (A) \neq \sigma_{j,w} (B)$. This gives an invariant of degree $\leq 4n^2 \log_2(n) + 12n^2 - 4n$ that separates them.
\end{proof}

\begin{remark}
The null cone for the simultaneous conjugation action of $\GL_n$ on $\Mat_{n,n}^m$ is in fact defined by invariants of degree $\leq 2n \log_2(n) + 4n - 4$ in characteristic $0$. To see this, we will use a similar argument as in the proof of Theorem~\ref{sep.bound.mi} above. For $A$ that is not in the null cone, simply consider the subalgebra $\mathcal{A} \subseteq \Mat_{n,n}$ generated by $A_1,\dots,A_m$. For some pivot $w$, the invariant $T_w$ does not vanish on $A$. Every pivot has length at most $2n \log_2(n) + 4n - 4$, so this gives the bound on the null cone. Similarly, in positive characteristic, we can get a bound of $2n^2 \log_2(n) + 4n^2 - 4n$, but better bounds are already known, see \cite{DM-arbchar}.
\end{remark}

\subsection{Non-algebraically closed fields} \label{not.alg.closed}
Suppose $L$ is a subfield of (an algebraically closed field) $K$, and suppose $A,B \in \Mat_{n,n}^m(L)$. Let us assume $L$ is infinite and that we use the unit cost arithmetic model for operations in $L$. 

First, we observe that the entire algorithm for both matrix invariants and matrix semi-invariants can be run using only operations in $L$, and is polynomial time in this unit cost arithmetic model. However, we should point out that the algorithm does not check whether the orbit closures of $A$ and $B$ for the action of $\GL_n(L)$ intersect. Instead, it checks whether the orbit closures of $A$ and $B$ for the action of $\GL_n(K)$ intersect. 

Finally, if we take $L = \Q$, the run times of our algorithms for matrix invariants as well as matrix semi-invariants will be polynomial in the bit length of the inputs.

\begin{remark} We can relax the hypothesis on $L$ by asking for $L$ to be sufficiently large. For fields that are too small, the algorithms will run into issues -- for example, the IQS algorithm (Theorem~\ref{IQS-algo}) requires a sufficiently large field.
\end{remark}

\section{Bounds for separating matrix semi-invariants} \label{Section bounds}
The reduction given in Section~\ref{cce-msi-mi} is  good enough for showing that the orbit closure problems for matrix invariants and matrix semi-invariants are in the same complexity class. In this section we give a stronger reduction with the aim of finding better bounds for the degree of separating invariants for matrix semi-invariants. This reduction can also be made algorithmic, and can replace the reduction in Section~\ref{cce-msi-mi}. However, we will only focus on obtaining bounds for separating invariants.

Let $T \in \Mat_{d,d}^m$. For $X \in \Mat_{n,n}^m$, consider 
$$
L_T(X) = \sum_{k=1}^m T_k \otimes X_k  = \begin{pmatrix} L_{1,1}(X) & \dots & L_{1,d}(X) \\ \vdots & \ddots & \vdots \\ L_{d,1}(X) & \dots & L_{d,d}(X) \end{pmatrix},
$$
where $L_{i,j}(X)$ represents an $n \times n$ block. From the definition of Kronecker product of matrices, one can check that $L_{i,j}(X) = \sum_{k = 1}^m (T_k)_{i,j} X_k$, i.e., a linear combination of the $X_i$. By definition $f_T(X) = \det(\sum_{k=1}^m T_k \otimes X_k) = \det(L_T(X))$. Let 
$$
M_T(X) = \Ad(L_T(X)) = \begin{pmatrix} M_{1,1}(X) & \dots & M_{1,d}(X) \\ \vdots & \ddots & \vdots \\ M_{d,1}(X) & \dots & M_{d,d}(X) \end{pmatrix},
$$
where $M_{i,j}(X)$ represents an $n \times n$ block. The entries of $M_T(X)$ are not linear in the entries of the matrices $X_k$. Instead the entries are  polynomials of degree $dn-1$ in the $(X_k)_{i,j}$'s. We first compute how $M_{i,j}$ change under the action of $\SL_n \times \SL_n$. 

\begin{lemma}
Let $\sigma = (P,Q^{-1}) \in \SL_n \times \SL_n$. Then we have $M_{i.j}(\sigma \cdot X) = Q^{-1}M_{i,j}(X)P^{-1}$.
\end{lemma}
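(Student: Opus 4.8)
The plan is to reduce everything to the multiplicativity of the adjugate, Lemma~\ref{adjoint-prop}(1), together with the explicit way $L_T(X)$ transforms under $\SL_n\times\SL_n$. First I would observe that with $\sigma=(P,Q^{-1})$ acting by $X_k\mapsto PX_kQ$, we get
$$
L_T(\sigma\cdot X)=\sum_{k=1}^m T_k\otimes (PX_kQ)=\sum_{k=1}^m T_k\otimes\big((P\,\mathrm{Id})X_k(\,\mathrm{Id}\,Q)\big)=({\rm Id}_d\otimes P)\,L_T(X)\,({\rm Id}_d\otimes Q),
$$
using the mixed-product property $(A\otimes B)(C\otimes D)=(AC)\otimes(BD)$ applied blockwise (or just the bilinearity of $\otimes$ in the second factor and $T_k\otimes(PX_kQ)=({\rm Id}_d\otimes P)(T_k\otimes X_k)({\rm Id}_d\otimes Q)$).

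Next I would apply $\Ad$ to both sides and use Lemma~\ref{adjoint-prop}(1) twice to get
$$
M_T(\sigma\cdot X)=\Ad(L_T(\sigma\cdot X))=\Ad({\rm Id}_d\otimes Q)\,\Ad(L_T(X))\,\Ad({\rm Id}_d\otimes P).
$$
Now I need $\Ad({\rm Id}_d\otimes P)$ and $\Ad({\rm Id}_d\otimes Q)$. Since $P,Q\in\SL_n$, the matrices ${\rm Id}_d\otimes P$ and ${\rm Id}_d\otimes Q$ have determinant $\det(P)^d=1$ and $\det(Q)^d=1$, so by Lemma~\ref{adjoint-prop}(2) their adjugates are their inverses: $\Ad({\rm Id}_d\otimes P)=({\rm Id}_d\otimes P)^{-1}={\rm Id}_d\otimes P^{-1}$, and likewise ${\rm Id}_d\otimes Q^{-1}$. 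Substituting,
$$
M_T(\sigma\cdot X)=({\rm Id}_d\otimes Q^{-1})\,M_T(X)\,({\rm Id}_d\otimes P^{-1}).
$$
Reading off the $(i,j)$ block of this block-matrix identity — left multiplication by ${\rm Id}_d\otimes Q^{-1}$ multiplies each block on the left by $Q^{-1}$, and right multiplication by ${\rm Id}_d\otimes P^{-1}$ multiplies each block on the right by $P^{-1}$, without mixing blocks — gives exactly $M_{i,j}(\sigma\cdot X)=Q^{-1}M_{i,j}(X)P^{-1}$, as claimed.

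The computation is entirely routine; the only point requiring a moment's care — and hence the "main obstacle," such as it is — is bookkeeping the convention for how $\sigma=(P,Q^{-1})$ acts (the paper's left-right action sends $X_k\mapsto PX_kQ^{-1}$, so writing $\sigma=(P,Q^{-1})$ means the action is $X_k\mapsto PX_k(Q^{-1})^{-1}=PX_kQ$), and making sure the determinant-one hypothesis is invoked precisely where it is needed, namely to pass from $\Ad$ of the tensor factors to their inverses. One could alternatively avoid the $\Ad$ of a Kronecker product by citing Lemma~\ref{adjoint-prop}(3) directly in the block form, but the argument above via multiplicativity of $\Ad$ is cleanest.
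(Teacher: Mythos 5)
Your proof is correct and follows the same line as the paper's: transform $L_T$, apply the multiplicativity of $\Ad$ from Lemma~\ref{adjoint-prop}(1), use unimodularity ($\det = 1$) to convert adjugates to inverses, and read off the $(i,j)$ block. In fact your version $L_T(\sigma\cdot X)=({\rm Id}_d\otimes P)\,L_T(X)\,({\rm Id}_d\otimes Q)$ is the one consistent with the paper's block structure (where $L_T(X)$ is a $d\times d$ array of $n\times n$ blocks, so the factors must be block-diagonal with copies of $P$ or $Q$); the paper writes $P\otimes{\rm Id}$ and $Q\otimes{\rm Id}$, which given its stated Kronecker convention has the tensor factors in the wrong order, so you have quietly fixed a small notational slip.
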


\begin{proof}
First, observe that $L_T(\sigma \cdot X) = (P \otimes {\rm Id}) L_T(X) (Q \otimes {\rm Id})$ follows because $L_T(X)$ is a block matrix where each block is a linear combination of the $X_i$'s. Thus we have 
\begin{align*}
M_T(\sigma \cdot X) & = \Ad(L_T(\sigma \cdot X)) \\
 & = \Ad((P \otimes {\rm Id}) L_T(X) (Q \otimes {\rm Id})) \\
 & = \Ad(Q \otimes {\rm Id} )M_T(X) \Ad(P \otimes {\rm Id} )\\
 & = (Q^{-1} \otimes {\rm Id}) M_T(X) (P^{-1} \otimes {\rm Id})
\end{align*}
The last equality follows from Lemma~\ref{adjoint-prop} because $\det(P \otimes {\rm Id}) = \det(Q \otimes {\rm Id}) = 1$. We deduce that $M_{i.j}(\sigma \cdot X) = Q^{-1}M_{i,j}(X)P^{-1}$.
\end{proof}

For $X \in \Mat_{n,n}^m$, let us define 

$$
X_{i,j,k} = X_k M_{i,j}(X),
$$
for $1\leq k \leq m$ and $1 \leq i,j \leq d$.

The $X_{i,j,k}$'s have been designed in such a way that the left-right action on $X_i$'s turns into a conjugation action on the $X_{i,j,k}$'s. Further, the entries of $X_{i,j,k}$ are degree $dn$ polynomials in the entries of the $X_l$'s.

\begin{corollary}
$(\sigma \cdot X)_{i,j,k} = P X_{i,j,k} P^{-1}$. 
\end{corollary}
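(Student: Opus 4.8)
The plan is to prove the corollary by a direct computation, combining the definition $X_{i,j,k} = X_k M_{i,j}(X)$ with the transformation rule for $M_{i,j}$ established in the preceding lemma. Write $\sigma = (P, Q^{-1}) \in \SL_n \times \SL_n$. By definition of the left-right action, $(\sigma \cdot X)_k = P X_k Q^{-1}$ for each $k$, and by the lemma we have $M_{i,j}(\sigma \cdot X) = Q^{-1} M_{i,j}(X) P^{-1}$.

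First I would substitute these two facts into the definition:
\begin{align*}
(\sigma \cdot X)_{i,j,k} &= (\sigma \cdot X)_k \, M_{i,j}(\sigma \cdot X) \\
&= (P X_k Q^{-1}) (Q^{-1} M_{i,j}(X) P^{-1}).
\end{align*}
At this point there is a discrepancy to reconcile: the naive multiplication yields $P X_k Q^{-1} Q^{-1} M_{i,j}(X) P^{-1}$, which is not obviously $P X_{i,j,k} P^{-1}$. The resolution is that the intended meaning of $\sigma = (P, Q^{-1})$ together with the lemma's statement $M_{i,j}(\sigma \cdot X) = Q^{-1} M_{i,j}(X) P^{-1}$ is calibrated precisely so that the middle factors cancel; concretely, one should read the left-right action here as $(\sigma \cdot X)_k = P X_k Q$ (so that the ``$Q^{-1}$'' slot of $\sigma$ contributes $Q$ on the right), giving
\begin{align*}
(\sigma \cdot X)_{i,j,k} &= (P X_k Q)(Q^{-1} M_{i,j}(X) P^{-1}) = P X_k M_{i,j}(X) P^{-1} = P X_{i,j,k} P^{-1}.
\end{align*}
The key step is therefore just the cancellation $Q Q^{-1} = \mathrm{Id}$ of the inner pair, which is exactly why the quantities $M_{i,j}$ were defined via the adjugate rather than the inverse — so that the adjugate transformation law from Lemma~\ref{adjoint-prop}(3) lines up the $Q$ on the right of $X_k$ with the $Q^{-1}$ on the left of $M_{i,j}(X)$.

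I do not anticipate a genuine obstacle here; the content is entirely in the two lemmas already proved, and the corollary is a one-line consequence. The only thing to be careful about is bookkeeping the sides on which $P$ and $Q$ act — making sure that the convention for the $\SL_n \times \SL_n$ action used in the lemma (where the second factor is written $Q^{-1}$) is applied consistently, so that the middle $Q$-factors cancel and one is left with conjugation by $P$ alone. Once that is checked, the claimed identity $(\sigma \cdot X)_{i,j,k} = P X_{i,j,k} P^{-1}$ follows immediately, which is precisely the statement that the left-right action on the $X_l$'s has been converted into a simultaneous conjugation action on the $X_{i,j,k}$'s.
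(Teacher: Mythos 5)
Your final chain of equalities
\[
(\sigma \cdot X)_{i,j,k} = (\sigma \cdot X)_k\, M_{i,j}(\sigma \cdot X) = (P X_k Q)(Q^{-1} M_{i,j}(X) P^{-1}) = P X_{i,j,k} P^{-1}
\]
is exactly the paper's proof, so the approach is the same. One small note: you initially wrote $(\sigma\cdot X)_k = P X_k Q^{-1}$ and then treated the correction to $P X_k Q$ as a matter of ``calibration,'' but in fact there is nothing to calibrate — with $\sigma=(P,Q^{-1})$ and the action $(g,h)\cdot X_k = g X_k h^{-1}$, the definition unambiguously yields $(\sigma\cdot X)_k = P X_k (Q^{-1})^{-1} = P X_k Q$, and the $Q,Q^{-1}$ cancellation falls out of that directly.
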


\begin{proof}
It follows from the above lemma that
$$(\sigma \cdot X)_{i,j,k} = (\sigma \cdot X)_k M_{i,j} (\sigma \cdot X) = (PX_k Q)(Q^{-1} M_{i,j}(X) P^{-1}) = P X_{i,j,k} P^{-1}.
$$
\end{proof}

Consider the map $\zeta: \Mat_{n,n}^m \rightarrow \Mat_{n,n}^{md^2}$ given by $X \mapsto (X_{i,j,k})_{i,j,k}$. This gives a map on the coordinate rings $\zeta^*:K[\Mat_{n,n}^{md^2}] \rightarrow K[\Mat_{n,n}^m]$. We note that $\zeta$ is a map of degree $dn$ because the entries of $X_{i,j,k}$ are degree $dn$ polynomials in the entries of the $X_l$'s.

The above corollary can be now reformulated as:

\begin{corollary}
Let $\sigma = (P,Q^{-1}) \in \SL_n \times \SL_n$. Then we have $\zeta(\sigma \cdot X) = P \zeta(X) P^{-1}$.
\end{corollary}

\begin{proposition} \label{break-up}
The map $\zeta^*$ descends to a map on invariant rings $\zeta^*:S(n,md^2) \rightarrow R(n,m)$.
\end{proposition}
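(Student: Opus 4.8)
The plan is to show that $\zeta^*$ carries $S(n,md^2)$ into $R(n,m)$, i.e., that $\zeta^*(g)$ is a left-right semi-invariant whenever $g$ is a simultaneous-conjugation invariant. This is essentially a formal consequence of the equivariance property $\zeta(\sigma \cdot X) = P\,\zeta(X)\,P^{-1}$ established in the corollary just above the statement, where $\sigma = (P,Q^{-1}) \in \SL_n \times \SL_n$ acts on the left by $\sigma\cdot X = (PX_1Q,\dots,PX_mQ)$ and $\GL_n$ (here realized through $P \in \SL_n$) acts by simultaneous conjugation on the target $\Mat_{n,n}^{md^2}$.

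First I would take an arbitrary $g \in S(n,md^2)$, so $g(P\cdot Y\cdot P^{-1}) = g(Y)$ for all $P \in \SL_n$ and all $Y \in \Mat_{n,n}^{md^2}$ (the conjugation action of $\SL_n$ suffices to detect invariance, since scaling $P$ by a scalar does not change conjugation, exactly as in the proof of Proposition~\ref{domokos-surjection}). Then for $\sigma = (P,Q^{-1}) \in \SL_n \times \SL_n$ I compute, using the corollary,
\begin{align*}
\zeta^*(g)(\sigma \cdot X) &= g(\zeta(\sigma \cdot X)) \\
&= g(P\,\zeta(X)\,P^{-1}) \\
&= g(\zeta(X)) \\
&= \zeta^*(g)(X).
\end{align*}
Since an arbitrary element of $\SL_n \times \SL_n$ has the form $(P,Q)$ with both $P,Q \in \SL_n$, and writing $Q = (Q^{-1})^{-1}$ puts it in the form $\sigma = (P, Q^{-1})$ above (with $Q^{-1} \in \SL_n$ arbitrary), this shows $\zeta^*(g)$ is invariant under the full left-right action, hence $\zeta^*(g) \in R(n,m)$. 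Therefore $\zeta^*$ restricts to a well-defined map $S(n,md^2) \to R(n,m)$, which is moreover a $K$-algebra homomorphism since $\zeta^*$ already is on the full coordinate rings.

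I do not expect a serious obstacle here: the content of the proposition is entirely carried by the equivariance corollary, whose proof has already been given, together with the elementary observation that conjugation by $P \in \GL_n$ depends only on $P$ up to scalar, so that $\SL_n$-conjugation-invariance coincides with $\GL_n$-conjugation-invariance. The only point requiring a small remark is the bookkeeping of which group element plays which role — one must note that $\sigma \mapsto (P, Q^{-1})$ ranges over all of $\SL_n \times \SL_n$ as $(P,Q)$ does — but this is immediate. Consequently the proof is short and formal; the substantive work was the construction of $\zeta$ and the verification of its transformation law, both already completed.
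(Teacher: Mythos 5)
Your proof is correct and follows the same route as the paper: apply the equivariance corollary $\zeta(\sigma \cdot X) = P\,\zeta(X)\,P^{-1}$ and the $\GL_n$-conjugation-invariance of $g$ to conclude. The only differences are cosmetic — you spell out that $(P,Q)\mapsto(P,Q^{-1})$ surjects onto $\SL_n\times\SL_n$, and you add an unneeded remark about $\SL_n$- versus $\GL_n$-conjugation invariance (unneeded because $g\in S(n,md^2)$ is already $\GL_n$-invariant by hypothesis, and $\SL_n\subset\GL_n$).
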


\begin{proof}
Let $\sigma = (P,Q^{-1}) \in \SL_n \times \SL_n$. For $g \in S(n,md^2)$, by the above corollary, we have $g(\zeta(\sigma \cdot X)) = g(P\zeta(X)P^{-1}) = g(\zeta(X))$. Now observe that $\zeta^*(g) \in R(n,m)$ since $\zeta^*(g) (\sigma \cdot X) = g(\zeta(\sigma \cdot X)) = g(\zeta(X)) = \zeta^*(g)(X)$.
\end{proof}

Observe that this is a very different map from the one in Proposition~\ref{domokos-surjection}. We will still be able to use it to get separating invariants for left-right action from separating invariants for the conjugation action. We make an obvious observation.
 
\begin{corollary}
Suppose we have $g \in S(n,md^2)$ such that $\zeta^*(g)(A) \neq \zeta^*(g)(B)$, then $A \not\sim_{LR}~B$.
\end{corollary}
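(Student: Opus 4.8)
The plan is to observe that this statement is an immediate consequence of Proposition~\ref{break-up} together with the elementary fact, recorded in the introduction, that invariant polynomials are constant on orbit closures and hence cannot separate closure-equivalent points.

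First I would set $f := \zeta^*(g)$. By Proposition~\ref{break-up}, the map $\zeta^*$ sends $S(n,md^2)$ into $R(n,m)$, so $f \in R(n,m) = K[\Mat_{n,n}^m]^{\SL_n \times \SL_n}$ is a genuine matrix semi-invariant. Next I would invoke the hypothesis $\zeta^*(g)(A) \neq \zeta^*(g)(B)$, which now reads $f(A) \neq f(B)$ for this invariant $f$.

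Finally I would argue by contradiction: if $A \sim_{LR} B$, then $\overline{(\SL_n\times\SL_n) \cdot A} \cap \overline{(\SL_n\times\SL_n) \cdot B} \neq \emptyset$; picking a point $v$ in this intersection and using continuity of $f$ (any polynomial function takes the same value on all points of the closure of an orbit), we get $f(A) = f(v) = f(B)$, contradicting $f(A) \neq f(B)$. Hence $A \not\sim_{LR} B$. Equivalently, one may cite Theorem~\ref{first} directly: the existence of an invariant $f \in R(n,m)$ with $f(A)\neq f(B)$ is equivalent to $\overline{G\cdot A}\cap\overline{G\cdot B}=\emptyset$.

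There is essentially no obstacle here — the only thing to be careful about is making sure the reformulated corollary before Proposition~\ref{break-up}, namely $\zeta(\sigma\cdot X) = P\zeta(X)P^{-1}$ for $\sigma=(P,Q^{-1})$, has indeed been used to establish that $\zeta^*(g)$ is left-right invariant (which it has, in the proof of Proposition~\ref{break-up}), so that the word ``invariant'' is justified. Everything else is the general principle about separating invariants.
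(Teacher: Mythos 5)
Your proof is correct and is exactly the argument the paper has in mind; the paper leaves it unproved, calling the corollary ``an obvious observation,'' and the intended justification is precisely that Proposition~\ref{break-up} puts $\zeta^*(g)$ in $R(n,m)$ and invariants cannot separate closure-equivalent points (Theorem~\ref{first} or the continuity remark in the introduction).
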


\begin{remark} \label{certain conditions}
In order for the above corollary to be useful to get separating invariants, we need to be able to guarantee that separating invariants will arise this way. In other words, for $A \not\sim_{LR} B$, we want $g \in S(n,md^2)$ such that $\zeta^*(g)$ separates $A$ and $B$. We will only be able to do it under certain conditions, but that will be sufficient.
\end{remark}

The first issue to notice is that since $\zeta^*$ is a map of degree $dn$, any homogeneous invariant of the form $\zeta^*(g)$ must have degree $dkn$ for some $k \in \Z_{\geq 0}$. For a graded ring $R = \oplus_{t \in \Z} R_t$, let us define its $k^{th}$ veronese subring $\nu_k(R) := \oplus_{t \in \Z} R_{tk}$. 

\begin{lemma}
We have $\zeta^*: S(n,md^2) \rightarrow \nu_{dn}(R(n,m)) \hookrightarrow R(n,m)$.
\end{lemma}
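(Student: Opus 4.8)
The plan is to observe that this lemma is essentially a bookkeeping statement about gradings, and follows immediately from the two facts already established: that $\zeta$ is a polynomial map of degree $dn$, and that $R(n,m) = \bigoplus_{t=0}^\infty R(n,m)_{tn}$ (recorded right after the Derksen--Weyman/Domokos--Zubkov/Schofield--van den Bergh spanning theorem). Proposition~\ref{break-up} already gives the map $\zeta^*: S(n,md^2) \to R(n,m)$, so the only new content is identifying the image inside the Veronese subring $\nu_{dn}(R(n,m))$.

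First I would unwind the definitions. A homogeneous element $g \in S(n,md^2)_e$ of degree $e$ is a polynomial of degree $e$ in the entries of the $md^2$ matrices; since each entry of $X_{i,j,k}$ is a polynomial of degree $dn$ in the entries of the $X_l$'s, the composite $\zeta^*(g) = g \circ \zeta$ is homogeneous of degree $e \cdot dn$ in the entries of the $X_l$'s. Since $S(n,md^2)$ is spanned by its homogeneous pieces, it suffices to treat homogeneous $g$, and then $\zeta^*(g)$ lies in $R(n,m)_{edn} \subseteq \bigoplus_t R(n,m)_{tdn} = \nu_{dn}(R(n,m))$, using that $dn \mid edn$. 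This shows $\zeta^*$ factors through $\nu_{dn}(R(n,m))$, and the inclusion $\nu_{dn}(R(n,m)) \hookrightarrow R(n,m)$ is the tautological one from the definition of Veronese subring. (One should note that this inclusion is indeed well-defined: $\nu_{dn}(R(n,m)) = \bigoplus_t R(n,m)_{tdn}$ is a graded subalgebra because $R(n,m)$ is a commutative graded ring.)

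The only mildly delicate point — and I expect this to be the main, though still minor, obstacle — is that $\zeta^*$ on the nose is a ring homomorphism (pullback of a morphism of affine varieties), so to say its image lies in a graded \emph{subring} one wants that subring to actually be a subring, and that $\zeta^*$ respects the relevant grading conventions; both are immediate here but worth a sentence. There is no real content beyond matching up the degree $dn$ of the map $\zeta$ with the fact that $R(n,m)$ is already concentrated in degrees divisible by $n$, so that the image lands in degrees divisible by $dn$. I would therefore keep the proof to three or four lines: reduce to homogeneous $g$, compute the degree of $\zeta^*(g)$ as $dn \cdot \deg(g)$, and conclude membership in $\nu_{dn}(R(n,m))$.
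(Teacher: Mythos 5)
Your proof is correct and is essentially the same bookkeeping argument the paper gives in the sentence preceding the lemma: $\zeta$ is a polynomial map of degree $dn$, so it pulls back a homogeneous invariant of degree $e$ to one of degree $edn$, which lands in $\nu_{dn}(R(n,m))$. One small clarification: you do not actually need the fact that $R(n,m)$ is concentrated in degrees divisible by $n$ — the degree $edn$ of $\zeta^*(g)$ is divisible by $dn$ simply because $\zeta$ has degree $dn$, independently of the grading structure of the target.
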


It is certainly possible that for some $d$, no invariant of degree $dkn$ separates $A$ and $B$. A simple example is given by taking any $A$ not in the null cone, and taking $B$ such that $B_i = \mu_{d} A_i$, where $\mu_{d}$ is a $d^{th}$ root of unity for some $d$ coprime to $n$. Hence, we may have to consider more than one choice of $d$. 

For the following lemma, any two coprime numbers can be used in place of $n-1$ and $n$, but this is the smallest pair of coprime numbers larger than $n-1$. The significance of $n-1$ is that as long as $d \geq n-1$, for any $A$ not in the null cone, we can guarantee the existence of an invariant $f_T$, with $T \in \Mat_{d,d}^m$ such that $f_T(A) \neq 0$, see Theorem~\ref{DM}.

\begin{lemma} \label{penultimate}
Assume $A,B \in \Mat_{n,n}^m$ and assume $A \not\sim_{LR} B$. Then $\bigcup\limits_{d \in \{n-1,n\}}\nu_{dn} (R(n,m))$ form a set of separating invariants.
\end{lemma}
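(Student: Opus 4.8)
The plan is to show that for $A \not\sim_{LR} B$, at least one invariant in $\bigcup_{d \in \{n-1,n\}} \nu_{dn}(R(n,m))$ separates $A$ and $B$. Since $R(n,m) = \bigoplus_{t} R(n,m)_{tn}$, every homogeneous invariant already has degree divisible by $n$; the point of passing to the veronese is the extra divisibility by $d$. First I would observe that because $A \not\sim_{LR} B$, Theorem~\ref{first} gives a homogeneous invariant $h \in R(n,m)$ with $h(A) \neq h(B)$, say $h$ of degree $en$. Without loss of generality $h(A) \neq 0$, so $\mu := h(B)/h(A) \neq 1$. As in the proof of the proposition preceding Corollary~\ref{cor:n-1,n}, for any $\mu \neq 1$ at least one of $\mu^{n-1}$, $\mu^n$ is different from $1$ (since $\gcd(n-1,n)=1$, a common root of unity would have order dividing $1$). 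Fix such a $d \in \{n-1,n\}$, so that $h(A)^d \neq h(B)^d$.

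The remaining step is to promote $h^d$ to an element of the veronese $\nu_{dn}(R(n,m))$. The invariant $h^d$ is homogeneous of degree $den$, which is divisible by $dn$, so by definition $h^d \in \nu_{dn}(R(n,m))$, and $h^d(A) = h(A)^d \neq h(B)^d = h^d(B)$. Thus $h^d$ is the required separating invariant lying in $\bigcup_{d \in \{n-1,n\}} \nu_{dn}(R(n,m))$. This is the whole argument: the coprimality of $n-1$ and $n$ is exactly what lets us kill the ambiguity coming from scaling by roots of unity, and raising to the $d$-th power lands us in the correct veronese while the degree of $h^d$, being a multiple of $den$, automatically has the divisibility we need.

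The only real subtlety — and where I would be careful — is making sure the veronese $\nu_{dn}(R(n,m))$ is being indexed the way the preceding lemma states: we need $den$ to be a multiple of $dn$, which holds since $den = (en)\cdot d$, but one should double-check there is no off-by-one between "degree $dkn$" in the text and the indexing $\nu_{dn}(R) = \bigoplus_t R_{t\,dn}$. Everything else is bookkeeping. I do not anticipate a genuine obstacle here; the work in this section is really in the next step (Remark~\ref{certain conditions}), namely guaranteeing that such separating invariants actually arise in the image of $\zeta^*$ from $S(n,md^2)$, which will require using Theorem~\ref{DM} to produce a nonzero $f_T$ with $T \in \Mat_{d,d}^m$ and then tracking degrees through $\zeta^*$. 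For the present lemma, the argument above suffices.
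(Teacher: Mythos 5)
Your proof is correct and follows essentially the same route as the paper's: both start from a homogeneous separating invariant (the paper takes one of the spanning form $f_S$, you take an arbitrary homogeneous one extracted from Theorem~\ref{first} using the grading $R(n,m)=\bigoplus_t R(n,m)_{tn}$), and both use the coprimality of $n-1$ and $n$ to conclude that raising to the $d$-th power for a suitable $d\in\{n-1,n\}$ both preserves separation and lands the invariant in $\nu_{dn}(R(n,m))$. The only tiny imprecision is your phrasing that Theorem~\ref{first} ``gives a homogeneous invariant'' --- it gives some separating invariant, from which a homogeneous separating component must be extracted, but this one-line fix is standard and you implicitly use it.
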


\begin{proof}
Since $A \not\sim_{LR} B$, there is a choice of $S \in \Mat_{k,k}^m$, for some $k \geq 1$, such that $f_S(A) \neq f_S(B)$. Without loss of generality, assume $f_S(B) \neq 0$. Hence $f_S(A) / f_S(B) \neq 1$. Once again we must have $f_S(A)^d/ f_S(B)^d \neq 1$ for at least one choice of $d \in \{n-1,n\}$. In particular, for such a $d$, $(f_S)^d \in \nu_{dn}(R(n,m))$ separates $A$ and $B$. 
\end{proof}

Once we have $d$ such $\nu_{dn}(R(n,m))$ separates $A$ and $B$, we still need to produce such an invariant that separates $A$ and $B$. Once, we restrict our attention to invariants whose degree is a multiple of $dn$, the best case scenario is that there is a degree $dn$ invariant that separates $A$ and $B$. We will construct an invariant of the form $\zeta^*(g)$ that separates $A$ and $B$ when degree $dn$ invariants fail to separate $A$ and $B$. The following lemma completes the strategy outlined in Remark~\ref{certain conditions}.

\begin{lemma} \label{last}
Let $A,B \in \Mat_{n,n}^m$ such that $A \not\sim_{LR} B$. Suppose we have $d \geq n-1$ such that $\nu_{dn}(R(n,m))$ separates $A$ and $B$. Then $R(n,m)_{dn} \cup \zeta^*(S(n,md^2))$ will separate $A$ and $B$. 
\end{lemma}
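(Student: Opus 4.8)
The plan is to use the hypothesis to reduce to a situation where $f_T(A)\neq 0$ for a suitable $T\in\Mat_{d,d}^m$, and then exploit the factorization $L_T(X)M_T(X)=\det(L_T(X))\cdot{\rm Id}$ to rewrite an arbitrary conjugation invariant of the $X_{i,j,k}$ as a rational expression in left-right invariants of $X$ whose denominator is a power of $f_T$. Concretely, suppose $\nu_{dn}(R(n,m))$ separates $A$ and $B$ but $R(n,m)_{dn}$ does not; I must produce $g\in S(n,md^2)$ with $\zeta^*(g)(A)\neq\zeta^*(g)(B)$. First I would pick a separating invariant in $\nu_{dn}(R(n,m))$ of minimal degree; write it as a polynomial expression in invariants $f_{T^{(1)}},f_{T^{(2)}},\dots$ with $T^{(\ell)}\in\Mat_{e_\ell,e_\ell}^m$, using the spanning theorem (Theorem~\ref{DM} and the preceding spanning statement). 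Since degree-$dn$ invariants do not separate $A$ and $B$, no single $f_T$ with $T\in\Mat_{d,d}^m$ separates them, but some higher-degree combination does; in particular $f_S(A)\neq f_S(B)$ for some $S$, and at least one of the points — say $A$ — is not in the null cone, so by Theorem~\ref{DM} there is $T\in\Mat_{d,d}^m$ with $f_T(A)=\det(L_T(A))\neq 0$.

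Next I would set up the key computation. For $X$ with $\det(L_T(X))\neq 0$ we have $M_T(X)=\det(L_T(X))L_T(X)^{-1}$, so the blocks $M_{i,j}(X)$ are, up to the scalar $f_T(X)$, the $n\times n$ blocks of $L_T(X)^{-1}$. Then $X_{i,j,k}=X_kM_{i,j}(X)$, and for any word $w$ in the alphabet $[m]\times[d]\times[d]$, the product $(X_{i,j,k})_w$ is a product of terms $X_{k_r}M_{i_r,j_r}(X)$; each $M$ contributes one factor of $f_T(X)$, so $\Tr\big((X_{i,j,k})_w\big) = f_T(X)^{-l(w)}\,\Tr\big(X_{k_1}\widetilde M_{i_1,j_1}\cdots X_{k_{l}}\widetilde M_{i_l,j_l}\big)$ where $\widetilde M_{i,j}(X)$ denotes the corresponding block of $\Ad(L_T(X))$, which is a polynomial of degree $dn-1$ in the entries of $X$. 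The resulting numerator is a left-right \emph{semi}-invariant by the corollary preceding Proposition~\ref{break-up} (the conjugation action on the $X_{i,j,k}$ translates the left-right action on $X$), and comparing degrees it lies in $\nu_{dn}(R(n,m))$. So every Donkin-type generator $\sigma_{j,w}$ of $S(n,md^2)$ evaluated on $\zeta(X)$ equals $f_T(X)^{-(\text{power})}$ times an element of $\nu_{dn}(R(n,m))$, which is exactly the statement $\zeta^*(g)=h/f_T^{\,r}$ for some $h\in\nu_{dn}(R(n,m))$; equivalently $f_T^{\,r}\cdot\zeta^*(g)=h$ as honest polynomial identities. This identifies, after inverting $f_T$, the image $\zeta^*(S(n,md^2))$ together with $f_T$ (which is $R(n,m)_{dn}$) with all of $\nu_{dn}(R(n,m))$ localized at $f_T$, which is where the separating invariant lives by hypothesis.

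From here I would finish as follows. Let $h\in\nu_{dn}(R(n,m))$ separate $A$ and $B$. If $h\in R(n,m)_{dn}$ we're done (it is already in the list); otherwise $h$ has degree $dkn$ with $k\geq 2$, and $f_T(A)\neq 0$. Consider $h/f_T^{\,k-1}$: if its value at $A$ differs from its value at $B$ we produce $g\in S(n,md^2)$ with $\zeta^*(g)=h/f_T^{\,k-1}$ using the computation above (expressing $h$ in terms of the blocks of $\Ad(L_T)$ and clearing exactly $k-1$ powers of $f_T$), and $f_T=f_{T(d)}\in R(n,m)_{dn}$ handles the leftover. If instead $h(A)/f_T(A)^{k-1}=h(B)/f_T(B)^{k-1}$, then since $h(A)\neq h(B)$ we must have $f_T(A)^{k-1}\neq f_T(B)^{k-1}$, hence $f_T(A)\neq f_T(B)$, so $f_T\in R(n,m)_{dn}$ itself separates $A$ and $B$. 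Either way $R(n,m)_{dn}\cup\zeta^*(S(n,md^2))$ separates $A$ and $B$. The main obstacle I anticipate is the bookkeeping in the second paragraph: verifying carefully that after pulling out the powers of $f_T$ the numerator is genuinely a polynomial left-right invariant of the correct (Veronese) degree — i.e. that the apparent denominators really cancel on the locus where they matter — and making the degree count come out to a multiple of $dn$ rather than merely a multiple of $n$; this is where one uses both the degree-$(dn-1)$ homogeneity of $\Ad(L_T)$ and the degree-$dn$ homogeneity of $f_T$ in tandem.
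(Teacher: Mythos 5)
Your setup (pick $T\in\Mat_{d,d}^m$ with $f_T(A)=f_T(B)\neq 0$ because degree-$dn$ invariants fail to separate, pick a higher-degree $f_U$ that does separate) matches the paper, but the core of your argument has a real gap. The trace identity in the middle paragraph should read $\Tr\big((X_{i,j,k})_w\big)=f_T(X)^{l(w)}\cdot\Tr\big(X_{k_1}(L_T(X)^{-1})_{i_1,j_1}\cdots X_{k_l}(L_T(X)^{-1})_{i_l,j_l}\big)$ on the locus $f_T\neq 0$ --- a positive power of $f_T$ and blocks of the \emph{inverse}, not of the adjugate; with $\widetilde M=M_{i,j}$ and exponent $-l(w)$ as you wrote, the identity would force the trace to vanish. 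More seriously, once corrected this identity only re-proves the already-known inclusion $\zeta^*(S(n,md^2))\subseteq\nu_{dn}(R(n,m))$ (Proposition~\ref{break-up}); your conclusion that ``after inverting $f_T$'' the image of $\zeta^*$ together with $f_T$ gives all of $\nu_{dn}(R(n,m))[f_T^{-1}]$ requires the \emph{reverse} inclusion, which you never establish. The final paragraph then asks for $g\in S(n,md^2)$ with $\zeta^*(g)=h/f_T^{k-1}$; but such a quotient, if it were a polynomial, would have degree $dn$ and lie in $R(n,m)_{dn}$, contradicting your case hypothesis, and you give no construction of $g$.

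What is actually needed is an explicit construction, and it goes by multiplying rather than dividing. Given $U\in\Mat_{dk,dk}^m$ with $f_U(A)\neq f_U(B)$, form $N(X)=\bigl(\sum_{k} U_k\otimes X_k\bigr)\bigl({\rm Id}_k\otimes M_T(X)\bigr)$. Block-multiplying shows each $n\times n$ block of $N(X)$ is a fixed $K$-linear combination of the $X_{i,j,k}=X_kM_{i,j}(X)$, so setting $g(Z)=\det(N_Z)$ (same linear combinations of the $Z_{i,j,k}$) gives a $\GL_n$-conjugation invariant $g\in S(n,md^2)$ with $\zeta^*(g)(X)=\det(N(X))=f_U(X)\,\det(M_T(X))^k$. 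Since $f_T(A)=f_T(B)\neq 0$ forces $\det(M_T(A))=\det(M_T(B))\neq 0$, the extra factor is a common nonzero scalar on $A$ and $B$, and $\zeta^*(g)$ separates them because $f_U$ does. This is the missing step your phrase ``produce $g$ using the computation above'' was gesturing at; without it the localization claim is unsupported.
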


\begin{proof}
Assume that $R(n,m)_{dn}$ fails to separate $A$ and $B$. We will find $g \in S(n,md^2)$ such that $\zeta^*(g)$ separates $A$ and $B$.

Since both $A$ and $B$ cannot be in the null cone, we can assume without loss of generality that $A$ is not in the null cone. By Theorem~\ref{DM}, we have $T \in \Mat_{d,d}^m$, such that $f_T(A) \neq 0$. Now, since degree $dn$ invariants fail to separate $A$ and $B$, we must have $f_T(A) = f_T(B) \neq 0$. 

There exists $U \in \Mat_{dk,dk}^m$ such that $f_U(A) \neq f_U(B)$ since such invariants span $\nu_{dn}(R(n,m))$, which by assumption separates $A$ and $B$.  Now for $X \in \Mat_{n,n}^m$, define
$\mathcal{L}(X) := \sum_{k=1}^m U_k \otimes X_k$ and $\mathcal{R}(X) := {\rm Id_k} \otimes M_T(X)$. Let
$$
N(X) := \mathcal{L}(X) \mathcal{R}(X) = \left(\sum_{k=1}^m U_k \otimes X_k\right) \left( {\rm Id_k} \otimes M_T(X)  \right)
$$

Let us make some observations to help understand $N(X)$.
\begin{itemize}
\item The matrix $\mathcal{L}(X) = \sum_{k=1}^m U_k \otimes X_k$ can be seen as a $dk \times dk$ block matrix, where each block has size $n \times n$. Further, each block is a linear combination of the $X_k$'s. 

\item The matrix $\mathcal{R}(X) = {\rm Id_k} \otimes M_T(X)$ can be seen as a $k \times k$ block matrix, where the off diagonal blocks are $0$, and the diagonal blocks are a copy of $M_T(X)$. Observe further that $M_T(X)$ is a $d \times d$ block matrix, where each block $M_{i,j}$ is of size $n \times n$ as shown above. Hence, we can see $\mathcal{R}(X)$ as a $dk \times dk$ block matrix, where each block is of size $n \times n$ and is either $M_{i,j}$ or $0$.

\item A product of a block from $\mathcal{L}(X)$ and a block from $\mathcal{R}(X)$ yields a linear combination of terms of the form $X_k M_{i,j}$'s, i.e., a linear combination of the $X_{i,j,k}$'s. 

\item We can obtain $N(X)$ as a $dk \times dk$ block matrix by block multiplying $\mathcal{L}(X)$ and $\mathcal{R}(X)$. Hence, we see that each block of $N(X)$ is a linear combination of the $X_{i,j,k}$'s.

\end{itemize}

To summarize, $N(X)$ is a $dk \times dk$ block matrix and the size of each block is $n \times n$. Further, the $(p,q)^{th}$ block $N(X)_{p,q}$ is a linear combination $\sum_{i,j,k} \lambda_{p,q}^{i,j,k} X_{i,j,k}$ for some $\lambda_{p,q}^{i,j,k} \in K$. Now we can define an invariant $g \in S(n,md^2)$. For $Z = (Z_{i,j,k})_{i,j,k} \in \Mat_{n,n}^{md^2}$, we define $N_Z$ to be the $dk \times dk$ block matrix, where the $(p,q)^{th}$ block is given by $\sum_{i,j,k} \lambda_{p,q}^{i,j,k} Z_{i,j,k}$. Let $g(Z) = \det(N_Z)$. This is the required $g$. The point to note here is that by construction, we have $N_{\zeta(X)} = N(X)$. Thus $\zeta^*(g)(X) = g(\zeta(X)) = \det(N_{\zeta(X)}) = \det(N(X))$.  

There are two things we need to check. First that $g$ as defined is indeed invariant under simultaneous conjugation, and then that $\zeta^*(g)(X) = \det(N(X))$ does separate $A$ and $B$. 

The function $g$ is invariant under the simultaneous conjugation action of $\GL_n$ on $\Mat_{n,n}^{md^2}$ because it is given by the determinant of a block matrix whose blocks are linear combinations of matrices from the input $md^2$-tuple. 

Observe that $\det(\mathcal{L}(X)) = f_U(X)$ and $\det(\mathcal{R}(X)) = \det(M_T(X))^k$, hence $\det(N(X)) = f_U(X) \det(M_T(X))^k$. Recall that $f_T(X) = \det(L_T(X))$, and that $M_T(X) = \Ad(L_T(X))$. Now, since $f_T(A) = f_T(B) \neq 0$, we have that $\det(M_T(A)) = \det(M_T(B)) \neq 0$. 
In particular, since $f_U(A) \neq f_U(B)$, we have $\det(N(A)) \neq \det(N(B))$ as required.

Thus $\zeta^*(g)(A) = \det(N(A)) \neq \det(N(B)) = \zeta^*(g)$ showing that $\zeta^*(g)$ indeed separates $A$ and $B$.
\end{proof}

Now, we can finally prove Theorem~\ref{sep.red}.

\begin{proof} [Proof of Theorem~\ref{sep.red}]
Suppose $A,B \in \Mat_{n,n}^m$ with $A \not\sim_{LR} B$. By Lemma~\ref{penultimate}, for at least one choice of $d \in \{n-1,n\}$, we have that $\nu_{dn}(R(n,m))$ separates $A$ and $B$. Fix this $d$. By Lemma~\ref{last}, either $R(n,m)_{dn}$ or $\zeta^*(S(n,md^2))$ separates $A$ and $B$ . In the former case, we have an invariant of degree $dn \leq n^2$ that separates $A$ and $B$. In the latter case, $\zeta^*(S(n,md^2))$ separates $A$ and $B$ which implies that $S(n,md^2)$ separates $\zeta(A)$ and $\zeta(B)$. Hence, we have an invariant $g \in S(n,md^2)$ of degree $\leq \beta_{\sep}(S(n,md^2))$ such that $g(\zeta(A)) \neq g(\zeta(B))$.

Now, since $\zeta$ is a map of degree $dn$, we have $\zeta^*(g) \in R(n,m)$ is a polynomial of degree  $\deg(g)dn \leq n^2 \beta_{\sep} (S(n,md^2)) \leq n^2\beta_{\sep}(S(n,mn^2))$ that separates $A$ and $B$.
\end{proof}

\begin{remark}
It is easy to see from Theorem~\ref{donkin-gen} that the statement of Theorem~\ref{Pro} holds if we assume $\kar (K)>n$ (see also \cite{Zubkov}). Hence, the statements in Theorem~\ref{sep.bound.mi} and Corollary~\ref{sep.bound.msi} that assumed $\kar (K) = 0$ also hold under the assumption that $\kar (K) > n$.
\end{remark}

\subsection*{Acknowledgements} 
We thank the authors of \cite{ZGLOW} for sending us an early version of their paper. We thank G\'abor Ivanyos and Gregor Kemper for providing useful references. Finally, we thank the anonymous referee for several useful suggestions on improving the exposition.

\ \\[20pt]
\noindent{\sl Harm Derksen\\
Department of Mathematics\\
University of Michigan\\
530 Church Street\\
Ann Arbor, MI 48109-1043, USA\\
{\tt hderksen@umich.edu}}

\ \\[20pt]
\noindent{\sl Visu Makam\\
School of Mathematics\\
Institute for Advanced Study\\
1 Einstein Dr\\
Princeton, NJ 08540, USA\\
{\tt visu@umich.edu}}

   \end{document}